\documentclass[12pt, reqno]{amsart}
\usepackage{graphicx} 
\usepackage[a4paper, margin=0.95in]{geometry}  
\usepackage{amsfonts,amsmath,amsthm} 
\usepackage{amssymb}
\usepackage{tikz}
\usetikzlibrary{cd}
\usepackage[all]{xy}
\usetikzlibrary{arrows,shapes}
\usepackage{bbm}
\usepackage[utf8]{inputenc} 
\usepackage[T1]{fontenc}    
 \usepackage{media9}

\usepackage{caption}
\usepackage{subcaption}

\usepackage{sidecap}

\usepackage{booktabs}

\captionsetup{width=\textwidth}   

\usepackage[labelfont=bf, font=small]{caption}
\usepackage[subrefformat=parens, labelformat=parens]{subcaption}

\usepackage{algorithm}
\usepackage{algorithmic}

\usepackage{enumitem}
\setlist[itemize]{leftmargin=20pt}
\setlist[enumerate]{leftmargin=20pt}

\usepackage{cancel} 

\usepackage{graphicx}
\usepackage{wrapfig}

\usepackage[colorlinks=true, linkcolor=blue, citecolor=blue, urlcolor=blue]{hyperref}

\usepackage{float}

\usepackage{mathtools}

\newtheorem{theorem}{Theorem}[section]
\newtheorem{proposition}[theorem]{Proposition}
\newtheorem{lemma}[theorem]{Lemma}
\newtheorem{definition}[theorem]{Definition}
\newtheorem{corollary}[theorem]{Corollary}
\newtheorem{example}[theorem]{Example}

\newtheorem{remark}[theorem]{Remark}

\newcommand{\R}{\mathbb{R}}
\newcommand{\N}{\mathbb{N}}

\newcommand{\p}{\mathrm{p}}
\newcommand{\q}{\mathrm{q}}

\DeclareMathOperator*{\argmin}{arg\,min}

\usepackage[normalem]{ulem} 

\usepackage{lineno}

\author[R. Díaz Martín]{Rocío Díaz Martín}
\address{Rocío Díaz Martín, Florida State University, Department of Mathematics, Tallahassee, FL 32306, USA}
\email{rdiazmartin@fsu.edu}

\author[I. Medri]{Ivan V. Medri}
\address{Ivan V. Medri, University of Virginia, Department of Biomedical Engineering, Charlottesville, VA 22904, USA}
\email{pzr7pr@virginia.edu}

\author[J. Murphy]{James M. Murphy}
\address{James M. Murphy, Tufts University, Department of Mathematics, Medford, MA 02155, USA}
\email{jm.murphy@tufts.edu}

\begin{document}

\title{Gromov-Wasserstein Barycenters: The Analysis Problem}

\keywords{Barycentric Coding Model, Gromov-Wasserstein Distance, Harmonic Analysis, Optimal Transport, Signal Processing. }

\begin{abstract}
    This paper considers the problem of estimating a matrix that encodes pairwise distances in a finite metric space--or, more generally, the edge weight matrix of a network--under the barycentric coding model (BCM) with respect to the Gromov-Wasserstein (GW) distance function. We frame this task as estimating the unknown barycentric coordinates with respect to the GW distance, assuming that the target matrix (or kernel) belongs to the set of GW barycenters of a finite collection of known templates.    
    In the language of harmonic analysis, 
    if computing GW barycenters can be viewed as a synthesis problem, this paper aims to solve the corresponding analysis problem.
    We propose two methods: one utilizing fixed-point iteration for computing GW barycenters, and another employing a differentiation-based approach to the GW structure using a blow-up technique.  Finally, we demonstrate the application of the proposed GW analysis approach in a series of numerical experiments and applications to machine learning.
\end{abstract}

\maketitle

\section{Introduction}

The \emph{barycentric coding model} refers to a {data representation framework that leverages the concept of expressing points in a convex set as barycentric combinations of its vertices. This approach is widely used in geometry, signal processing, and machine learning (ML), where data 
are represented via coordinates relative to specific reference points--sometimes called anchors, landmarks, or templates--rather than in a traditional Cartesian coordinate system or in terms of an algebraic basis of vectors \cite{coxeter1961introduction}.
While these notions--which include principal component analysis \cite{pearson1901liii}, nonnegative matrix factorization \cite{lee2000algorithms}, and archetypal analysis \cite{cutler1994archetypal}--are most natural in a vector space, they can be adapted to metric spaces via notions of \emph{barycenter or Fr\'echet mean}, which do not always coincide with the convex combinations of some vertices of the set. The applications of modeling with barycentric coordinates range from data compression--where barycentric coding reduces dimensionality or redundancy by encoding data in terms of a few reference points (particularly in structured datasets)--to generative modeling, clustering, and embedding in ML--where data points are represented relative to cluster centroids--among other uses \cite{zhang2014local, hormann2017generalized, warren2007barycentric, DeRoseMeyer2006,tovsic2011dictionary}.   

These models allow for  \emph{synthesis} and \emph{analysis} of points in a metric space with distance $dist(\cdot,\cdot)$, through
variational problems relative to given reference points $\mathbf{x}^1, \dots, \mathbf{x}^S$. For the \emph{synthesis problem}, given a vector of non-negative coefficients $\lambda = (\lambda_1, \dots, \lambda_S)$ in the $(S-1)$-dimensional simplex $\Delta_{S-1}$, one seeks  a point in the space minimizing the  following objective:

\begin{equation}\label{eq: syn}
    \mathbf{x}_\lambda \in \argmin_{\mathbf{x}} 
    \sum_{s=1}^S \lambda_s \, dist^2(\mathbf{x}^s, \mathbf{x});
\end{equation}
where powers $p\ge 1$ other than 2 can also be considered. One can also `project' a point $\mathbf{y}$ onto the collection of such synthesized points $\{\mathbf x_{\lambda}\}_{\lambda\in\Delta_{S-1}}$ by solving the \emph{analysis problem}:

\begin{equation*}
    \lambda_{\mathbf{y}} \in \argmin_{\lambda \in \Delta_{S-1}} d_a^2(\mathbf{y}, \mathbf{x}_\lambda),
\end{equation*}
where $d_a(\cdot,\cdot)$ is a discrepancy function (e.g., $d_a = dist$ or another metric in the space or even a weaker notion of dissimilarity that still identifies indiscernibles). We refer to the collection of points generated via \eqref{eq: syn} for fixed templates $\{\mathbf{x}^s\}_{s=1}^S$ as a \emph{barycentric coding model} (BCM). Consequently, the barycenter of a set of templates for given weights $\lambda$ provides an alternative way of combining elements, one that better reflects the underlying geometry of the space while still preserving the dimensionality reduction properties of the representation.  This framework has been extensively used in classical optimal transport (OT), where the space of probability measures is endowed with the Wasserstein distance \cite{Santambrogio-OTAM,Villani2003Topics, Villani2009Optimal}. See Section \ref{sec: OT} in Suppl. Mat.  Foundational works on Wasserstein barycenters include \cite{agueh2011barycenters, cuturi2014fast, bonneel2016wasserstein, le2017existence, zemel2019frechet}, with more recent developments such as \cite{werenski2022measure, gunsilius2024tangential, mallery2025synthesis, werenski2025linearized} and extensions to dictionary learning \cite{schmitz2018wasserstein, mueller2023geometrically}. 

While classical OT relies on a shared underlying domain to compare probability measures, many modern applications involve distributions supported on different, potentially non-Euclidean, metric spaces. In such settings, Gromov-Wasserstein (GW) theory offers a natural generalization, enabling meaningful comparisons by aligning the intrinsic geometries of the underlying spaces themselves. The GW distance \cite{memoli2011gromov, memoli2004comparing} has emerged as a powerful tool in object matching problems, as it compares probability measures across different metric spaces by jointly aligning both structural (edge-based) and nodal (feature-based) similarities \cite{hendrikson2016using}. This perspective builds upon classical notions of shape similarity--such as the Hausdorff distance, the Wasserstein distance, and the Hausdorff-Wasserstein distance \cite{memoli2011gromov}--and can be viewed as a measure-theoretic relaxation of the Gromov-Hausdorff distance introduced in \cite{gromov2001metric}. 

These innovations have led to applications across diverse domains, ranging from quantum chemistry \cite{peyre2016gromov} to natural language processing \cite{alvarez2018gromov}, enhancing fundamental tasks such as clustering \cite{Chowdhury2021}, dimensionality reduction \cite{van2024distributional}, shape correspondence \cite{Kong2024}, shape analysis \cite{memoli2022distance}, computer vision \cite{schmitzer2013modelling}, and inference-based simulation \cite{hur2024reversible}.
Extensions to unbalanced settings have been explored \cite{bai2024efficient, chapel2020partial, Liu2020, sejourne2021unbalanced, zhang2022cycle}, along with other variants, such as Sliced GW \cite{titouan2019sliced}, and Fused GW--a simultaneous extension of Wasserstein and GW distances--\cite{vayer2020fused, vayer2020contribution, thual2022aligning}.
However, 
GW distance computation is challenging because it requires solving a non-convex quadratic program; approximations include linearized \cite{beier2022linear} and entropy-regularized  \cite{peyre2016gromov, solomon2016entropic, rioux2024entropic} GW.

In the context of GW, data consist of different configurations of nodes with weighted edges. Thus, instead of working with measures $\mu$ defined on a common underlying space, as in classical OT, we consider objects of the form $\mathbb X ={(X, \omega_{X}, \mu_{X})}$, where $(X, \mu_{X})$ is a probability space and $\omega_{X}: X \times X \to \mathbb{R}$ is a $\mu_{X} \otimes \mu_{X}$-measurable function. The function $\omega_{X}$ may represent a metric on $X$, in which case $\mathbb X$ is referred to as a \emph{metric measure space} (mm-space). More generally, $\omega_{X}$ can be any function encoding \emph{edge weights}, in which case the 3-tuple is called a \emph{network} or a \emph{shape}. If the set $X$ consists of finitely many points or \emph{nodes}, then $\omega_X$ and $\mu_X$ admit matrix and vector representations, denoted by $\mathbf{X}$ and  $\p$, respectively. For short, we will sometimes use $(\mathbf X, \p)$ to encode ${(X, \omega_{X}, \mu_{X})}$. We refer to the space of networks endowed with the GW metric (see Section \ref{sec: background}) as  \emph{GW space}.

Using these objects, the \emph{GW synthesis problem} takes the form of \eqref{eq: syn} with the distance $dist(\cdot,\cdot)$ replaced by the GW metric:  given  templates $\{\mathbb X^s\}_{s=1}^S$, the goal is to solve

\begin{equation}\label{eq: syn gw}
     \Delta_{S-1} \ni \lambda \longmapsto \mathbb{Y}_\lambda\in\argmin_{\mathbb Y}\sum_{s=1}^S\lambda_s \, GW^2(\mathbb X^s,\mathbb Y).
\end{equation}
We call the collection generated by \eqref{eq: syn gw}, for fixed templates and varying coefficients $\lambda \in \Delta_{S-1}$,  a \emph{GW barycenter space} or  \emph{Gromov-Wasserstein barycentric coding model} (GW-BCM).

In this article, we focus on estimating the unknown GW barycentric coordinates of a point (or, more precisely, a network) under the barycentric coding model within the Gromov-Wasserstein framework (GW-BCM), a task we refer to as the \emph{GW analysis problem}.
That is, given a network $\mathbb Y$ we aim to formalize and solve

\begin{equation}\label{eq: analysis gw}
    \lambda_{\mathbb{Y}} \in \argmin_{\lambda \in \Delta_{S-1}} d_a^2(\mathbb{Y}, \mathbb{Y}_\lambda),
\end{equation}
where $\mathbb{Y}_\lambda$ is generated by the GW-BCM \eqref{eq: syn gw}, 
for some appropriate dissimilarity metric $d_a(\cdot, \cdot)$.
In other words, assuming a given network is a GW barycenter of a set of templates $\{\mathbb X^s\}_{s=1}^S$, our goal is to recover the corresponding vector of weights $\lambda$. See Fig. \ref{fig: GW bary space} for an illustration. 

\begin{figure}[ht!]
    \centering

    \includegraphics[width=1\linewidth]{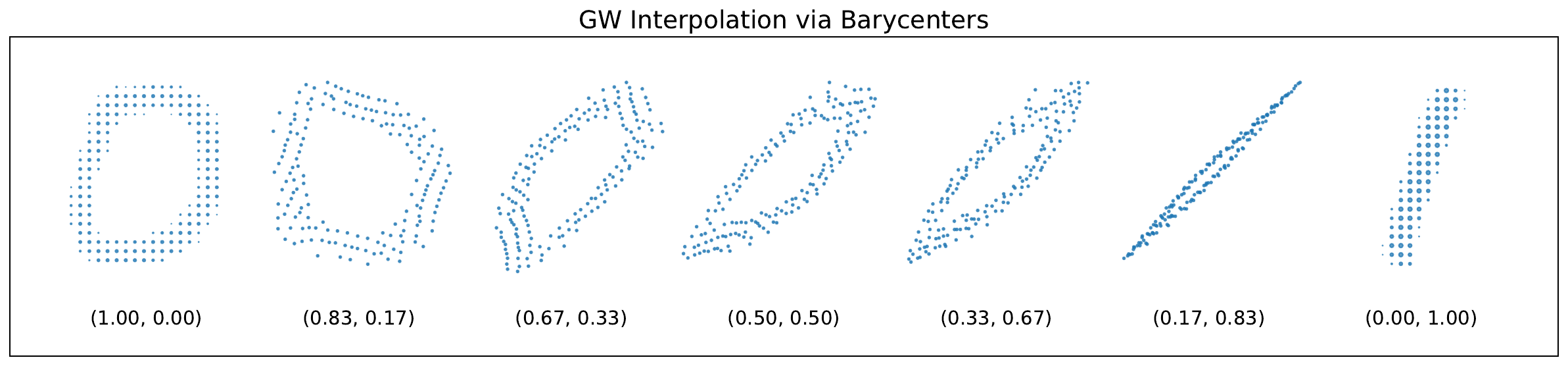}

    \includegraphics[width=1\linewidth]{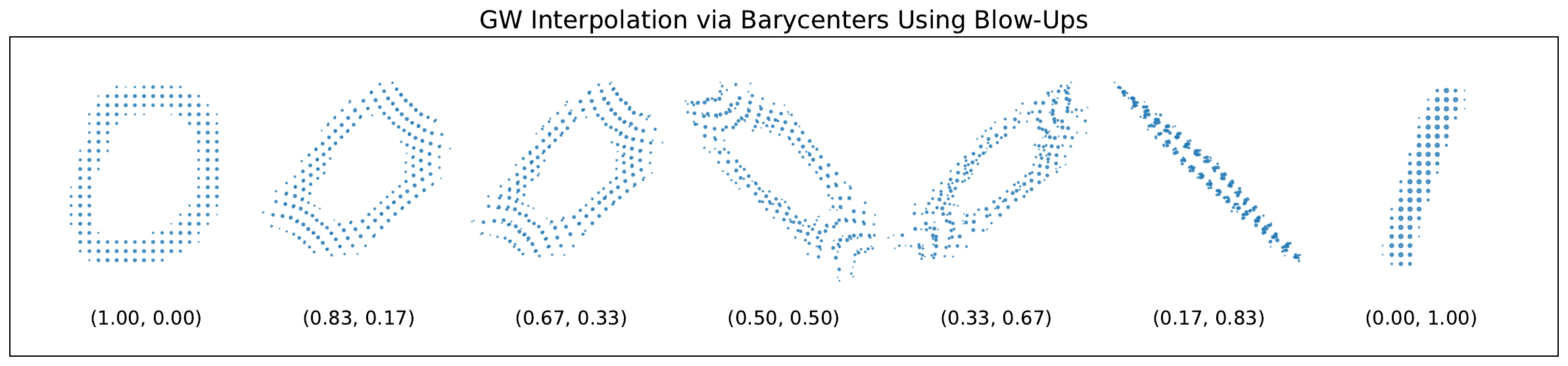}

    \caption{\small{Illustration of GW barycenters  between two point clouds from the Point Cloud MNIST 2D dataset \cite{Garcia2023PointCloudMNIST2D}, shown  for various $t \in [0,1]$ with interpolation coordinates $(1-t, t)$ on the horizontal axis. The top subplot uses the function \texttt{ot.gromov.gromov\_barycenters} from the POT Library \cite{flamary2021pot} (based on \cite{peyre2016gromov}).  The bottom subplot uses the blow-up technique from \cite{chowdhury2019gromov}, which appropriately realigns the nodes and enlarges the template matrices $\mathbf{X}^0$ (zero shape) and $\mathbf{X}^1$ (one shape), creating new versions $\mathbf{X}^0_b$ and $\mathbf{X}^1_b$ of the same size, so that the GW barycenters can be interpreted as convex combinations $(1-t)\mathbf{X}^0_b + t\mathbf{X}^1_b$ (see Remark \ref{rem: geod} in Suppl. Mat. \ref{app: weak}). MDS embedding is used for visualization.}}
    \label{fig: GW bary space}

\end{figure} 

In this work, the synthesis problem--constructing GW barycenters--is addressed primarily to support the formulation of the GW-BCM assumption and the corresponding analysis problem. 
Since the minimization in \eqref{eq: syn gw} ranges over a very large \emph{space of spaces} $\mathbb Y$, we restrict to spaces with finite many nodes for tractability.
We further consider simplified versions of the general GW barycenter space generated by \eqref{eq: syn gw} (see also \eqref{eq: bary space very general}).
In the first simplification (\eqref{eq: bary space for M and q fixed} in Section \ref{sec: fixed point approach}), inspired by \cite{peyre2016gromov}, we fix the cardinality of the set $Y$ and the measure $\mu_Y$ in $\mathbb{Y} = (Y, \omega_Y, \mu_Y)$. In the second (\eqref{eq: bary bu} in Section \ref{sec: analysis grad}), we focus on critical points of the objective in \eqref{eq: syn gw} as in \cite{chowdhury2020gromov}. Relationships are discussed in Section \ref{sec: relation}.

From a machine learning perspective, a substantial literature has developed on the use of the GW distance for graph representation. In particular, the works \cite{B-xu2020gromov, A-xu2022representing} introduce GW factorization models in which each graph is encoded as a barycentric combination of learned graph atoms, yielding permutation-invariant embeddings. Related extensions and applications can be found in \cite{C-zhou2025fedgf}.
In these approaches, the primary goal is to design neural network architectures that perform GW dictionary learning, so that the atoms (references or templates) are learned jointly with the associated weights. Consequently, the emphasis lies on the latent representation space and efficient learning algorithms.  In supervised graph prediction \cite{E-brogat2022learning}, fused-GW barycenters models are used. A closely related line of work based on an online dictionary-learning framework \cite{D-vincent2021online} similarly trains a dictionary of GW and fused-GW atoms and embeds graphs in the corresponding atom-weight space by using a linear approach. Indeed, the model is a relaxation used for computational tractability: each data point $\mathbb Y$ is approximated as a convex combination of learned atoms with respect to GW distance ($\min_{\lambda\in \Delta_{S-1}} GW(\mathbb Y, \sum_{s=1}^S \lambda_s \mathbb X_s)$), where the authors explicitly impose the condition that the dictionary consists of pairwise relation matrices of the same size, but meanwhile, each dataset graph $\mathbb Y$ can have its own size.    
These frameworks consider an empirical form or, more precisely, a deep learning form of the analysis problem in the latent space, which is in contrast to our geometric and analytic approaches that pertain to the $\lambda$ rather than a latent representation thereof and in particular provide exact expressions for recovering $\lambda$ in certain cases. 
On the one hand, our framework is simpler in that the templates are fixed in advance; on the other hand, our aim is to characterize the analysis problem (barycentric weights recovery) through closed-form expressions.
Our methodology is closer to that of \cite{bonneel2016wasserstein}, but with the GW geometry replacing the Wasserstein one. In their setting, the atoms (histograms) are fixed a priori, and the barycentric weights are obtained by solving OT-based optimization problems, rather than via learned encoders. Regression is then performed in the barycentric coordinate space through classical optimization, not by backpropagation through a deep model.

\subsection{Summary of Contributions}
We primarily consider finite spaces and propose two distinct approaches to tackle the GW analysis problem \eqref{eq: analysis gw}: 

\begin{enumerate}[leftmargin=*] 
    \item \, A method developed from a fixed-point scheme for the GW synthesis problem \eqref{eq: syn gw}, relying on matrix optimization over the networks' weight matrices $\mathbf{Y}$.

    \item \, A gradient-based method motivated interpreting GW barycenters not necessarily as exact minimizers of \eqref{eq: syn gw}, but rather as critical points of a functional on the GW space.

\end{enumerate}

 Regarding (1), we adopt the following simple idea, which can be applied beyond the context of GW.  
Consider a finite set of templates $\mathbb X^s=(\mathbf{X}^s,\p^s)$, $s=1,\dots, S$. Given a weight vector $\lambda\in \Delta_{S-1}$, consider GW synthesis problem \eqref{eq: syn gw}, where the minimization is performed over $\mathbb Y=(\mathbf Y, \q)$, with fixed matrix size $M$ for $\mathbf{Y}$ and a fixed probability vector $\q$ \cite{peyre2016gromov}. 
Assume that the GW synthesis problem \eqref{eq: syn gw} can be solved via a fixed-point iteration of the form $\mathbf Y_{n}=\rho_\lambda(\mathbf{Y}_{n-1})$.
Then, if $(\mathbf{Y}, \q)$ lies in the GW barycenter space generated by the templates, the matrix $\mathbf{Y}$ must be a fixed point of the corresponding iterative scheme. That is, there exists an unknown `coordinate' vector  $ \lambda_{\mathbf{Y}}$ such that  $\mathbf{Y} = \rho_{\lambda_{\mathbf{Y}}}(\mathbf{Y})$. To recover $\lambda_{\mathbf{Y}}$, we differentiate a chosen matrix norm $\|\mathbf{Y} - \rho_{\lambda}(\mathbf{Y})\|$ with respect to $ \lambda $ and find its minimum at $\lambda=\lambda_{\mathbf Y}$.  
In summary, to generate GW barycenters, we employ a simplified version of the iterative scheme proposed in \cite{peyre2016gromov}, and show in Theorem \ref{prop: formula_bary} that elements of the barycenter space are fixed points of this iteration. Building on this, Theorem \ref{thm: analysis} provides an exact recovery method for $\lambda_{\mathbf{Y}}$, for any $\mathbf{Y}$ in the GW barycenter space, formulated as a convex quadratic program and implemented in Algorithm \ref{alg: analysis}. See Fig. \ref{fig: baryspace01} for an illustration of its applicability.  

Regarding (2), we interpret GW barycenters $\mathbb{Y}_\lambda$ as \emph{Karcher means} of the template set $\{\mathbb{X}^s\}_{s=1}^S$ with respect to the weights $\lambda$, in the sense that $\mathbb{Y}_\lambda$ is a critical point of the objective functional in the GW synthesis problem \eqref{eq: syn gw}--that is, it satisfies the first-order condition \cite{karcher1977riemannian}.
To define a notion of gradients in GW space, we leverage a `Riemannian-like structure', that endows the space of networks equipped with the GW distance with a tangent space framework \cite{chowdhury2019gromov, chowdhury2020gromov, sturm2023space}. 
To this end, we employ the \emph{blow-up} technique introduced in \cite{chowdhury2020gromov} in the context of finite spaces. This approach enables the comparison of networks $\mathbb X=(\mathbf X,\p)$ and $\mathbb Y=(\mathbf Y,\q)$ of different sizes (i.e., where $\mathbf X$ is an $N\times N$ matrix and $\mathbf Y$ is an $M\times M$ matrix potentially with  $M\neq N$) by converting GW OT plans into maps between new networks $\mathbb X_b=(\mathbf X_b,\p_b)$ and $\mathbb Y_b = (\mathbf Y_b,\q_b)$ of the same size. These new networks are \emph{weakly isomorphic} (see Definition \ref{def: weak iso}) to the original ones, and thus the GW dissimilarity is preserved: $GW(\mathbb X, \mathbb Y) = GW(\mathbb X_b,\mathbb Y_b).$  After establishing how to define directional derivatives and gradients in GW space, and drawing inspiration from \cite{werenski2022measure, werenski2025linearized}, we solve the GW analysis problem \eqref{eq: analysis gw} as another convex quadratic program; see Theorem \ref{prop: gradient method} and the corresponding implementation in Algorithm \ref{alg: analysis blow up}. 

\begin{figure}[ht!]
    \centering
    \includegraphics[width=0.8\linewidth]{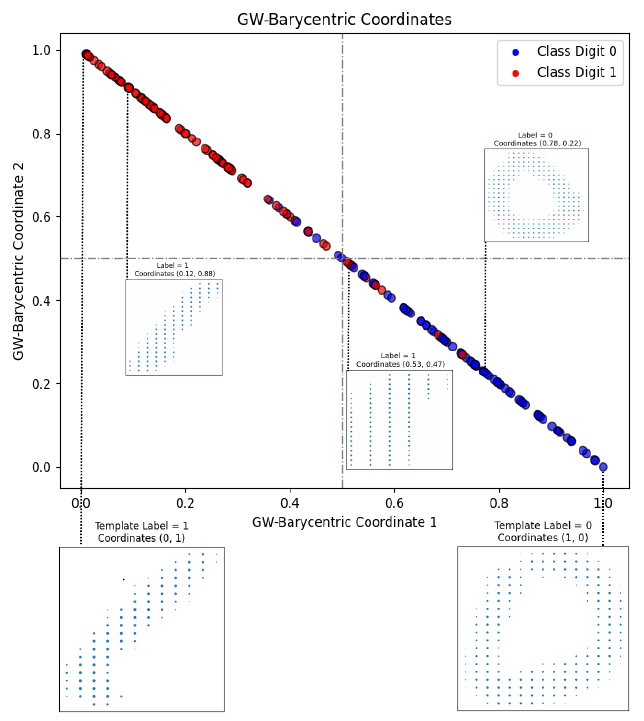}

    \caption{\small{Visualization of the GW barycenter space as GW barycenter coordinates, using the Point Cloud MNIST 2D dataset \cite{Garcia2023PointCloudMNIST2D}. Two point clouds serve as templates: one for digit 0s at $(1,0)$ and one for digit 1s at $(0,1)$. Using our proposed Algorithm \ref{alg: analysis}, we compute GW barycenter coordinates for 200 random samples and plot them with blue dots for label 0 and red dots for label 1. Additionally, three corresponding point clouds are shown for illustration. Point clouds clearly corresponding to a digit 0 lie closer to the 0 template (coordinates near (1,0)), while those representing digit 1 are closer to the 1 template (coordinates near (0,1)).}}
    \label{fig: baryspace01}
\end{figure}

\subsection{Organization of the Paper} Section \ref{sec: background} reviews the GW distance. Our main results, their proofs, and new algorithms are presented in Sections \ref{sec: fixed point approach} and \ref{sec: analysis grad}, which address the GW analysis problem via two approaches: a fixed-point method (Section \ref{sec: fixed point approach}) and a gradient-based method (Section \ref{sec: analysis grad}).
Section \ref{sec: experiments} presents experiments on 2D and 3D point clouds that serve as a proof-of-concept for the analytical results. Conclusions follow in
Section \ref{sec: conclusions}.  Supplementary material includes: additional context on the BCM framework in Section \ref{sec: BCM}; background proofs and examples in Suppl. Mat. \ref{app: generalities}; computational complexity of the proposed GW analysis algorithms in Suppl. Mat. \ref{app: comp comp}; further experimental results in Suppl. Mat. \ref{app: experiments}; experimental exploration beyond the GW-BCM hypothesis (i.e., for targets outside the GW barycenter space) \ref{sec supp: projection}; and an additional discussion on the computation of GW  and  $W_2$ barycenters in Suppl. Mat. \ref{app: GW Was}.
The code to reproduce the experiments of this paper is available
online at \url{https://github.com/RocioDM/GW_BCM}.

\subsection{Notation} 

In general, superscripts will be used for the templates and subscripts for the synthesized objects.  We denote by $[S]:=\{1,\dots,S\}$ the set of indices from $1$ to $S$. The space of \emph{weights} $\Delta_{S-1}$, also known as a \emph{standard simplex} or \emph{probability simplex}, is $\Delta_{S-1}:=\{\lambda=(\lambda_1,\dots,\lambda_S)\in \mathbb R^S\mid \lambda_s\geq 0 \,\,\,\, \forall s\in[S], \text{ and } \sum_{s\in [S]}\lambda_s=1\}.$  If $\p$ is a vector and $\mathbf{X}$ a matrix, when there is no ambiguity regarding whether we are referring to entries or coordinates, we will write $\p[i] = \p_i$ and $\mathbf{X}[i,j] = \mathbf{X}_{ij}$. 
 We recall that the \emph{Frobenius norm} of a real-valued square matrix $\mathbf{X}$ is defined as $\| \mathbf X\|_{\mathrm{Frob}}:=\sqrt{\mathrm{tr}( \mathbf X^T  \mathbf X)}$, where $\mathbf X^T$ stands for transpose.  The component-wise (Hadamard) product between matrices is denoted by $ (\mathbf X\odot  \mathbf Y)_{jl}:=\mathbf X_{jl} \mathbf Y_{jl}.$  For a vector $\q$, the matrix $\frac{1}{\q\q^T}$ is defined by $\left(\frac{1}{\q\q^T}\right)[{j,l}]:=\frac{1}{\q_j\q_l}.$

Given two (Borel) measure spaces $(X,\mu_X)$ and $(Y,\mu_Y)$, we say that the measure $\mu_Y$  is the \emph{pushforward} of the measure $\mu_X$ by a measurable map $\varphi : X \to Y$, denoted by $\varphi_\#\mu_X = \mu_Y$, if $\mu_Y(A) = \mu_X(\{x \in X \mid \varphi(x) \in A\})$ for every measurable set $A \subseteq Y$. 
Given $\omega:Y\times Y\to \R$, we  use the \emph{pullback} notation $\varphi^*\omega(x,x') := \omega(\varphi(x),\varphi(x'))$, for all $x,x'\in X$.  A \emph{coupling} or \emph{transport plan} $\pi$ between $\mu_X$ and $\mu_Y$, denoted $\pi\in \Pi(\mu,\nu)$, is a probability measure on the product space $X\times Y$ with marginals $\mu_X$, $\mu_Y$. In the case of finite discrete measures,  $\mu_X=\sum_{i=1}^N\p_i\delta_{x_i}$ and $\mu_Y=\sum_{j=1}^M\q_j\delta_{y_j}$, each coupling can be identified with a matrix in $\Pi(\p,\q):=\{\pi\in\R^{N\times M}\mid \, \pi_{ij}\geq 0, \, \forall i,j \, ; \,  \sum_{j\in[M]}{\pi_{ij}}=\p_i \, \forall i \, \text{ and } \sum_{i\in [N]} \pi_{ij}=\q_j \, \forall j\}.$

\section{Gromov-Wasserstein (GW) Distances}\label{sec: background}\label{sec: intro GW}

The Gromov-Wasserstein (GW) distance endows the space of metric measure spaces with a well-defined metric structure. Moreover, it serves as a discrepancy  measure for comparing more general structures. 

\begin{definition}[\cite{memoli2011gromov,beier2022linear, sturm2023space, chowdhury2019gromov}]\label{def: mm gm net}
    Let $(X, \mu_X)$ be a probability space, where $X$ is a Polish space\footnote{(i.e., a separable and completely metrizable topological space equipped with the Borel $\sigma$-algebra)}, and $\mu_X$ is a fully supported Borel probability measure on $X$.
    \begin{enumerate}[leftmargin=*]
        \item If the set $X$ is endowed with a metric $d_X: X \times X \to \mathbb{R}_{\geq 0}$, then the triple $\mathbb X=(X, d_X, \mu_X)$ is called a \emph{metric measure space} (mm-space).

        \item If $g_X$ is a symmetric function in $L^2(X \times X, \mu_X \otimes \mu_X)$ (i.e., $g_X(x, x') = g_X(x', x)$), then the triple $\mathbb X=(X, g_X, \mu_X)$ is called a \emph{gauge measure space} (gm-space).\footnote{For example, one may take squared-distances $g_X(x, x') = d_X^2(x, x')$.}

        \item If $\omega_X: X \times X \to \mathbb{R}$ is any square-integrable function, i.e., $\omega_X \in L^2(X \times X, \mu_X \otimes \mu_X)$, then the triple $\mathbb{X} = (X, \omega_X, \mu_X)$ is called a \emph{network} (or \emph{shape}).
    \end{enumerate}    
\end{definition}

The natural hierarchy among the above structures can be expressed by the following chain of inclusions: $\text{mm-spaces}\subset \text{gm-spaces}\subset\text{networks}$.

\begin{definition}[\cite{memoli2011gromov, chowdhury2019gromov}] Let $1\leq p<\infty$.
 The $p$-\emph{Gromov-Wasserstein problem} between two networks $\mathbb X =(X, \omega_X,\mu_X)$ and $\mathbb Y=(Y, \omega_Y,\mu_Y)$ is formulated as\footnote{(for simplicity, assume $X$ and $Y$ to be compact, so that \eqref{eq: gw for networks} takes a finite value)}:

    \begin{equation}\label{eq: gw for networks}
    GW(\mathbb X, \mathbb Y)=\inf_{\pi\in \Pi(\mu_X,\mu_Y)}{\left(\int_{X\times Y}\int_{X\times Y}|\omega_X(x,x')-\omega_Y(y,y')|^p d\pi(x,y)d\pi(x',y')\right)^{\frac{1}{p}}}.
\end{equation}
\end{definition}

Roughly speaking, if $\pi(x,y)$ encodes the mass transported from location $x\in X$ to the point $y\in Y$, we can interpret that \eqref{eq: gw for networks} optimizes over all possible plans $\pi$ so that when transporting mass
from the points $x$ and $x'$ to $y$ and $y'$, the internal weights/dissimilarities $\omega_X(x,x')$ and $\omega_Y(y,y')$ are preserved or are as close as possible. The typical choice is $p=2$, and for simplicity, we will assume this case. Accordingly, we denote $GW_2(\cdot,\cdot)$ simply as $GW(\cdot,\cdot)$.
More general loss functions beyond $|\cdot|^p$, such as the KL-divergence \cite{peyre2016gromov} or the inner product GW problem \cite{zhang2024gradient}, have also been considered in the literature.

\begin{definition}[\cite{chowdhury2019gromov}]\label{def: strong iso}\label{def: weak iso}
 We say that two mm-spaces $\mathbb X=(X,d_X,\mu_X)$,  $\mathbb Y=(Y,d_Y,\mu_Y)$ are \emph{strongly isomorphic}, denoted $\mathbb{X} \sim \mathbb{Y}$, if there exists a Borel measurable surjective isometry $\varphi: X \to Y$ ($d_X(x,x') = d_Y(\varphi(x),\varphi(x'))$) with Borel measurable inverse such that
$\varphi_\# \mu_X = \mu_Y$.  We say that two networks $\mathbb{X} = (X,\omega_X,\mu_X)$, $\mathbb{Y} = (Y,\omega_Y,\mu_Y)$ are \emph{weakly isomorphic}, denoted $\mathbb{X} \sim^w \mathbb{Y}$, if there exists a Borel probability space $(Z, \mu_Z)$ and measurable maps $\varphi_X: Z \to X$ and $\varphi_Y: Z \to Y$ such that  $(\varphi_X)_\# \mu_Z = \mu_X$,  $(\varphi_Y)_\# \mu_Z = \mu_Y$, and $\| (\varphi_X)^* \omega_X - (\varphi_Y)^* \omega_Y \|_\infty = 0$.

\begin{small}
\begin{equation*}
  \xymatrix{
&(Z,\mu_Z) \ar[rd]^{\varphi_Y} \ar[ld]_{\varphi_X}&\\
(X,\omega_X,\mu_X) \ar@{<-->}[rr]^{\sim^w}   &  & (Y,\omega_Y,\mu_Y) } 
\end{equation*}    
\end{small}
\end{definition}

\noindent For a simple visualization of weak isomorphic networks, see Figure \ref{fig: weakiso} (and also Figure \ref{fig: weakiso_2} described by Example \ref{example: weakiso} in Suppl. Mat. \ref{app: generalities}).
It is clear that if two networks are strongly isomorphic, then they are also weakly isomorphic. Moreover, if two networks are weakly isomorphic and have the structure of mm-spaces, then they are strongly isomorphic \cite{memoli2011gromov,chowdhury2017distances} (see Proposition \ref{prop: weak+ metric structure implies strong} in Suppl. Mat. \ref{app: weak} for the finite case).

\begin{SCfigure}[5][ht!]
  \centering
  \includegraphics[width=0.22\textwidth]{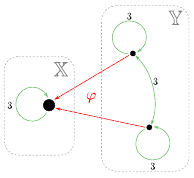} 
  \caption{\small{Illustration of weakly isomorphic networks $\mathbb X$ and $\mathbb Y$.  The network $\mathbb X$ consists of one point (i.e., $X$ is a singleton) with unit mass (i.e., $\mu_X$ is the delta-measure),  and $\omega_X$ represented by the $1\times 1$ matrix $\mathbf{X}=(3)$. The network $\mathbb Y$ is given by the probability space $(Y,\mu_Y)$ consisting of two points with equal mass, and $\omega_Y$ represented by the $2\times 2$ matrix $\mathbf{Y}=\left(\begin{smallmatrix}
3 & 3 \\
3 & 3
\end{smallmatrix}\right)$. The pivoting space $(Z,\mu_Z)$ in Definition \ref{def: weak iso} can be taken as $(Y,\mu_Y)$, with $\varphi_X=\varphi$ (red assignment) and $\varphi_Y$ the identity map.
     }}
  \label{fig: weakiso}
\end{SCfigure}

Consider the equivalence relation on the space of networks given by $\mathbb X\sim \mathbb X'$ if and only if $GW(\mathbb X, \mathbb X')=0.$  Then, $GW(\cdot,\cdot)$  defines a metric on the quotient space of  mm-spaces under this equivalence relation. Moreover, this equivalence relation coincides with the notion of \emph{strong isomorphism} \cite[Thm. 5.1]{memoli2011gromov}.
The extension of the GW discrepancy from mm-spaces to general networks is not necessarily a metric modulo strong isomorphisms \cite[Rem. 2.5 and Fig. 2]{chowdhury2019gromov}. However, in analogy to Gromov-Hausdorff distance \cite{chowdhury2017distances}, in \cite[Thms. 2.3 and 2.4]{chowdhury2019gromov} the authors prove that GW in \eqref{eq: gw for networks} is a pseudo-metric in the space of networks (i.e, non-negative, symmetric and satisfies the triangle inequality). Moreover, $GW(\mathbb X,\mathbb Y)=0$ if and only if $\mathbb X$ and $\mathbb Y$ are \emph{weakly isomorphic}. As a summary, we provide the following diagram:
\begin{small}
    \begin{align*}
\begin{array}{c@{\quad}c}
\mathbb{X}, \mathbb{Y} \textbf{ mm-spaces} & \mathbb{X}, \mathbb{Y} \textbf{ networks} \\[0.4em]
\xymatrix{
& GW(\mathbb{X}, \mathbb{Y}) = 0 \ar@{<=>}[rd] \ar@{<=>}[ld] & \\
\mathbb{X} \sim \mathbb{Y} \ar@{<=>}[rr] & & \mathbb{X} \sim^w \mathbb{Y}
}
&
\xymatrix{
& GW(\mathbb{X}, \mathbb{Y}) = 0 \ar@{<=>}[rd] \ar@{<=}[ld] & \\
\mathbb{X} \sim \mathbb{Y} \ar@{=>}[rr] & & \mathbb{X} \sim^w \mathbb{Y}
}
\end{array}
\end{align*}
\end{small}

In this paper, we consider general \emph{networks} as described in Definition \ref{def: mm gm net}, up to the equivalence relation induced by \emph{weak isomorphisms}. Specifically, let $\mathcal{GW}$ denote the space of equivalence classes of networks under the weak isomorphism relation $\sim^w$. Given a network $\mathbb{Y} = (Y, \omega_Y, \mu_Y)$, we denote its equivalence class by $[\mathbb{Y}]$.

\subsection{GW on Finite Spaces} We follow the approach in \cite[Sec. 7]{memoli2011gromov}, where the author introduces a computationally motivated formulation of the GW problem (see also \cite{peyre2016gromov}).

From now on, we focus on \emph{finite} spaces $X = \{x_i\}_{i=1}^{N_{\mathbf{X}}}$, where $N_{\mathbf{X}} \in \mathbb{N}$ denotes the finite number of nodes. Finite networks can be represented as square matrices equipped with a probability measure on the columns (or equivalently, on the rows). Specifically, we encode the information of the triple $(X, \omega_X, \mu_X)$ as a pair $(\mathbf{X}, \mathrm{p}_{\mathbf{X}})$, where:
\begin{itemize}[leftmargin=*]

    \item $\mathrm p_{\mathbf X}=(\mathrm p_{\mathbf X}[1],\dots,\mathrm p_{\mathbf X}[N_{\mathbf X}])\in \mathbb R^{N_{\mathbf X}}$, with $\mathrm p_{\mathbf X}[j]> 0$  $\forall j\in[N_{\mathbf X}]$ and $\sum_{j\in [N_{\mathbf X}]}\mathrm p_{\mathbf X}[j]=1$, encodes the discrete measure $\mu_X$ supported on the $N_{\mathbf X}$ points; that is,  $\mu_X=\sum_{j=1}^{N_{\mathbf X}} \mathrm p_\mathbf{X}[j]\delta_{x_j}.$
    \item $\mathbf X\in\R^{N_{\mathbf X}\times N_{\mathbf X}}$ is a real  $N_{\mathbf X}\times N_{\mathbf X}$ matrix, sometimes referred to as a  \emph{dissimilarity matrix}, where each entry $(i, j)$ represents the interaction between nodes $x_i$ and $x_j$; that is, $\omega_X(x_i, x_j) = \mathbf{X}_{ij}.$
\end{itemize}
To simplify the notation, we omit the subscript and simply write $\mathrm p$ instead of $\mathrm p_\mathbf{X}$, and $N$ instead of $N_{\mathbf X}$. For convenience, we denote the space of such probability vectors $\p$ by $\mathcal{P}_{N}$.

To match the definition of mm-spaces, $\mathbf X$ should correspond with a distance matrix, 
$\mathbf{X}_{ij}= d_X(x_i, x_j)$, $\forall i, j \in [N]$, must therefore satisfy certain structural properties (at least, symmetry, non-negativity, and a zero diagonal). In this work, we adopt a more general abstraction where 
$\mathbf X$ need not represent a distance matrix and may encode arbitrary pairwise relations\footnote{This framework enables applications  in settings such as graphs, where $\mathbf X$ could represent adjacency matrices or Laplacian matrices, thereby broadening the scope of potential use cases.}. 

Given $(\mathbf X,\p)\in \R^{N\times N}\times\mathcal{P}_N$, $( \mathbf Y, \q)\in \R^{M\times M}\times\mathcal{P}_M$, the GW optimization problem in \eqref{eq: gw for networks}  adopts the form:

\begin{equation}\label{eq: rest_gw}
    GW((\mathbf X,\p),( \mathbf Y, \q))^2=\min_{\pi\in\Pi(\p,\q)}
    \sum_{i\in[N],j\in[M]}\sum_{k\in[N],l\in[M]}|\mathbf X_{ik}- \mathbf Y_{jl}|^2\pi_{ij}\pi_{kl}.
\end{equation}
It is a quadratic optimization problem with linear constraints, with no guarantees to be convex in general, and a minimizer is always attained \cite{memoli2011gromov}. 
When considering symmetric positive (semi)definite matrices, \eqref{eq: rest_gw} is convex. For completeness, we provide a proof of this result in Proposition \ref{prop: convex} in Suppl. Mat. \ref{app: GW dist}. However, we note that distance matrices $\mathbf X_{ij}=d_X(x_i,x_j)$ are not generally positive semidefinite.

\section{Fixed-Point Iteration for Gromov-Wasserstein Barycenters}\label{sec: fixed point approach}

In this section, we begin by revisiting one of the most widely used approaches for synthesizing GW barycenters  \cite{peyre2016gromov}, but without incorporating any regularization term (Section \ref{sec: Synthesis}). We then show that this iterative scheme constitutes a fixed-point iteration (Theorem \ref{prop: formula_bary} and its Corollary \ref{coro: fixed point}). Building on this, in Section \ref{sec: Analysis} we present our new contributions on solving the GW analysis problem, including Theorem \ref{thm: analysis} and Algorithm \ref{alg: analysis}.

\subsection{GW Barycenters via a Fixed-Point Approach: The Synthesis Problem}\label{sec: Synthesis} 

\noindent Consider finitely many templates $( \mathbf X^s, \p^s)\in \R^{N^s\times N^s}\times\mathcal{P}_{N^s}$, for $s\in [S]$, where $S\in\N$. 
The GW synthesis problem studied in \cite{peyre2016gromov} can be formulated as follows.  For $M\in\N$ and $\q\in\mathcal{P}_M$, consider the following restricted version of \eqref{eq: syn gw}:

\begin{equation}\label{eq: solomon_barycenter}
    \Delta_{S-1}\ni\lambda\longmapsto  \mathbf Y_\lambda \in \argmin_{ \mathbf Y\in\R^{M\times M}}\sum_{s\in[S]}\lambda_s \, GW(( \mathbf X^s, \p^s),( \mathbf Y, \q))^2.
\end{equation}
See Prop. \ref{remark: existence min synth} in  Suppl. Mat. \ref{app: gw fp synth} for a discussion on the existence of a minimum in \eqref{eq: solomon_barycenter}. 
If the general synthesis problem \eqref{eq: syn} is  replaced by \eqref{eq: solomon_barycenter}, we define the following barycentric space.

\begin{definition}\label{def: bary space}
    Let $\{( \mathbf X^s, \p^s)\in \R^{N^s\times N^s} \times \mathcal{P}_{N^s}\}_{s\in[S]}$ be a finite set of templates. For $M\in \N$ and $\q\in \mathcal{P}_M$ a probability vector with all entries strictly positive, we define:

    \begin{equation}\label{eq: bary space for M and q fixed}
        \mathrm{Bary}_{M,\q}(\{(\mathbf X^s,\p^s)\}_{s\in [S]}):=\{\mathbf{Y}_\lambda \text{ as in }\eqref{eq: solomon_barycenter}\mid \, \lambda\in \Delta_{S-1}\}.
\end{equation}    
 That is, $ \mathrm{Bary}_{M,\q}(\{(\mathbf X^s,\p^s)\}_{s\in [S]})$ is the set of solutions $\mathbf{Y}_\lambda$ of the GW synthesis problem \eqref{eq: solomon_barycenter} varying $\lambda\in \Delta_{S-1}$,  where the size $M$ of $\mathbf{Y}_\lambda$ is fixed, as well as $\mathrm q$.
\end{definition}

Notice that the problem in \eqref{eq: solomon_barycenter} involves minimizing over multiple variables:

  \begin{small}
  {\setlength{\jot}{0pt}
    \begin{align}\label{eq: multi_min}
  &\min_{ \mathbf Y\in\R^{M\times M}}\sum_{s\in[S]}\lambda_s GW((\mathbf X^s, \p^s),( \mathbf Y, \q))^2
    \notag\\[-3pt]
    &
    = \min_{ \mathbf Y\in\R^{M\times M}}\min_{\{\pi^s\in\Pi(\p^s,\q)\}_{s=1}^S} \sum_{s\in [S]}\lambda_s    \sum_{i\in[N^s],j\in[M]}\sum_{k\in[N^s],l\in[M]}|\mathbf X_{ik}^s- \mathbf Y_{jl}|^2\pi_{ij}^s\pi_{kl}^s .
    \end{align}}
    \end{small}
As mentioned in \cite{peyre2016gromov}, when the variables are treated separately, this optimization problem is convex with respect to $ \mathbf{Y}$, but not with respect to $\{\pi^{s}\}_{s=1}^S$ (it is quadratic, but the associated form is not necessarily positive semidefinite)\footnote{However, if the set of couplings $\{\pi^s\}_{s\in[S]}$ is not fixed in advance, then each $\pi^s$ may depend on $\mathbf{Y}$ (since it is optimal for $GW(( \mathbf X^s, \p^s),( \mathbf Y, \q))$), implying that \eqref{eq: multi_min} is not necessarily convex in $ \mathbf{Y}$. This is due to the general fact that while the supremum of convex functions is convex, the minimum is not; that is, even if $ x \mapsto h(x, y)$ is convex for each $y$, the function $g(x) = \min_y h(x, y)$ need not be convex.}.
In what follows we recall \cite[Prop. 3]{peyre2016gromov}\footnote{Although \cite[Prop. 3]{peyre2016gromov} allows for more general loss functions in the GW problem, we restrict our attention to the standard case $|\cdot|^2$ for clarity and simplicity.}.

\begin{proposition}[Prop. 3 \cite{peyre2016gromov} - Revisited proof in Suppl. Mat. \ref{app: gw fp synth}]\label{remark: bary_fix_pi}
Given $S\in \N$, consider templates  $( \mathbf X^s, \p^s)\in \R^{N^s\times N^s}\times\mathcal{P}_{N^s}$ for each $s\in [S]$, and $\lambda\in \Delta_{S-1}$. Let $M\in\N$ and let $\q\in \mathcal{P}_M$  be a probability vector. For each $s\in [S]$,
let $\pi^s$ be a coupling in $\Pi(\p^s,\q)$.
Consider the functional\footnote{If we were able to choose the couplings $\pi^s$ as the OT plans for each problem $GW((\mathbf{X}^s, \p^s), (\mathbf{Y}, \q))$, then the functional $J_\lambda$ could be interpreted as a simplified version of the objective in \eqref{eq: syn gw} (see  $G_\lambda$  in \eqref{eq: barycenter functional},  Section \ref{sec: analysis grad}) now acting over the space of matrices of fixed size, rather than over the space of networks, where the sizes of the dissimilarity matrices may vary and the probability distributions over the nodes are not predefined.} 
   \begin{equation}\label{eq: J functional}
      \R^{M\times M}\ni\mathbf Y\longmapsto  J_\lambda (\mathbf Y) :=
       \sum_{s\in [S]}\lambda_s    \sum_{i\in[N^s],j\in[M]}\sum_{k\in[N^s],l\in[M]}|\mathbf X_{ik}^s- \mathbf Y_{jl}|^2\pi_{ij}^s\pi_{kl}^s .
\end{equation}
    Then, the minimization problem $\displaystyle\min_{\mathbf Y\in \R^{M\times M}} J_\lambda (\mathbf Y)$ is well-defined. 
    Moreover, if all the entries of $\q$ are strictly positive\footnote{The assumption that all entries of $\q$ are strictly positive ensures that every node in the shape carries some mass, which is a natural requirement. Indeed, Definition \ref{def: mm gm net} already requires fully supported measures.}, there exists a unique minimizer $\mathbf Y^*\in \mathbb R^{M\times M}$ given by:

    \begin{equation}    
    \mathbf Y^*_{j  l}:= 
    \frac{1}{\q_{j}\q_{l}}\sum_{s\in [S]}\lambda_s \left((\pi^s)^T \mathbf X^s \pi^s\right)_{j l}, \qquad \forall l,j\in [M]. 
    \label{eq: bary_fix_pi}
\end{equation}
\end{proposition}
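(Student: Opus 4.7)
The strategy is to recognize that $J_\lambda(\mathbf{Y})$ is, entrywise in $\mathbf{Y}$, a separable quadratic form whose coefficients are determined solely by the marginal structure of the couplings $\pi^s$. I would expand the square and group terms by their dependence on $\mathbf{Y}$:
\begin{equation*}
|\mathbf{X}^s_{ik} - \mathbf{Y}_{jl}|^2 = (\mathbf{X}^s_{ik})^2 \;-\; 2\,\mathbf{X}^s_{ik}\mathbf{Y}_{jl} \;+\; \mathbf{Y}_{jl}^2 ,
\end{equation*}
so that $J_\lambda(\mathbf{Y}) = C + L(\mathbf{Y}) + Q(\mathbf{Y})$, where $C$ is independent of $\mathbf{Y}$, $L$ is linear, and $Q$ is quadratic.

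The key simplifications come from the marginal conditions $\pi^s \in \Pi(\p^s,\q)$. For the quadratic piece, $\sum_{i,k}\pi^s_{ij}\pi^s_{kl} = \q_j\q_l$, so after summing over $s$ (and using $\sum_s \lambda_s = 1$),
\begin{equation*}
Q(\mathbf{Y}) = \sum_{j\in[M]}\sum_{l\in[M]} \q_j \q_l \,\mathbf{Y}_{jl}^2 .
\end{equation*}
For the linear piece, I would recognize the inner sum $\sum_{i,k} \mathbf{X}^s_{ik}\pi^s_{ij}\pi^s_{kl}$ as the matrix product $((\pi^s)^T\mathbf{X}^s\pi^s)_{jl}$, whence
\begin{equation*}
L(\mathbf{Y}) = -2\sum_{j,l} \mathbf{Y}_{jl}\, a_{jl}, \qquad a_{jl} := \sum_{s\in[S]}\lambda_s \bigl((\pi^s)^T\mathbf{X}^s\pi^s\bigr)_{jl} .
\end{equation*}

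Because both $Q$ and $L$ are separable over the index pair $(j,l)$, the minimization of $J_\lambda$ reduces to $M^2$ independent one-variable quadratic minimizations of the form $\mathbf{Y}_{jl} \mapsto \q_j\q_l \,\mathbf{Y}_{jl}^2 - 2 a_{jl}\mathbf{Y}_{jl}$. Under the strict positivity hypothesis $\q_j > 0$ for all $j$, each such quadratic is strictly convex and coercive, so the first-order condition $2\q_j\q_l\mathbf{Y}_{jl} - 2a_{jl} = 0$ yields the unique minimizer $\mathbf{Y}^*_{jl} = a_{jl}/(\q_j\q_l)$, which is exactly formula \eqref{eq: bary_fix_pi}. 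Well-definedness (existence, even without strict positivity) can be handled by noting that if $\q_j = 0$, then necessarily $\pi^s_{ij} = 0$ for every $i$ and $s$, forcing $a_{jl} = 0$ as well; in this degenerate case the $(j,l)$-term vanishes identically and any real value attains the minimum.

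The main work is the bookkeeping of indices when identifying the matrix product $(\pi^s)^T\mathbf{X}^s\pi^s$ and verifying that the convention $\sum_s\lambda_s = 1$ collapses the quadratic coefficient to $\q_j\q_l$ uniformly in $s$. I do not foresee a genuine obstacle, as the argument reduces to elementary separable convex optimization once the expansion is carried out; the only subtle point is the edge case $\q_j = 0$, which is precisely why the uniqueness assertion explicitly requires $\q$ to be strictly positive.
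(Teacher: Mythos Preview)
Your proposal is correct and follows essentially the same approach as the paper: expand the square, use the marginal constraints $\pi^s\in\Pi(\p^s,\q)$ to collapse the quadratic coefficient to $\q_j\q_l$, and then solve the resulting first-order conditions. The paper phrases the last step via the Hessian (diagonal with entries $2\q_j\q_l$, hence positive definite when $\q>0$) rather than separability into $M^2$ one-variable quadratics, but these are the same observation; your treatment of the degenerate case $\q_j=0$ is in fact slightly more explicit than the paper's.
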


The following result provides a closed-form expression for the elements in \eqref{eq: bary space for M and q fixed}. 

\begin{theorem}\label{prop: formula_bary}
    Let $M\in\N$ and  $\q\in \mathcal{P}_M$ with all entries strictly positive. Let $ \mathbf Y_{\mathbf{\lambda}}\in  \mathrm{Bary}_{M,\q}(\{(\mathbf X^s,\p^s)\}_{s\in [S]})$ for some $\mathbf{\lambda}\in \Delta_{S-1}$. For each $s\in[S]$, let $\pi^{*s}$ be an optimal plan for $GW((\mathbf X^s,\p^s),( \mathbf Y_{\mathbf{\lambda}},\q))$, and consider 
    $\mathbf Y^*_{\mathbf{\lambda}}\in \R^{M\times M}$ as in \eqref{eq: bary_fix_pi}, i.e., $\mathbf Y^*_{\mathbf{\lambda}}:=\frac{1}{\q\q^T}\odot\sum_{s\in [S]}\mathbf{\lambda}_s (\pi^{*s})^T \mathbf X^s \pi^{*s}$.  
Then $\mathbf Y_{\mathbf{\lambda}}
 = \mathbf{Y}^*_{\mathbf{\lambda}}$.
\end{theorem}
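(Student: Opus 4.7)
The plan is to reduce the barycenter problem to the fixed-plan sub-problem governed by Proposition \ref{remark: bary_fix_pi}, by exploiting the fact that a joint minimizer must be partially optimal in each block of variables.

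First, I would rewrite the GW barycenter objective \eqref{eq: solomon_barycenter} as a joint minimization over $\mathbf{Y}$ and the couplings $\{\pi^s\}_{s\in[S]}$. Using the definition of $GW$ from \eqref{eq: rest_gw} together with the elementary identity $\sum_s \lambda_s \min_{\pi^s} F^s(\mathbf{Y},\pi^s) = \min_{\{\pi^s\}} \sum_s \lambda_s F^s(\mathbf{Y},\pi^s)$ (which holds because the variables $\pi^s$ are independent across summands), the problem becomes
\begin{equation*}
\min_{\mathbf{Y}\in\R^{M\times M}} \min_{\{\pi^s\in\Pi(\p^s,\q)\}_{s\in[S]}} \sum_{s\in[S]}\lambda_s \sum_{i,j,k,l}|\mathbf{X}^s_{ik}-\mathbf{Y}_{jl}|^2 \pi^s_{ij}\pi^s_{kl}.
\end{equation*}

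Next, by hypothesis $\mathbf{Y}_\lambda$ attains the outer minimum and, for each $s$, $\pi^{*s}$ is an optimal plan realizing $GW((\mathbf{X}^s,\p^s),(\mathbf{Y}_\lambda,\q))$. Hence $(\mathbf{Y}_\lambda,\{\pi^{*s}\}_{s\in[S]})$ is a minimizer of the joint problem. In particular, fixing the plans at $\{\pi^{*s}\}$ and optimizing only over $\mathbf{Y}$, the matrix $\mathbf{Y}_\lambda$ must minimize
\begin{equation*}
\mathbf{Y}\longmapsto \sum_{s\in[S]}\lambda_s \sum_{i,j,k,l}|\mathbf{X}^s_{ik}-\mathbf{Y}_{jl}|^2 \pi^{*s}_{ij}\pi^{*s}_{kl} = J_\lambda(\mathbf{Y}),
\end{equation*}
with $J_\lambda$ as in \eqref{eq: J functional}.

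Finally, since all entries of $\q$ are strictly positive, Proposition \ref{remark: bary_fix_pi} applies and guarantees that $J_\lambda$ admits a \emph{unique} minimizer, which is precisely the matrix $\mathbf{Y}^*_\lambda$ given by the closed-form expression \eqref{eq: bary_fix_pi}. Therefore $\mathbf{Y}_\lambda = \mathbf{Y}^*_\lambda$, proving the claim.

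The only subtle point is the legitimacy of passing from ``joint minimization'' to ``partial minimization'' and invoking uniqueness: one must be careful that the $\pi^{*s}$ are \emph{GW-optimal} for $(\mathbf{X}^s,\p^s)$ versus $(\mathbf{Y}_\lambda,\q)$ (not arbitrary couplings) so that the inner minimum over $\pi^s$ is in fact attained at $\pi^{*s}$ when $\mathbf{Y}=\mathbf{Y}_\lambda$; this is exactly what makes $(\mathbf{Y}_\lambda,\{\pi^{*s}\})$ a joint minimizer and closes the argument. The uniqueness clause in Proposition \ref{remark: bary_fix_pi} then does the rest — no further convexity or differentiation argument for the full joint problem is needed, which is important because the joint problem need not be convex.
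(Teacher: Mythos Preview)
Your proof is correct and follows essentially the same route as the paper's: both reduce to showing that $\mathbf{Y}_\lambda$ minimizes the fixed-plan functional $J_\lambda$ (with plans $\{\pi^{*s}\}$) and then invoke the uniqueness clause of Proposition~\ref{remark: bary_fix_pi}. The only cosmetic difference is that the paper spells out the sandwich chain $J_\lambda(\mathbf{Y}^*_\lambda)\le J_\lambda(\mathbf{Y}_\lambda)=\min_{\mathbf Y}\sum_s\lambda_s GW^2\le \min_{\mathbf Y}J_\lambda(\mathbf Y)=J_\lambda(\mathbf{Y}^*_\lambda)$ explicitly, whereas you compress this into the abstract principle ``joint minimizer $\Rightarrow$ partial minimizer in each block.''
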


\begin{proof}
 As in \eqref{eq: J functional}, consider the functional 
$J_\lambda$ on $\R^{M\times M}$, defined in this case by:

    \begin{equation}\label{eq: func_fix_pi}
         \mathbf Y\longmapsto J_\lambda (\mathbf Y):= \sum_{s\in [S]}\mathbf{\lambda}_s    \sum_{i\in[N^s], \, j\in[M]}\sum_{k\in[N^s], \, l\in[M]}|\mathbf X_{ik}^s-{ \mathbf Y}_{jl}|^2\pi_{ij}^{*s}\pi_{kl}^{*s}.
\end{equation}
    By Proposition \ref{remark: bary_fix_pi}, we know that it has a unique minimum given by $\mathbf Y^*_{\mathbf{\lambda}}$. Indeed, from the fact that $\q$ has all its entries strictly positive, it follows that $J_\lambda$ is strictly convex.
Therefore,

    \begin{align*}
        J_\lambda(\mathbf{Y}_\lambda^*)&\leq J_\lambda(\mathbf Y_\lambda) \qquad (\text{due to Proposition \ref{remark: bary_fix_pi}})\\
        &=\sum_{s\in [S]}\mathbf{\lambda}_s    \sum_{i\in[N^s],j\in[M]}\sum_{k\in[N^s],l\in[M]}|\mathbf X_{ik}^s-\mathrm (\mathbf Y_{\mathbf{\lambda}})_{jl}|^2\pi_{ij}^{*s}\pi_{kl}^{*s}\\
        &=\sum_{s\in[S]}\mathbf{\lambda}_s GW(( \mathbf X^s,\p^s),( \mathbf Y_{\mathbf{\lambda}}, \q))^2 \quad (\text{since } \pi^{*s} \text{ optimal for } GW(( \mathbf X^s,\p^s),( \mathbf Y_{\mathbf{\lambda}}, \q)))\\
        &=\min_{ \mathbf Y\in \R^{M\times M}}     \sum_{s\in[S]}\mathbf{\lambda}_s GW((\mathbf X^s,\p^s),( \mathbf Y, \q))^2 \quad (\text{since }  \mathbf Y_{\mathbf{\lambda}}\in  \mathrm{Bary}_{M,\q}(\{(\mathbf X^s,\p^s)\}_{s\in [S]}))\\
        &=\min_{ \mathbf Y\in\R^{M\times M}}\sum_{s\in [S]}\mathbf{\lambda}_s\min_{\pi^s\in\Pi(\p^s,\q)}    \sum_{i\in[N^s],j\in[M]}\sum_{k\in[N^s],l\in[M]}|\mathbf X_{ik}^s- \mathbf Y_{jl}|^2\pi_{ij}^s\pi_{kl}^s\\
        &\leq \min_{ \mathbf Y\in\R^{M\times M}} \sum_{s\in [S]}\mathbf{\lambda}_s    \sum_{i\in[N^s],j\in[M]}\sum_{k\in[N^s],l\in[M]}|\mathbf X_{ik}^s- \mathbf Y_{jl}|^2\pi_{ij}^{*s}\pi_{kl}^{*s}\\
        &= \min_{ \mathbf Y\in\R^{M\times M}}J_\lambda(\mathbf Y)\\ 
        &=J_\lambda(\mathbf Y_\lambda^*) \qquad (\text{due to Proposition \ref{remark: bary_fix_pi}}).
    \end{align*}
     As a consequence, ${ \mathbf Y}_{\mathbf{\lambda}}$ is another minimum of the function \eqref{eq: func_fix_pi}. Therefore, ${ \mathbf Y}_{\mathbf{\lambda}}=\mathbf Y^*_{\mathbf{\lambda}}$. 
\end{proof}

As a direct consequence, the algebraic form of GW barycenters in \eqref{eq: bary space for M and q fixed} is captured by:

\begin{equation*}
         \mathrm{Bary}_{M,\q}(\{(\mathbf X^s,\p^s)\}_{s\in [S]})\subseteq \left\{  \frac{1}{\q\q^T}\odot\sum_{s\in [S]}\lambda_s (\pi^s)^T \mathbf X^s \pi^s\mid \, \pi^s\in \Pi(\mathrm p^s, \mathrm q), \, \lambda\in \Delta_{S-1}\right\}.
\end{equation*}

The following corollary is a simple consequence of Theorem \ref{prop: formula_bary}. It guarantees that 
Algorithm \ref{alg: synthesis}, which is a simplified version of the iteration scheme
proposed in \cite{peyre2016gromov} to compute GW barycenters, has a fixed point (albeit does not guarantee convergence). 

\begin{algorithm}[ht!]
   \caption{Iterative GW Barycenter Synthesis}
   \label{alg: synthesis}
   \begin{algorithmic}[1]
   \STATE {\bfseries \underline{Inputs:}}\\
   - Templates $\{(\mathbf{X}^s,\p^s)\}_{s\in [S]}$\\ 
   - Weight vector $\lambda\in \Delta_{S-1}$\\
   - Predefined size of the output barycentric matrix $M$\\ 
   - $\q\in \mathcal{P}_M$ (usually uniform, i.e.,  $\q=\frac{1}{M}(1,\dots,1)\in \R^M$)\\
   - Initialization $\mathbf{Y}_0\in \mathbb R^{M\times M}$ 

   \smallskip

   \STATE {\bfseries \underline{Output:}} GW barycentric matrix 
   $\mathbf{Y} \in \R^{M\times M}$

   \smallskip

   \STATE Initialize $\mathbf{Y}\gets\mathbf Y_0$\\ 
   \REPEAT
   \FOR{$s=1,2,\ldots, S$}
   \STATE $\pi(\mathbf{Y},s) \gets$ optimal coupling for $GW((\mathbf{X}^s,\p^s),(\mathbf Y,\q))$ 
   \ENDFOR
   \STATE 
Update $\mathbf{Y}\gets \frac{1}{\q\q^T}\odot\sum_{s\in [S]}\lambda_s (\pi(\mathbf{Y},s))^T \mathbf X^s \pi(\mathbf{Y},s)$
   \UNTIL convergence
   \RETURN $\mathbf Y$
\end{algorithmic}
\end{algorithm}

\begin{corollary}\label{coro: fixed point}
Consider templates $\{(\mathbf X^s,\p^s)\}_{s=1}^S$, $M\in\N$, $\q\in \mathcal{P}_M$ with all entries strictly positive, and $\lambda\in \Delta_{S-1}$.
Given any real matrix $\mathbf Y$ of size $M\times M$, let $\pi(\mathbf Y,{s})\in \Pi(\p^s,\q)$ denote an optimal plan for $GW((\mathbf X^s,\p^s),( \mathbf Y,\q))$.  Consider the following operator in $\R^{M\times M}$:

\begin{equation}\label{eq: fixed point iteration}
      \rho_\lambda(\mathbf Y):= \frac{1}{\q\q^T}\odot\sum_{s\in [S]}\lambda_s \, (\pi( \mathbf Y,{s}))^T \, \mathbf X^s \, \pi( \mathbf Y,{s}) \qquad \forall \, \mathbf Y\in \R^{M\times M}.         
\end{equation}
Then, every matrix in $\mathrm{Bary}_{M,\q}(\{(\mathbf X^s,\p^s)\}_{s\in [S]})$ that is a solution for \eqref{eq: solomon_barycenter} with weights $\lambda$ is a fixed point of the map $\rho_\lambda$.
\end{corollary}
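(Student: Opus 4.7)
The strategy is to derive this essentially as a direct corollary of Theorem~\ref{prop: formula_bary}. The key observation is that the definition of the operator $\rho_\lambda$ builds in a choice of \emph{optimal} GW couplings $\pi(\mathbf{Y},s)$ between $(\mathbf{X}^s,\p^s)$ and $(\mathbf{Y},\q)$, which is precisely the hypothesis needed to invoke Theorem~\ref{prop: formula_bary}.

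Concretely, I would proceed as follows. Fix $\lambda\in\Delta_{S-1}$ and let $\mathbf{Y}_\lambda\in\mathrm{Bary}_{M,\q}(\{(\mathbf{X}^s,\p^s)\}_{s\in[S]})$ be any solution of \eqref{eq: solomon_barycenter} with this weight vector. By the definition of $\rho_\lambda$, each coupling $\pi(\mathbf{Y}_\lambda,s)\in\Pi(\p^s,\q)$ is optimal for $GW((\mathbf{X}^s,\p^s),(\mathbf{Y}_\lambda,\q))$. Setting $\pi^{*s}:=\pi(\mathbf{Y}_\lambda,s)$ for each $s\in[S]$, these couplings satisfy exactly the hypotheses of Theorem~\ref{prop: formula_bary} applied to the barycenter $\mathbf{Y}_\lambda$. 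The theorem then yields
\begin{equation*}
\mathbf{Y}_\lambda \;=\; \mathbf{Y}^*_\lambda \;=\; \frac{1}{\q\q^T}\odot\sum_{s\in[S]}\lambda_s\,(\pi(\mathbf{Y}_\lambda,s))^T\,\mathbf{X}^s\,\pi(\mathbf{Y}_\lambda,s) \;=\; \rho_\lambda(\mathbf{Y}_\lambda),
\end{equation*}
which is the desired fixed-point identity.

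The one conceptual point worth flagging, rather than an actual obstacle, is that optimal GW plans need not be unique, so strictly speaking $\rho_\lambda$ is a set-valued map depending on a selection rule for $\pi(\mathbf{Y},s)$. This is however harmless for the statement: for \emph{any} admissible choice of optimal couplings used to define $\rho_\lambda(\mathbf{Y}_\lambda)$, Theorem~\ref{prop: formula_bary} guarantees the resulting matrix equals $\mathbf{Y}_\lambda$, since the theorem's conclusion holds for every optimal selection $\pi^{*s}$. Thus the fixed-point property is independent of the selection. I do not foresee any substantive difficulty beyond this bookkeeping remark, since all the analytical work, in particular the strict convexity of $J_\lambda$ coming from $\q$ having strictly positive entries, was already carried out in Proposition~\ref{remark: bary_fix_pi} and Theorem~\ref{prop: formula_bary}.
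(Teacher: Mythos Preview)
Your proof is correct and follows essentially the same route as the paper: both invoke Theorem~\ref{prop: formula_bary} directly on a barycenter $\mathbf{Y}_\lambda$ with the optimal plans $\pi(\mathbf{Y}_\lambda,s)$ playing the role of $\pi^{*s}$, yielding $\rho_\lambda(\mathbf{Y}_\lambda)=\mathbf{Y}^*_\lambda=\mathbf{Y}_\lambda$. Your additional remark about the selection-independence of $\rho_\lambda$ at the barycenter is a welcome clarification and aligns with the paper's Remark~\ref{remark: does not depend on the plan}.
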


\begin{proof}
    Given $\lambda\in \Delta_{S-1}$,
    let $\mathbf Y_\lambda$ be a solution to the synthesis problem \eqref{eq: solomon_barycenter}. Thus, $\mathbf Y_\lambda\in \mathrm{Bary}_{M,\q}(\{(\mathbf X^s,\p^s)\}_{s\in [S]})$. Then, by Theorem \ref{prop: formula_bary} we have 
    that $\mathbf Y_\lambda$ is a fixed point of $\rho_\lambda$:

    \begin{align*}
        \rho_\lambda(\mathbf Y_\lambda)= \frac{1}{\q\q^T}\odot\sum_{s\in [S]}\lambda_s (\pi( \mathbf Y_\lambda,{s}))^T \mathbf X^s \pi( \mathbf Y_\lambda,{s})=\mathbf Y_\lambda^*=\mathbf Y_\lambda.
\end{align*}
\end{proof}

Algorithm \ref{alg: synthesis} can be viewed as applying  the function $\rho_\lambda$ iteratively, which is guaranteed to have a fixed point. In the statement of Corollary \ref{coro: fixed point}, we use the notation $\pi(\mathbf Y,s)$ to emphasize the dependence on $\mathbf{Y}$, as it is optimal for the GW problem $GW((\mathbf X^s,\p^s),(\mathbf Y, \q))$.

\begin{remark}[See also Example \ref{example: simple} in Suppl. Mat. \ref{app: gw fp synth}]\label{remark: does not depend on the plan}
    Theorem \ref{prop: formula_bary} implies that formula \eqref{eq: bary_fix_pi} for $\mathbf Y^*_{\mathbf{\lambda}}$ 
    does not depend on the initial choice of GW OT plans $\{\pi^{*s}\}_{s\in [S]}$.  Indeed, 
    let  $ \mathbf Y_{\mathbf{\lambda}}\in  \mathrm{Bary}_{M,\q}(\{(\mathbf X^s,\p^s)\}_{s\in [S]})$ for some $\mathbf{\lambda}\in \Delta_{S-1}$. For each $s\in [S]$,  let $\pi^{*s}$ and $\widetilde \pi^{*s}$ be couplings in $\Pi(\p^s,\q)$ both optimal for $GW((\mathbf X^s,\p^s),( \mathbf Y_{\mathbf{\lambda}},\q))$. Then, Theorem \ref{prop: formula_bary} implies that

    \begin{equation*}
        \frac{1}{\q\q^T}\odot\sum_{s\in [S]}\mathbf{\lambda}_s (\pi^{*s})^T \mathbf X^s \pi^{*s}=\mathbf Y_\lambda = \frac{1}{\q\q^T}\odot\sum_{s\in [S]}\mathbf{\lambda}_s (\widetilde\pi^{*s})^T \mathbf X^s \widetilde\pi^{*s}.
\end{equation*}
\end{remark}

\subsection{GW Barycenters via a Fixed-Point Approach: The Analysis Problem}\label{sec: Analysis}

Let us fix finitely many templates $( \mathbf X^s, \p^s)\in \R^{N^s\times N^s}\times\mathcal{P}_{N^s}$, for $s\in [S]$. Let $M\in \N$ and $\q\in \mathcal{P}_M$. In this section, we consider the following version of the \emph{GW analysis problem} introduced in \eqref{eq: analysis gw}:

\begin{equation}\label{eq: analysis_gw_bary_problem}
    \R^{M\times M}\ni \mathbf Y\longmapsto \lambda_{ \mathbf Y}:=\argmin_{\lambda\in \Delta_{S-1}} d_a( \mathbf Y, \mathbf Y_\lambda)^2,
\end{equation}
where $\mathbf{Y}_\lambda\in \mathrm{Bary}_{M,\q}(\{(\mathbf X^s,\p^s)\}_{s\in [S]})$, i.e., it is a solution of the GW synthesis problem \eqref{eq: solomon_barycenter} corresponding to weight vector $\lambda$. The function  $d_a(\cdot,\cdot)$ denotes a divergence measure on the space of $M\times M$ real matrices, which is to be specified.  

Theorem \ref{thm: analysis} below solves the GW analysis problem \eqref{eq: analysis_gw_bary_problem} under the GW-BCM hypothesis, that is, assuming that $\mathbf Y\in \R^{M\times M}$ belongs to the GW barycenter space given in Definition \ref{def: bary space} (in symbols, $\mathbf Y\in \mathrm{Bary}_{M,\q}(\{(\mathbf X^s,\p^s)\}_{s\in [S]})$). 
Under this hypothesis, Theorem \ref{prop: formula_bary} implies that 

    \begin{equation}\label{eq: Y and lambda_y}
      \mathbf Y=\sum_{s\in [S]}(\lambda_{\mathbf Y})_s \, F(\mathbf Y,s), \qquad \text{ for some } \lambda_{\mathbf Y}\in \Delta_{S-1}, 
\end{equation}
    where, for each $s\in [S]$, one considers $\pi^s$ is an optimal plan for $GW((\mathbf X^s,\p^s),(\mathbf Y,\q))$ to define

    \begin{equation}\label{eq: notation F}
    F( \mathbf Y, s):=\frac{1}{\q\q^T}\odot (\pi^s)^T \mathbf X^s \pi^s \in \R^{M\times M}.   
\end{equation}
    It is important to notice that each $\pi^s$ depends on the input $ \mathbf Y$, but, by Remark \ref{remark: does not depend on the plan}, we could consider another set of optimal plans $\{\widetilde{\pi^s}\}_{s=1}^S$ and the expression \eqref{eq: Y and lambda_y} would not differ. 
    This implies that  problem \eqref{eq: analysis_gw_bary_problem} for the given $ \mathbf Y\in  \mathrm{Bary}_{M,\q}(\{(\mathbf X^s, \p^s)\}_{s=1}^S)$ coincides with

    \begin{equation}\label{eq: our_analysis_bary_problem}
     \lambda_{ \mathbf Y}:=\argmin_{\lambda\in \Delta_{S-1}} d_a^2\left( \mathbf Y, \,  \sum_{s\in [S]}\lambda_s F( \mathbf Y,s)\right),
\end{equation}
where, by the non-negativity of $d_a(\cdot,\cdot)$ and by the GW-BCM hypothesis, we know in advance that for such $\mathbf Y$, $\min_{\lambda\in \Delta_{S-1}} d_a\left( \mathbf Y, \,  \sum_{s\in [S]}\lambda_s F( \mathbf Y,s)\right)=0.$
 In other words, 
 we aim to minimize, over the convex set $\Delta_{S-1}$, the functional  

 \begin{equation}\label{eq: lambda functional}
     \Theta:\R^S\to [0,\infty) \quad \text{ defined by } \qquad \Theta(\lambda):=d_a^2\left( \mathbf Y,\sum_{s\in [S]}\lambda_s F( \mathbf Y,s)\right).
\end{equation}

Notice that one can verify that $\mathbf{Y}\in \mathrm{Bary}_{M,\q}(\{(\mathbf X^s,\p^s)\}_{s\in [S]})$ if and only if $d_a(\mathbf{Y},\mathbf{Y}_{\mathbf{\lambda}})=0$ for some $\mathbf{Y}_\lambda\in \mathrm{Bary}_{M,\q}(\{(\mathbf X^s,\p^s)\}_{s\in [S]})$, \emph{for any} choice of divergence $d_a(\cdot,\cdot)$ (see Remark \ref{rem: for any div} for its formulation within the general BCM framework). In other words, to determine whether an element `strictly' belongs to the set in \eqref{eq: bary space for M and q fixed}, it suffices to use any nonnegative dissimilarity measure $d_a$ satisfying the identity of indiscernibles.

In this section, we have adopted the simplification that  $\mathrm{Bary}_{M,\q}(\{(\mathbf X^s,\p^s)\}_{s\in [S]})$ is a subset of $\mathbb R^{M\times M}$ (since dissimilarity matrices have fixed size $M$, and the weight vector $\q$ is also fixed). As a consequence, because the choice of the divergence $d_a(\cdot,\cdot)$ on $\mathbb R^{M\times M}$ does not affect the final result, for the following Theorem \ref{thm: analysis}, we adopt 
$d_a(\mathbf X, \mathbf Y)=\|\mathbf Y-\mathbf X\|_{\mathrm{Frob}}$. Considering this particular norm allows us to achieve our goal of providing closed-form expressions for the weights $\lambda$ (see \eqref{eq: klambda=b} below)) under the GW-BCM hypothesis (i.e., in the exact case, when the input is assumed to be a true barycenter of the fixed templates). See also Corollary \ref{coro: any} reflecting the independence of  solution $\lambda$ in Theorem \ref{thm: analysis} from the choice of $d_a$.

This follows the ideas of \cite{bonneel2016wasserstein} in the context of the Wasserstein structure rather than GW. In that article, the loss function is chosen so that its gradient is easy to compute: the authors test different divergences ($L^1$, $L^2$, and $\mathrm{KL}$) instead of considering the Wasserstein distance (between histograms) as the loss function for the corresponding analysis problem. These choices still lead to optimization problems requiring gradient-based methods for the barycentric weights, and no simplified closed-form expression for
$\lambda$ (as in \eqref{eq: klambda=b} below) is provided.

\begin{theorem}\label{thm: analysis} 
Let $\{( \mathbf X^s, \p^s)\in \R^{N^s\times N^s}\times\mathcal{P}_{N^s}\}_{s\in[S]}$, $M\in \N$, $(\mathbf Y, \q)\in  \R^{M\times M}\times \mathcal{P}_M$. Let $F(\mathbf{Y},s )$  be as in \eqref{eq: notation F}, for each $s\in [S]$, and consider the loss $d_a(\mathbf X, \mathbf Y)=\|\mathbf Y-\mathbf X\|_{\mathrm{Frob}}$ in the formulation of the GW analysis problem \eqref{eq:  analysis_gw_bary_problem}. If $\mathbf Y\in \mathrm{Bary}_{M,\q}(\{(\mathbf X^s,\p^s)\}_{s\in [S]})$, 
then a corresponding weight vector $\lambda_{\mathbf{Y}} \in \Delta_{S-1}$ associated with $\mathbf{Y}$ is given by a solution of the following convex quadratic minimization problem: 

\begin{equation}\label{eq: quad frob}
    \min_{\lambda\in \Delta_{S-1}}\lambda^T\mathcal{Q}\lambda, \quad \text{where } \,  \mathcal{Q}_{sr}:= \mathrm{tr}\left(\left( F(\mathbf Y,s)-\mathbf Y\right)^T\left(F( \mathbf Y,r)-\mathbf Y\right)\right) \, \, \,  \forall s,r\in [S].
\end{equation}
Equivalently, $\lambda_{\mathbf Y}$ is a solution of the following linear problem:

\begin{equation}\label{eq: klambda=b}
    {K\lambda = b} \qquad \text{ subject to } \lambda\in \Delta_{S-1}\subset \R^S,
\end{equation}
where $K\in \R^{S\times S}$ and $b\in \R^S$ are given by

\begin{equation}\label{eq: K and b}
    K_{sr}:= \mathrm{tr}\left(F( \mathbf Y,s)^T F( \mathbf Y,r)\right) \quad \forall s,r\in[S], \quad 
    b_s:=\mathrm{tr}\left( \mathbf Y^T \, F( \mathbf Y,s) \right) \quad \forall s\in [S].
\end{equation}
\end{theorem}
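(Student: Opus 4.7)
The plan is to reduce the analysis problem \eqref{eq: our_analysis_bary_problem} (with $d_a=\|\cdot\|_{\mathrm{Frob}}$) to an explicit quadratic in $\lambda$ and then identify its minimizers. Under the GW-BCM hypothesis, I would invoke Theorem \ref{prop: formula_bary} (together with Remark \ref{remark: does not depend on the plan}) to obtain some $\lambda_{\mathbf{Y}}\in\Delta_{S-1}$ with $\mathbf{Y}=\sum_{s\in[S]}(\lambda_{\mathbf{Y}})_s F(\mathbf{Y},s)$, where $F(\mathbf{Y},s)$ is computed from any optimal GW plan between $(\mathbf{X}^s,\p^s)$ and $(\mathbf{Y},\q)$ (the sum being independent of the choice). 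Hence the objective $\Theta(\lambda)=\|\mathbf{Y}-\sum_s\lambda_s F(\mathbf{Y},s)\|_{\mathrm{Frob}}^2$ from \eqref{eq: lambda functional} vanishes at $\lambda_{\mathbf{Y}}$, and since $\Theta\geq 0$ on all of $\mathbb{R}^S$, this $\lambda_{\mathbf{Y}}$ is automatically a global minimizer of $\Theta$ both over the simplex and over the ambient space.

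For the quadratic form \eqref{eq: quad frob}, I would exploit the simplex identity $\sum_s\lambda_s=1$ to rewrite $\mathbf{Y}-\sum_s\lambda_s F(\mathbf{Y},s)=\sum_s\lambda_s(\mathbf{Y}-F(\mathbf{Y},s))$ and then expand the squared Frobenius norm via bilinearity of the Frobenius inner product, obtaining
$$\Theta(\lambda)=\Big\|\sum_{s\in[S]}\lambda_s\bigl(F(\mathbf{Y},s)-\mathbf{Y}\bigr)\Big\|_{\mathrm{Frob}}^2=\lambda^T\mathcal{Q}\lambda\qquad\forall\lambda\in\Delta_{S-1},$$
with $\mathcal{Q}$ the Gram matrix in the statement, which is therefore positive semi-definite. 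This shows \eqref{eq: quad frob} is a convex quadratic program whose value agrees with $\Theta$ on the simplex, and so $\lambda_{\mathbf{Y}}$ is one of its solutions.

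For the linear system \eqref{eq: klambda=b}, I would expand $\Theta$ directly on all of $\mathbb{R}^S$, without invoking the simplex constraint:
$$\Theta(\lambda)=\|\mathbf{Y}\|_{\mathrm{Frob}}^2-2b^T\lambda+\lambda^T K\lambda,$$
with $K$ and $b$ as in \eqref{eq: K and b}. This is a convex quadratic on $\mathbb{R}^S$ whose critical points satisfy the normal equations $K\lambda=b$. Because $\lambda_{\mathbf{Y}}$ is an unconstrained global minimizer of $\Theta$ (attaining the value $0$) and already lies in $\Delta_{S-1}$, it solves $K\lambda_{\mathbf{Y}}=b$ subject to the simplex constraint, as claimed. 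The main delicate point I anticipate is not any single computation but ensuring that the fixed-point identity from Theorem \ref{prop: formula_bary} is invoked correctly: one needs that $F(\mathbf{Y},s)$ is well-defined despite the possible non-uniqueness of optimal GW plans, so that the minimum-zero certificate for $\Theta$ holds regardless of the specific plans used to define $\mathcal{Q}$, $K$, and $b$.
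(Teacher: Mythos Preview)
Your proposal is correct and follows essentially the same approach as the paper: invoke Theorem \ref{prop: formula_bary} to certify $\Theta(\lambda_{\mathbf Y})=0$, use the simplex identity $\sum_s\lambda_s=1$ to rewrite $\Theta(\lambda)=\lambda^T\mathcal{Q}\lambda$ with $\mathcal{Q}$ a Gram (hence PSD) matrix, and expand $\Theta$ unconstrained as $\|\mathbf Y\|_{\mathrm{Frob}}^2-2b^T\lambda+\lambda^TK\lambda$ to read off the normal equations $K\lambda=b$. Your separation of the two expansions (one valid only on $\Delta_{S-1}$, the other on all of $\mathbb{R}^S$) is in fact slightly cleaner than the paper's exposition, which passes through $\lambda^T\mathcal{Q}\lambda$ en route to the $K,b$ decomposition without flagging that the intermediate identity uses the simplex constraint; your explicit attention to the well-definedness of $F(\mathbf Y,s)$ via Remark \ref{remark: does not depend on the plan} is also apt.
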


\begin{proof} 

Let $ \mathbf Y\in  \mathrm{Bary}_{M,\q}(\{(\mathbf X^s, \p^s)\}_{s=1}^S)$.
Under the choice of the Frobenius norm 
$\|\cdot\|_{\mathrm{Frob}}$ in $\R^{M\times M}$, the nonnegative functional $\Theta$ in  \eqref{eq: lambda functional} can be written as

\begin{align*}
   \Theta(\lambda)&= \mathrm{tr}\left(\left( \sum_{s\in [S]}\lambda_s F( \mathbf Y,s)-\mathbf Y\right)^T\left( \sum_{r\in [S]}\lambda_r F( \mathbf Y,r)-\mathbf Y\right)\right) \notag\\ 
    & =\sum_{s,r\in [S]}\lambda_s\lambda_r \underbrace{\mathrm{tr}\left(\left( F(\mathbf Y,s)-\mathbf Y\right)^T\left(F( \mathbf Y,r)-\mathbf Y\right)\right)}_{{\mathcal{Q}}_{sr}} \notag \\
    &=\lambda^T\mathcal{Q}\lambda .\notag\\
\end{align*}
Due to \eqref{eq: Y and lambda_y}, the GW barycentric coordinate vector $\lambda_{\mathbf Y}$ is such that

\begin{equation}\label{eq: zero}
    \Theta(\lambda_{\mathbf Y})=\lambda_{\mathbf Y}^T\mathcal Q \lambda_{\mathbf Y}=0.
\end{equation}
 Equivalently, $\lambda_{\mathbf Y}$ is a solution  of the  quadratic minimization problem \eqref{eq: quad frob}.
The matrix $\mathcal{Q}$ is symmetric and  positive semidefinite. This holds since it is the Gram matrix of the system of matrices $\{F(\mathbf{Y},s)-\mathbf Y\}_{s=1}^S$ with respect to the inner product in $\R^{M\times M}$ given by $\langle A,B\rangle=\mathrm{tr}(A^TB)$, and it is well-known that all Gram matrices are symmetric positive semidefinite \cite{schwerdtfeger1961introduction}. Thus, $\Theta$ is convex and non-negative as a function over the whole $\R^S$.
So, critical points of $\Theta$ are global minima.
Moreover, we can decompose

\begin{equation*}
    \lambda^T\mathcal{Q}\lambda=\mathrm{tr}\left( \mathbf Y^T\mathbf Y\right)-2\sum_{s\in[S]}\lambda_sb_s+\sum_{s\in [S]}\sum_{r\in [S]}\lambda_s\lambda_rK_{sr},
\end{equation*}    
where $K\in \R^{S\times S}$ and $b\in \R^{S}$ are given by \eqref{eq: K and b}. 
Thus, minimizing $\Theta(\lambda)$ over the simplex $\Delta_{S-1}$  is equivalent to solving the quadratic program

\begin{equation}\label{eq: quadratic program 1}
    \min_{\lambda\in \Delta_{S-1}} \lambda^TK\lambda-2\lambda^Tb,
\end{equation}
and the critical points of $\Theta$ are the solutions of the linear problem $K\lambda=b$. 
As a conclusion, the vector solutions $\lambda\in \R^S$ of the linear system $K\lambda=b$ are global minima of $\Theta$. These solutions might not be unique but under the GW-BCM condition (i.e., assuming $\mathbf Y\in \mathrm{Bary}_{M,\q}(\{(\mathbf X^s, \p^s)\}_{s=1}^S)$), it is guaranteed that there exists at least one solution $\lambda_{\mathbf Y}$ in $\Delta_{S-1}$. In other words, the solutions of \eqref{eq: quad frob} (or \eqref{eq: quadratic program 1}) are given by \eqref{eq: klambda=b}.
\end{proof}

Algorithm \ref{alg: analysis} implements the analysis scheme laid out in Theorem \ref{thm: analysis}, and we refer the reader to Suppl. Mat. \ref{app: comp comp} for an analysis of its computational complexity.

\begin{corollary}\label{coro: any}
    Let $\{( \mathbf X^s, \p^s)\in \R^{N^s\times N^s}\times\mathcal{P}_{N^s}\}_{s\in[S]}$, $M\in \N$, $(\mathbf Y, \q)\in  \R^{M\times M}\times \mathcal{P}_M$. In the formulation of the GW analysis problem \eqref{eq:  analysis_gw_bary_problem} consider \textbf{any} nonnegative loss function $d_a(\cdot,\cdot)$ in $\R^{M\times M}\times \R^{M\times M}$ satisfying the identity of indiscernibles. If $\mathbf Y\in \mathrm{Bary}_{M,\q}(\{(\mathbf X^s,\p^s)\}_{s\in [S]})$, 
then a  weight vector $\lambda_{\mathbf{Y}} \in \Delta_{S-1}$ associated with $\mathbf{Y}$ is given by a solution of the minimization problem \eqref{eq: quad frob} provided in Theorem \ref{thm: analysis}.
\end{corollary}

\begin{proof}
    If $\mathbf Y\in \mathrm{Bary}_{M,\q}(\{(\mathbf X^s,\p^s)\}_{s\in [S]})$, then
    it is given by the expression \eqref{eq: Y and lambda_y} for some weight vector $\in \Delta_{S-1}$.
    Therefore, for \textbf{any} choice of loss $d_a(\cdot,\cdot)$ that is nonnegative and satisfies the identity of indiscernibles, we have
    $d_a(\mathbf{Y}, \sum_{s\in [S]}\lambda_s \, F(\mathbf Y, s))=0$ for some $\lambda\in \Delta_{S-1}$ (which is not necessarily unique). If we consider $\lambda_{\mathbf{Y}}$ as  a solution of the minimization problem \eqref{eq: quad frob} in Theorem \ref{thm: analysis}, then, from the proof of such theorem, this is equivalent to have \eqref{eq: zero}, i.e,  

    \begin{equation}\label{eq: fro}
        \left\|\mathbf Y-\sum_{s\in [S]}(\lambda_{\mathbf Y})_s \, F(\mathbf Y, s)\right\|_{\mathrm{Frob}}=\lambda_{\mathbf Y}^T\mathcal Q \lambda_{\mathbf Y}=0.
\end{equation}
    Since $\|\cdot\|_{\mathrm{Frob}}$ is a norm in the space of $M\times M$ square matrices, \eqref{eq: fro} implies $\mathbf Y=\sum_{s\in [S]}(\lambda_{\mathbf Y})_s \, F(\mathbf Y, s)$. So, $d_a(\mathbf Y,\sum_{s\in [S]}(\lambda_{\mathbf Y})_s \, F(\mathbf Y, s))=0$, and therefore the non-negativity of $d_a$ leads to
    $\lambda_{\mathbf Y}\in\argmin_{\lambda\in \Delta_{S-1}} d_a( \mathbf Y, \mathbf Y_\lambda)^2$ (i.e., the solution $\lambda_{\mathbf Y}$ provided in Theorem \ref{thm: analysis} is also a solution for the GW analysis problem \eqref{eq:  analysis_gw_bary_problem} with arbitrary divergence $d_a$).
\end{proof}

\begin{remark}\label{remark: permutation}
Node relabeling corresponds, at the matrix level, to \emph{permutation similarity}: if $P$ is a permutation matrix and $\mathbf Y$ is a dissimilarity matrix, then $P\mathbf Y P^T$ represents the same object with relabeled nodes.
If in \eqref{eq: analysis_gw_bary_problem} we take $d_a$ to be the Frobenius norm, then the analysis objective is invariant under such relabeling. Indeed, the Frobenius norm is invariant under permutation similarity (i.e., 
$
\|\mathbf Y\|_{\mathrm{Frob}}=\|P\mathbf Y P^T\|_{\mathrm{Frob}}$
for  all permutation matrices $P$).
Moreover, the (fixed-size) GW barycenter set is closed under relabeling: if $\mathbf Y_\lambda$ belongs to the GW barycenter family generated by certain templates, then so does $P\mathbf Y_\lambda P^T$ (which corresponds to the same parameter $\lambda$ as $GW(\mathbf Y_\lambda,\,P\mathbf Y_\lambda P^T)=0$).
Therefore,

\begin{equation*}
\min_{\lambda}\|\mathbf Y-\mathbf Y_\lambda\|_{\mathrm{Frob}}
=\min_{\lambda}\|P(\mathbf Y-\mathbf Y_\lambda)P^T\|_{\mathrm{Frob}}
=\min_{\lambda}\|P\mathbf YP^T-\mathbf Y_\lambda\|_{\mathrm{Frob}}.
\end{equation*}
    Moreover, our surrogate analysis problem \eqref{eq: our_analysis_bary_problem} is also permutation/relabeling invariant for any target $(\mathbf Y,\q)$ (inside or outside the barycenter space) when considering $d_a$ as the Frobenius norm. Indeed, let $P$ be an arbitrary permutation that produces $(P\mathbf YP^T,P\q)$ (which encodes the same information as the original shape after relabeling the nodes). If $\pi^s\in \Pi(\p^s,\q)$ is an optimal plan for $GW((\mathbf X^s,\p^s),(\mathbf{Y},\q))$, it holds that $\pi^sP^T\in \Pi(\p^s,P\q)$ is optimal for $GW((\mathbf X^s,\p^s),(P\mathbf{Y}P^T,P\q))$. In particular, by using  $(PAP^T)\odot (PBP^T)=P(A\odot B)P^T$ (equivariance of the Hadamard product under simultaneous permutation via conjugation), we  have $F(P\mathbf YP^T, s)=\frac{1}{P\q\q^TP^T}\odot P(\pi^s)^T \mathbf X^s \pi^sP^T=P\left(\frac{1}{\q\q^T}\odot (\pi^s)^T \mathbf X^s \pi^s\right)P^T=PF(\mathbf Y, s)P^T$. Therefore, $\|P\mathbf YP^T-\sum_{s}\lambda_s F(P\mathbf YP^T,s)\|_{\mathrm{Frob}}=\|\mathbf Y-\sum_{s}\lambda_s F(\mathbf Y,s)\|_{\mathrm{Frob}}$. We refer to Section \ref{sec: permutation} in the Suppl. Mat. for empirical demonstration. 
\end{remark}

\begin{algorithm}[H]
   \caption{Analysis: GW Barycenter Coordinates}
   \label{alg: analysis}
   \begin{algorithmic}[1] 
   \STATE {\bfseries \underline{Inputs:}}\\
   - Templates $\{(\mathbf{X}^s,\p^s)_{s\in [S]}\}$\\ 
   - Shape/Network $\mathbf Y\in \R^{M\times M}$, $\q\in \mathcal{P}_M$ \\

   \smallskip

   \STATE {\bfseries \underline{Output:}} Weight vector $\lambda$\\

   \smallskip

   \FOR{$s=1,2,\ldots, S$}
   \STATE $\pi(\mathbf Y, s )\gets$  optimal coupling for $GW((\mathbf{X}^s,\p^s),(\mathbf Y,\q))$ \\
   \STATE  $F(\mathbf Y,s)\gets \pi(\mathbf Y, s)^T\mathbf{X}^s\pi(\mathbf Y, s)$\\
   \ENDFOR
   \FOR{$s=1,2,\ldots, S$}
    \FOR{$r=s, s+1,\ldots, S$}
        \STATE $\mathcal{Q}_{sr}=\mathrm{tr}\left((F(\mathbf Y,s)-\mathbf Y)^T(F(\mathbf Y,s)-\mathbf Y)\right)$ 
        \STATE $\mathcal{Q}_{rs}=\mathcal{Q}_{sr}$
    \ENDFOR  
\ENDFOR
   \STATE $\lambda\gets$  $\arg\min_{\lambda\in\Delta_{S-1}} \lambda^T \mathcal{Q}\lambda$ 
   \RETURN $\lambda$ 
   \end{algorithmic}
\end{algorithm}

\section{A Gradient Method for Gromov-Wasserstein Barycenters}\label{sec: analysis grad}

In \cite{chowdhury2020gromov}, given a network $\mathbb X$, the authors provide a way of computing directional derivatives of the functional $\mathbb Y \longmapsto GW^2( \mathbb X, \mathbb Y)$ in the direction of  vectors in a `tangent space' at $\mathbb Y$ 
\cite{beier2024tangential, chowdhury2019gromov, sturm2023space}. 
Thus, in 
Subsections \ref{sec: tan}, \ref{sec: bu}, and \ref{sec: synthesis bu} we shift to a different approach that allows to compute gradients of functionals in GW space using a blow-up technique, and thus defining a weaker version of GW barycenters as Karcher means. This culminates in our new contributions to the GW analysis problem (Section \ref{sec: analysis bu}), including Theorem \ref{prop: gradient method} and Algorithm \ref{alg: analysis blow up}. Connections between the fixed-point and gradient-based approaches are discussed in Section \ref{sec: relation}.

\subsection{Tangent Space, Exponential Map,  Directional Derivatives, and Gradients}\label{sec: tan}

As introduced in Section \ref{sec: intro GW}, let $\mathcal{GW}$ be the Gromov-Wasserstein space of equivalence classes of networks under weak isomorphism $\sim^w$. Thus, two networks $\mathbb Y$ and $\mathbb Y'$ belong to the same class $[\mathbb Y]$ if and only if $GW(\mathbb Y,\mathbb Y')=0$. In this case,  a GW OT plan $\pi$ realizing $GW(\mathbb Y,\mathbb Y')=0$ is referred to as a \emph{self-coupling}. 
In \cite{chowdhury2020gromov}, the authors provide a \emph{formal tangent structure} on the GW space $\mathcal{GW}$.In particular, they define the \emph{tangent space} of $\mathcal{GW}$ at a network class $[\mathbb Y]$  as the  \emph{quotient space}\footnote{If we consider only the space of classes of mm-spaces or gm-spaces under the strong isomorphism defined in Definition \ref{def: strong iso}, then each $L^2$-space in \eqref{eq: tan2} is restricted to its subspace of symmetric functions.}

\begin{equation}\label{eq: tan2}
  \mathrm{Tan}_{[\mathbb{Y}]}:=\displaystyle\left(\bigcup_{(Y,\omega_Y,\mu_Y)\in[\mathbb{Y}]} L^2(Y\times Y,\mu_Y\otimes \mu_Y)\right)/\simeq
\end{equation}
\emph{under the equivalence relation} $\simeq$ defined as follows: for $(Y,\omega_Y,\mu_Y),(Y',\omega_{Y'},\mu_{Y'})\in[\mathbb{Y}]$,
let $v\in L^2(Y\times Y,\mu_Y\otimes \mu_Y)$, $v'\in L^2(Y'\times Y',\mu_{Y'}\otimes \mu_{Y'})$, 
     we say that $v\simeq v'$ if and only if there is a self-coupling $\pi\in \Pi(\mu_Y,\mu_{Y'})$  such that 

\begin{equation*}
    \omega_{Y}(y_1,y_2)=\omega_{Y'}(y_1',y_2') \quad \text{ and } \quad v(y_1,y_2)=v'(y_1',y_2') \qquad   a.e. \,  \pi(y_1,y_1') \pi(y_2,y_2').
\end{equation*}
We denote the class of $v$ as $[v]$. Each  space $\mathrm{Tan}_{[\mathbb{Y}]}$ is equipped with the inner product

\begin{equation*}\label{eq: inner product}
    \langle [v],[v']\rangle_{\mathrm{Tan}_{[\mathbb{Y}]}}:=\langle(\varphi_Y)^*v,(\varphi_{Y'})^*v'\rangle_{L^2(Z\times Z,\mu_Z\otimes \mu_Z)},
\end{equation*}
where the measure space
$(Z,\mu_Z)$ and the functions $\varphi_{Y}:Z\to Y$, $\varphi_{Y'}:Z\to Y'$ are as in Definition \ref{def: weak iso} of weak isomorphism between $\mathbb Y$ and $\mathbb Y'$ (i.e., a \emph{common resolution}).
Notice that, when considering finite networks $(Y=\{y_j\}_{j=1}^M,\omega_Y,\mu_Y=\sum_{j=}^M\q_j\delta_{y_j})$, the $L^2$-space $L^2(Y\times Y,\mu_Y\otimes\mu_Y)$ can be identified with  $\R^{M\times M}$ equipped with an inner product induced by a weighted version of the Frobenius norm:

\begin{equation}\label{eq: inner product = trace}
  \langle A, B\rangle_{L^2(Y\times Y,\mu_Y\otimes \mu_Y)}=\sum_{i,j\in [M]}A_{ij}B_{ij}\q_i\q_j=:\mathrm{ tr}_{\q}(A^TB), 
\end{equation}
where $\mathrm{ tr}_{\q}$ denotes the trace inner product weighted by $\q\otimes \q\in \R^{M\times M}$.{To ensure that this defines a norm, we assume that $\q_j > 0$ for all $j \in [M]$, meaning that the support of $\mu_Y$ consists precisely of the $M$ distinct points $\{y_j\}_{j=1}^M$, as required in Definition \ref{def: mm gm net}.

The \emph{exponential} map $Exp_{[\mathbb Y]}: \mathrm{Tan}_{[\mathbb{Y}]} \longrightarrow \mathcal{GM}$ is defined as:

\begin{equation}\label{eq: exp}
    [v]\longmapsto [(Y,\omega_Y+v,\mu_Y)]  \qquad \text{ for } v\in L^2(Y\times Y,\mu_Y\otimes \mu_Y).
\end{equation}
Using our shorthand notation $(\mathbf Y, \q)\in \R^{M\times M}\times \mathcal{P}_M$ to denote a representative of $[\mathbb Y]$, \eqref{eq: exp} can be identified with the map $v\mapsto (\mathbf Y+v,\q)$ for all $v\in \R^{M\times M}$. 

\begin{definition}[\cite{chowdhury2019gromov}]  \label{def: grad}
    Given a functional over the GW space, $G:\mathcal{GW}\to \R$, its  \emph{directional derivative} in the direction of $[v]\in \mathrm{Tan}_{[\mathbb Y]}$ is defined by

\begin{equation}\label{eq: dir deriv}
    D_vG([\mathbb Y]):=\lim_{t\searrow 0}\frac{G(Exp_{[\mathbb Y]}([tv]))-G([\mathbb Y])}{t},
\end{equation}
and its \emph{gradient}\footnote{If $G:\mathcal{GW}\to \R$ is differentiable, i.e, if all the directional derivatives \eqref{eq: dir deriv} exist, and if, , its gradient at $[\mathbb Y]$, as defined via \emph{Riesz representation} \eqref{eq: grad def}, exists, then it is unique (see \cite[Lem. 7]{chowdhury2020gromov} and the references therein).
} by a vector $\nabla G([\mathbb Y])\in \mathrm{Tan}_{[\mathbb Y]}$ satisfying 
\begin{equation}\label{eq: grad def}
     D_vG([\mathbb Y])=\langle [v],\nabla G([\mathbb Y])\rangle_{\mathrm{Tan}_{[\mathbb Y]}}, \qquad \forall [v]\in \mathrm{Tan}_{[\mathbb Y]}.
\end{equation}
\end{definition}

\subsection{Blow-Ups}\label{sec: bu}
In \cite{chowdhury2020gromov}, particular representatives of the equivalence classes under weak isomorphisms were considered using a \emph{blow-up} technique, which we will describe. 
This blow-up idea avoids the need to fix a size $M$ in advance, as required in the approach of \cite{peyre2016gromov} and reviewed in this paper when studying the GW synthesis problem  \eqref{eq: bary space for M and q fixed} in Section \ref{sec: Synthesis}.

\begin{SCfigure}[5][ht!]
    \centering
    \vspace{0.3in}
    \includegraphics[width=0.25\linewidth]{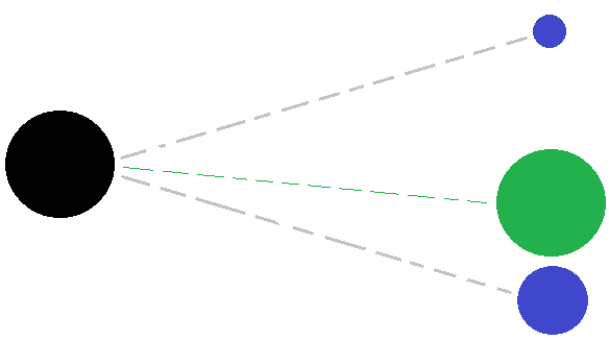}
   \caption{\small{Sketch to visualize the classical \emph{barycentric projection}. Gray dashed lines indicate the optimal transport plan $\pi$ between the source measure $\mu$ (black) and the target measure $\nu$ (blue). In general, if $\mu=\sum_i^n a_i \delta_{x_i}$ and $\nu=\sum_{j}^mb_j\delta_{y_j}$, then the barycentric projection measure (green) is supported at $\widehat{y_i}:=\frac{1}{a_i}\sum_{j}\pi_{ij}y_j$ and the mass at $\widehat{y_i}$
 coincides with that of
    $x_i$ for all $i\in[n]$, thus, the optimal transport between the source measure and its barycentric projection is induced by a map (green dashed lines).}}
    \label{fig:barycentric proj}    
\end{SCfigure}

In a broader sense, this idea is closely related to the notion of \emph{barycentric projection} widely used in linear OT \cite{wang2013linear} to recover transport maps from OT plans. 
While the barycentric projection effectively ``collapses'' mass (see Fig. \ref{fig:barycentric proj}), we will see that the blow-up technique instead ``multiplies'' the number of nodes (cf. Fig. \ref{fig: bu} below). The parallelism is that, in both cases, one is led to consider optimal couplings that are induced by maps after obtaining distributions with the same number of particles and mass.
For the GW case, we refer to the relevant work \cite{beier2022linear}. There, a single reference mm-space is fixed, and each object is embedded into a common linear space via a GW optimal coupling followed by a barycentric projection. This construction yields a local linear model of GW space and provides a computationally efficient approximation of the inherently nonlinear GW geometry. The focus of \cite{beier2022linear} is on defining a linear embedding of GW space and computing a single GW optimal coupling to a fixed reference, followed by a barycentric projection. Instead of embedding, we rely on the notion of weakly isomorphic networks: we consider multiple references and utilize the blow-up technique (which requires multiple optimal couplings) to obtain new weakly isomorphic networks with which we can compute ``sums'' and, moreover, ``weighted averages''.

\begin{definition}[\cite{chowdhury2019gromov, chowdhury2020gromov}]  \label{def: blowup} Consider 
$(X=\{x_i\}_{i=1}^N,\omega_X,\mu_X)$, $(Y=\{y_j\}_{j=1}^M,\omega_Y,\mu_Y)$ finite networks,
which we simply denote as
$(\mathbf X,\p)\in \R^{N\times N}\times \mathcal{P}_N$, $(\mathbf Y,\q)\in \R^{M\times M}\times \mathcal{P}_M$ (we recall the notation $\omega_X(x_i,x_{i'})=\mathbf X[i,i']$,  $\omega_Y(y_j,y_{j'})=\mathbf Y[j,j']$,  $\mu_X=\sum_{i=1}^N\p[i]\delta_{x_i}$, $\mu_Y=\sum_{i=1}^M\q[j]\delta_{y_j}$). 
Let $\pi\in \Pi(\p,\q)$ be a coupling between  $\p$ and $\q$. Consider the vectors  
$u({X})=(u(x_1),\dots u(x_N))$ and $v({Y})=(v(y_1),\dots, v(y_M))$ given by
\begin{equation*}
  u(x_i):=\left|\{j\in[M]\mid \, \pi(x_i,y_j)>0\}\right|\quad \text{and} \quad v(y_j):=\left|\{i\in[N]\mid  \, \pi(x_i,y_j)>0\}\right|.
\end{equation*}
We define:
\begin{itemize}[leftmargin=*]
    \item New sets of nodes

    \begin{equation*}
        X_b:=\bigcup_{i\in[N], \,  1\leq k_i\leq u(x_i)}(x_i,k_i)\qquad \text{and} \qquad Y_b:=\bigcup_{j\in[M], \, 1\leq l_j\leq v(y_j)}(y_j,l_j).
\end{equation*}
\item New probability vectors supported on such nodes

\begin{equation*}
  \p_b [i,k_i]:=\pi(\{x_i,y_{k_i}\}) \qquad\text{and} \qquad  \q_b [j,l_j]:=\pi(\{x_{l_j},y_j\}),  
\end{equation*}
along with new probability measures $\mu_{X_b}:=\sum \p_b[i,k_i]\delta_{(x_i,k_i)}$, $\mu_{Y_b}:=\sum \q_b[j,l_j]\delta_{(y_j,l_j)}$.
\item New weight functions $\omega_{X_b}$, $\omega_{Y_b}$, represented by the following matrices 

\begin{equation*}
  \mathbf X_b[(i,k_i),(i',k_{i'})]:=\mathbf X[i,i'] \qquad \text{and} \qquad \mathbf Y_b[(j,l_j),(j',l_{j'})]:=\mathbf Y[j,j'].
\end{equation*}
\end{itemize}

The new spaces $\mathbb X_b=(X_b,\omega_{X_b},\mu_{X_b})$ and 
$\mathbb Y_b=(Y_b,\omega_{Y_b},\mu_{Y_b})$
are called a \emph{blow-up} of the original networks ${\mathbb X}=({X},{\omega_X},{\mu_X})$ and 
${\mathbb Y}=({Y},{\omega_Y},{\mu_Y})$.
\end{definition}

Let $\mathbb X$ and $\mathbb Y$ be finite networks with $N$ and $M$ nodes, respectively, and 
$\mathbb X_b=(X_b,\omega_{X_b},\mu_{X_b})$ and 
$\mathbb Y_b=(Y_b,\omega_{Y_b},\mu_{Y_b})$, simply denoted by $(\mathbf X_b, \p_b)$ and $(\mathbf Y_b,\q_b)$, be as in Definition \ref{def: blowup} with $\pi\in \Pi(\p,\q)$ an $N\times M$ \emph{optimal} coupling for $GW(\mathbb X,\mathbb Y)$. Then, we have the following properties:

\begin{enumerate}[leftmargin=*]
    \item $\mathbb X_b\sim^w\mathbb X$ and $\mathbb Y_b\sim^w\mathbb Y$. Thus, $GW(\mathbb X, \mathbb Y)\notag = GW(\mathbb X_b, \mathbb Y_b)$.
    \smallskip
    
    \item Same size: $|X_b|=|\mathrm{supp}{(\pi)}|=|Y_b|$. We denote this number by $M_b$.
    
    \smallskip
    
    \item Alignment:  One can construct a (possibly enlarged) optimal coupling $\pi_b\in \Pi(\p_b,\q_b)$ for $GW(\mathbb X_b, \mathbb Y_b)$ that induces an optimal GW-transport map $T:X_b\to Y_b$, 
    which is a bijection, i.e.,  a permutation of the {indices} in $[M_b]$ (see \cite{dumont2024existence} for existence of optimal GW maps in the context of mm-spaces under strong isomorphisms). 
    As highlighted in \cite{chowdhury2020gromov}, the crux of the blow-up construction relies on the observation that while $\mathbb X_b$ and $\mathbb Y_b$ maintain weak isomorphisms with $\mathbb X$ and $\mathbb Y$, respectively, the initial (optimal) transport plan $\pi$ between the original measures $\mu_X,\mu_Y$  supported on the original nodes $X,Y$ (represented the vectors $\p,\q$),
lifts to a
 map ${T}$ between the new sets of nodes $X_b, Y_b$ such that $T_\#\mu_{X_b}=\mu_{Y_b}$:

\begin{equation*}
\xymatrix{
\mathbb X \ar[r]^{\qquad \text{ plan } \qquad } \ar[d]_{\sim^w} 
& \mathbb Y \ar[d]^{\sim^w} \\
\mathbb X_b \ar[r]^{\qquad \text{ map } \qquad} 
& \mathbb Y_b 
}    
\end{equation*}    
    For simplicity, after permuting and relabeling the nodes, if necessary, we will assume that $T$ is the identity map (i.e., we  identify $X_b=Y_b$). Consequently,  $\p_b=\q_b$, and $\pi_b$ can be regarded as the diagonal matrix with $\q_b$ as its diagonal entries, denoted as $\pi_b=\mathrm{diag}(\q_b)$. See Fig. \ref{fig: bu} for an illustration.
    Moreover, 

\begin{equation}\label{eq: GW as Frob}
    GW^2(\mathbb X, \mathbb Y) = \sum_{j,k\in [M_b]}|\mathbf X_b[j,k]- \mathbf Y_b[j,k]|^2 \, \q_b[j]\q_b[k]=\mathrm{tr}_{\q_b}\left((\mathbf X_b -\mathbf Y_b)^T(\mathbf X_b -\mathbf Y_b)\right).
\end{equation}
Expression \eqref{eq: GW as Frob} shows that, after blow-up, the GW distance 
becomes a Frobenius norm with weight depending on an optimal GW assignment between the pair of networks. 

\end{enumerate}

\begin{figure}[ht!]
    \centering

    \includegraphics[width=0.9\linewidth]{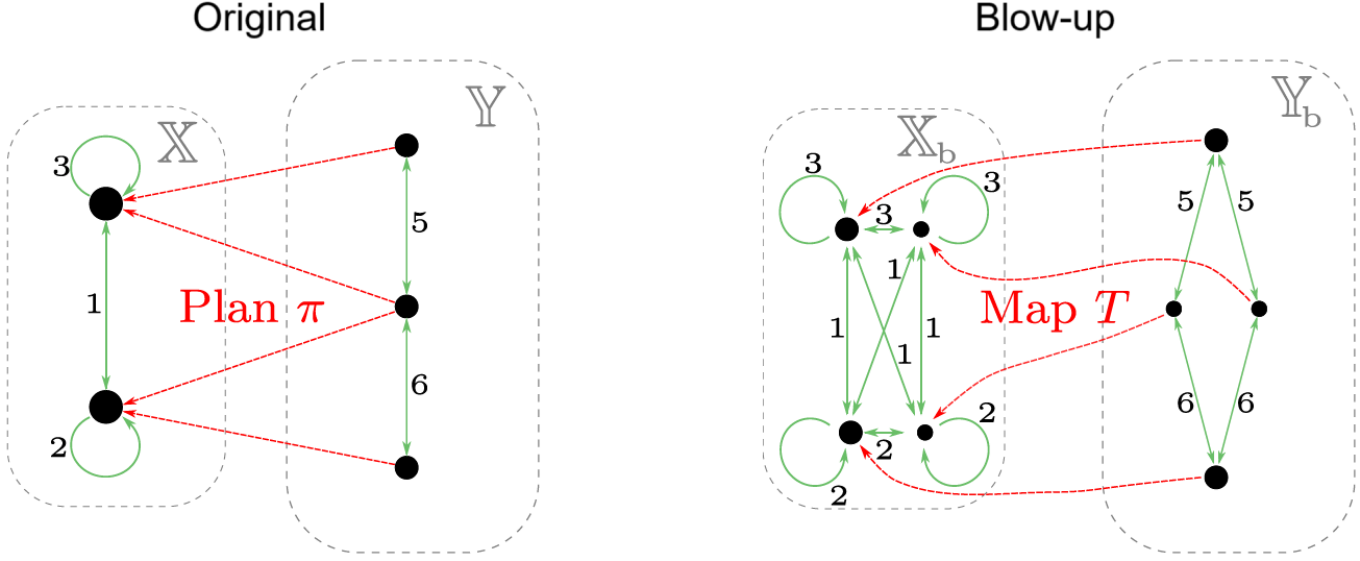}

    \caption{\small{Blow-up illustration.}}
    \label{fig: bu}

\end{figure}

\noindent If we consider multiple templates $\{[\mathbb X^s]\}_{s=1}^S$ for $S\geq 1$, given a class $[\mathbb Y]$, 
Algorithm \ref{alg: blow up} is a \emph{general blow-up}, which facilitates the alignment of each template representative $\mathbb X^s = (X^s, \omega_{X^s}, \mu_{X^s})$ with a representative $\mathbb Y = (Y, \omega_Y, \mu_Y)$, ensuring that the new spaces $Y_b={X^1_b}=\dots= {X^S_b}$  all contain the same number of nodes ($M_b$), share the same mass distribution $\q_b$ across the nodes, and the identity map is optimal for each GW problem between $\mathbb X^s_b$ and $\mathbb Y_b$ ($s\in [S]$). Note that the optimal plans between $\mathbb X^s_b, \mathbb X^r_b $, for $s\neq r$, are not necessarily the identity. The network $\mathbb Y$ plays a special role in this algorithm, and thus when we apply it to templates $\{[\mathbb X^s]\}_{s=1}^S$, we say that we perform \emph{an optimal blow-up with respect to $\mathbb Y$}.

\begin{algorithm}[H]
   \caption{An Optimal Blow-Up of Multiple Templates With Respect to a Fixed Network}
   \label{alg: blow up}
   \begin{algorithmic}[1]
   \STATE {\bfseries \underline{Inputs:}}\\
   - Templates $\{(\mathbf{X}^s,\p^s)\in \R^{N^s\times N^s}\times \mathcal{P}_{N^s}\}_{s\in [S]}$\\ 
   - Network $(\mathbf Y,\q)\in \mathbb R^{M\times M}\times\mathcal{P}_M$
   \smallskip

   \STATE {\bfseries \underline{Output:}} \\
    - Blow-up size $M_b\in \N$\\
    - Blow-up template matrices  $\{\mathbf{X}^s_b\}_{s\in [S]}$ (all of size $M_b\times M_b$)\\
    - Blow-up network $(\mathbf Y_b,\q_b)\in \mathbb R^{M_b\times M_b}\times\mathcal{P}_{M_b} $

   \smallskip

    \FOR{$s=1,2,\ldots, S$}
        \STATE $\mathbf{X}\gets\mathbf X^s$
        \STATE $\p\gets \p^s$
        \STATE $\pi$ $\gets$ optimal plan for $GW((\mathbf{X},\p),(\mathbf Y,\q))$ \\
        \STATE $v_X,v_Y\gets $ {indices} of non-null entries of $\pi$ \\
        \STATE $M_b = \mathrm{len}(v_X)$
        \STATE Set $\q_b$ such that $\q_b[l]= \pi[v_X[l],v_Y[l]]$ for all $l\in [M_b]$\\
        \STATE $\q\gets \q_b$\\
        \STATE Set $\mathbf X_b$ such that $\mathbf X_b[i,j] = \mathbf X[v_X[i],v_X[j]]$ for all $i,j\in [M_b]$
        \STATE Set $\mathbf Y_b$ such that $\mathbf Y_b[i,j] = \mathbf Y[v_Y[i],v_Y[j]]$ for all $i,j\in [M_b]$
        \STATE $\mathbf X^s_b \gets \mathbf X_b$
        \STATE $\mathbf Y \gets \mathbf Y_b$
        \FOR{$r=1,2,\ldots s-1$}
            \STATE Set $\mathbf X_b$ such that $\mathbf X_b[i,j] = \mathbf X^r_b[v_Y[i],v_Y[j]]$ for all $i,j\in [M_b]$
            \STATE $\mathbf X^r_b \gets \mathbf X_b$
        \ENDFOR
    \ENDFOR
    \RETURN $M_b$, $\mathbf Y_b,\q_b$,  $\mathbf X^1_b,\dots, \mathbf X^S_b$

\end{algorithmic}
\end{algorithm}

\begin{proposition}[Prop. 8 \cite{chowdhury2020gromov} - Revisited proof in Suppl. Mat. \ref{app: weak}] \label{prop: grad via blow up}
For a fixed family of classes of finite templates $\{[\mathbb X^s]\}_{s=1}^S$ and  $\lambda\in \Delta_{S-1}$, consider the objective functional in \eqref{eq: syn gw} given by

\begin{equation}\label{eq: barycenter functional}
 G_\lambda([\mathbb Y]):=\frac{1}{2}\sum_{s\in[S]}\lambda_sGW^2(\mathbb X^s,\mathbb Y). 
\end{equation}
Then $\nabla G_\lambda([\mathbb Y])$   can be identified, after an optimal blow-up of the templates with respect to $\mathbb Y$, with the square matrix
    $\sum_{s=1}^S\lambda_s{\mathbf X^s_b}-\mathbf Y_b$.
\end{proposition}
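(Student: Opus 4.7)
The plan is to reduce the computation of $\nabla G_\lambda([\mathbb Y])$ to a classical differentiation in a single linear ambient space by invoking the blow-up representation. After performing an optimal blow-up with respect to $\mathbb Y$ of the full template family (Algorithm \ref{alg: blow up}), identity \eqref{eq: GW as Frob} expresses, for every $s \in [S]$,
$$GW^2(\mathbb X^s,\mathbb Y)\;=\;\mathrm{tr}_{\q_b}\bigl((\mathbf X^s_b-\mathbf Y_b)^T(\mathbf X^s_b-\mathbf Y_b)\bigr),$$
so that, in a neighborhood of $[\mathbb Y]$, the functional $G_\lambda$ can be realized as a quadratic functional on $\R^{M_b\times M_b}$ equipped with the inner product \eqref{eq: inner product = trace}. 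This is the right ambient for computing directional derivatives, because all templates $\mathbf X^s_b$ live in the same space as $\mathbf Y_b$ and share the same base probability vector $\q_b$.

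Next, I would compute the directional derivative $D_v G_\lambda([\mathbb Y])$ for an arbitrary $[v]\in\mathrm{Tan}_{[\mathbb Y]}$. Using the representative of $[v]$ compatible with the blow-up yields a matrix $v_b\in\R^{M_b\times M_b}$ such that the curve $t\mapsto Exp_{[\mathbb Y]}([tv])$ is represented by $t\mapsto (\mathbf Y_b+t v_b,\q_b)$, while each $\mathbf X^s_b$ stays fixed. Treating the blow-up data as frozen to first order in $t$, direct differentiation of $\tfrac{1}{2}\sum_s\lambda_s\|\mathbf X^s_b-\mathbf Y_b-tv_b\|_{\q_b}^2$ at $t=0$ gives
$$D_v G_\lambda([\mathbb Y])\;=\;\Bigl\langle \sum_{s=1}^S\lambda_s\mathbf X^s_b-\mathbf Y_b,\; v_b\Bigr\rangle_{\mathrm{Tan}_{[\mathbb Y]}},$$
up to overall sign. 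Matching this pairing against the Riesz identification \eqref{eq: grad def} produces the claimed formula for $\nabla G_\lambda([\mathbb Y])$.

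The main obstacle is the envelope-type step justifying that the blow-up data can be held fixed. The blow-up construction depends on optimal couplings $\pi^s$ realizing $GW(\mathbb X^s,\mathbb Y)$, and these couplings move as $\mathbf Y$ is perturbed by $tv$. The one-sided upper bound for $D_v G_\lambda$ is immediate: plug the unperturbed optimizer $\pi^s_0$ into the GW functional at $\mathbf Y+tv$ (sub-optimal but admissible) and differentiate in $t$. The matching lower bound requires continuity: by compactness of $\Pi(\p^s,\q)$ and joint continuity of the GW quadratic integrand in the matrix entries and the coupling, any limit point $\pi^*$ of optimizers $\pi^s_t$ as $t\searrow 0$ is optimal at $t=0$, which yields the reverse inequality. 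Once this envelope argument is secured, the Riesz representation in $\mathrm{Tan}_{[\mathbb Y]}$ delivers the identification $\nabla G_\lambda([\mathbb Y])\simeq \sum_{s=1}^S\lambda_s\mathbf X^s_b-\mathbf Y_b$, completing the proof.
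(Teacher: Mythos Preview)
Your proposal is correct and follows essentially the same route as the paper: pass to the common blow-up ambient, then sandwich the directional derivative via an upper bound (using the unperturbed optimal coupling as a feasible but suboptimal plan at $t>0$) and a lower bound (expanding at the perturbed optimum and invoking compactness/continuity of optimal couplings as $t\searrow 0$), and conclude by Riesz. The paper additionally makes explicit how an arbitrary representative of $[v]$ is pulled back to the blow-up coordinates via a common-resolution lemma, a detail you subsume in ``the representative of $[v]$ compatible with the blow-up.''
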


\subsection{GW Barycenters via Blow-Up: The Synthesis Problem}\label{sec: synthesis bu} 

In the same spirit as Definition \ref{def: bary space}, we introduce the following 
general and weak forms of GW barycenters. 

\begin{definition}\label{def: bary bu}
Consider a collection of reference classes of finite networks  $\{[\mathbb{X}^s]\}_{s\in [S]}$. The \emph{GW barycenter space} generated by $\{[\mathbb{X}^s]\}_{s\in [S]}$ is defined by:

\begin{equation}\label{eq: bary space very general}
        {\mathrm{Bary}}(\{[\mathbb{X}^s]\}_{s=1}^S):=\left\{[\mathbb{Y}_\lambda] \mid \mathbb{Y}_\lambda \text{ is as in } \eqref{eq: syn gw} \text{ for some } \lambda\in \Delta_{S-1}\right\}.
\end{equation}   
The \emph{weak GW barycenter space} generated by  $\{[\mathbb{X}^s]\}_{s\in [S]}$ is defined by:

    \begin{equation}\label{eq: bary bu}
        \widetilde{\mathrm{Bary}}(\{[\mathbb{X}^s]\}_{s=1}^S):=\left\{[\mathbb{Y}_\lambda] \mid \nabla G_\lambda([\mathbb Y_\lambda])=0 \text{ for some } \lambda\in \Delta_{S-1}\right\}.
\end{equation}    
\end{definition}

In  \eqref{eq: bary space very general} we consider all minimizers $[\mathbb{Y}_\lambda]$ of the general GW synthesis problem \eqref{eq: syn gw}, with $\lambda \in \Delta_{S-1}$, that is, $[\mathbb Y_\lambda]\in \argmin_{[\mathbb Y]} G_\lambda([\mathbb Y])$ (also referred to as \emph{Fr\'echet means}). 
The number of nodes in the networks from the set \eqref{eq: bary space very general}, as well as the distribution of mass among them, is not predetermined in advance, unlike in the set $\mathrm{Bary}_{M,\q}(\{(\mathbf X^s,\p^s)\}_{s=1}^S)$ defined in \eqref{eq: bary space for M and q fixed}.
In contrast, the set given by \eqref{eq: bary bu} consists of the classes $[\mathbb{Y}_\lambda]$ that are critical points of the objective functional in the minimization problem \eqref{eq: syn gw}, i.e., of the functional $G_\lambda$ defined in \eqref{eq: barycenter functional} for some $\lambda \in \Delta_{S-1}$ (also referred to as \emph{Karcher means}).

\begin{lemma}
\label{thm: equiv}
     In the space of finite networks, consider a collection of templates $\{\mathbb{X}^s\}_{s \in [S]}$, and another network $\mathbb Y^*$.
    Let $\mathbb X_b^s=(\mathbf X^s_b, \q_b)$, for  $s\in [S]$, and $\mathbb Y^*_b=(\mathbf Y_b^*, \q_b)$ be an optimal blow-up of $\{\mathbb{X}^s\}_{s \in [S]}$ with respect to $\mathbb Y^*$ as in Algorithm \ref{alg: blow up}. 
    \begin{itemize}[leftmargin=*]       
        \item  If $[\mathbb Y^*]\in \widetilde{\mathrm{Bary}}(\{[\mathbb{X}^s]\}_{s=1}^S)$, then $\mathbb Y^*\sim^w \left(\sum_{s=1}^S\lambda_s \mathbf X^s_b,\q_b\right)$ for some vector $\lambda \in \Delta_{S-1}$.
        \item If $[\mathbb Y^*]\in {\mathrm{Bary}}(\{[\mathbb{X}^s]\}_{s=1}^S)$, then $\mathbb Y^*\sim^w \left(\sum_{s=1}^S\lambda_s \mathbf X^s_b,\q_b\right)$ for some vector $\lambda \in \Delta_{S-1}$.
    \end{itemize}
\end{lemma}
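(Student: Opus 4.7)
The plan is to leverage Proposition \ref{prop: grad via blow up}, which furnishes a concrete representative of the gradient $\nabla G_\lambda([\mathbb Y^*])$ after performing an optimal blow-up of the templates with respect to $\mathbb Y^*$. Once this representative is identified with the matrix $\sum_{s=1}^S \lambda_s \mathbf X^s_b - \mathbf Y^*_b$, both statements reduce to translating a gradient-vanishing or first-order-optimality condition into the weak isomorphism $\mathbb Y^* \sim^w \bigl(\sum_{s=1}^S \lambda_s \mathbf X^s_b, \q_b\bigr)$ by applying the exponential map from \eqref{eq: exp}.

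For the first statement, suppose $[\mathbb Y^*] \in \widetilde{\mathrm{Bary}}(\{[\mathbb X^s]\}_{s=1}^S)$. By definition \eqref{eq: bary bu}, there exists $\lambda \in \Delta_{S-1}$ with $\nabla G_\lambda([\mathbb Y^*]) = 0$ in $\mathrm{Tan}_{[\mathbb Y^*]}$. By Proposition \ref{prop: grad via blow up}, the gradient is represented by $\sum_{s=1}^S \lambda_s \mathbf X^s_b - \mathbf Y^*_b$, so this matrix lies in the zero equivalence class of $\mathrm{Tan}_{[\mathbb Y^*_b]}$. Applying the exponential map \eqref{eq: exp} then yields
\[
Exp_{[\mathbb Y^*_b]}\!\Bigl(\sum_{s=1}^S \lambda_s \mathbf X^s_b - \mathbf Y^*_b\Bigr) = \Bigl[\Bigl(\sum_{s=1}^S \lambda_s \mathbf X^s_b,\, \q_b\Bigr)\Bigr],
\]
while $Exp_{[\mathbb Y^*_b]}(0) = [\mathbb Y^*_b] = [\mathbb Y^*]$. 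Since both expressions apply $Exp$ to the same (zero) tangent class, they coincide, giving the asserted weak isomorphism.

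For the second statement, I would show $\mathrm{Bary}(\{[\mathbb X^s]\}_{s=1}^S) \subseteq \widetilde{\mathrm{Bary}}(\{[\mathbb X^s]\}_{s=1}^S)$ and invoke the first statement. If $[\mathbb Y^*]$ globally minimizes $G_\lambda$, then for every direction $[v] \in \mathrm{Tan}_{[\mathbb Y^*]}$ the comparison of $G_\lambda(Exp_{[\mathbb Y^*]}(tv))$ with $G_\lambda([\mathbb Y^*])$ as $t \searrow 0$ gives $D_v G_\lambda([\mathbb Y^*]) \geq 0$. Applying the same argument to $[-v]$, together with the Riesz characterization $D_v G_\lambda([\mathbb Y^*]) = \langle [v],\nabla G_\lambda([\mathbb Y^*])\rangle_{\mathrm{Tan}_{[\mathbb Y^*]}}$, forces $\langle [v],\nabla G_\lambda([\mathbb Y^*])\rangle = 0$ for every $[v]$, hence $\nabla G_\lambda([\mathbb Y^*]) = 0$ and $[\mathbb Y^*] \in \widetilde{\mathrm{Bary}}$.

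The main obstacle is the step that passes from ``$[v] = 0$ in $\mathrm{Tan}_{[\mathbb Y^*_b]}$'' to ``$Exp_{[\mathbb Y^*_b]}([v]) = [\mathbb Y^*_b]$''. Unpacking the quotient \eqref{eq: tan2}, $[v] = 0$ means there exists a self-coupling $\pi$ of $\mathbb Y^*_b$ along which $v$ vanishes $\pi\otimes\pi$-almost everywhere. One must verify that adding such a $v$ to $\omega_{Y^*_b}$ yields a network weakly isomorphic to $\mathbb Y^*_b$ via the very same self-coupling; this is handled by taking the common resolution $(Z,\mu_Z)$ supplied by $\pi$ and checking that the pullbacks of $\omega_{Y^*_b}$ and $\omega_{Y^*_b}+v$ agree $\mu_Z\otimes\mu_Z$-almost everywhere. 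A secondary technical point is the bilinearity of the inner product on the quotient tangent space, needed to pass from $[v]$ to $[-v]$ in the minimizer argument, which follows directly from the $L^2$-based construction reviewed in Section \ref{sec: tan}.
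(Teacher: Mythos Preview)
Your proposal is correct and follows essentially the same approach as the paper. For the first bullet you invoke Proposition~\ref{prop: grad via blow up} and the exponential map exactly as the paper does (the paper's main proof is terser, treating the identification $\nabla G_\lambda=\sum_s\lambda_s\mathbf X^s_b-\mathbf Y^*_b$ literally, while the version you write out via $Exp$ matches the paper's Remark~\ref{rem: remark trivial}); for the second bullet you reduce to the first via first-order optimality, as the paper does by citing Remark~\ref{rem: first order calc}, with only a cosmetic difference in how the gradient is shown to vanish (you use $[v]$ and $[-v]$, the paper plugs in $[v]=-\nabla G_\lambda$ directly). The paper additionally supplies in Appendix~\ref{app: weak} a direct proof of the second bullet that bypasses first-order optimality entirely, instead mimicking the fixed-point argument of Theorem~\ref{prop: formula_bary}; this is worth noting as an alternative route, but your approach matches the paper's primary proof.
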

\begin{proof}
    If $[\mathbb Y^*]\in \widetilde{\mathrm{Bary}}(\{[\mathbb{X}^s]\}_{s=1}^S)$, then, using Proposition \ref{prop: grad via blow up}, we have $0=\nabla G_\lambda[\mathbb Y^*]=\sum_{s=1}^S\lambda_s \mathbf X^s_b-\mathbf Y_b^*$. Thus, $\mathbb Y^*\sim^w(\mathbf Y_b^*,\q_b)=\left(\sum_{s=1}^S\lambda_s \mathbf X^s_b,\q_b\right)$, for some vector $\lambda \in \Delta_{S-1}$. 

    If $[\mathbb Y^*]\in {\mathrm{Bary}}(\{[\mathbb{X}^s]\}_{s=1}^S)$ then, {due to first-order optimality conditions \cite{chowdhury2020gromov}} (see Remark \ref{rem: first order calc} in Suppl. Mat. \ref{app: weak}), $[\mathbb Y^*]\in \widetilde{\mathrm{Bary}}(\{[\mathbb{X}^s]\}_{s=1}^S)$.  Hence, $\mathbb Y^*\sim^w\left(\sum_{s=1}^S\lambda_s \mathbf X^s_b,\q_b\right)$. 
\end{proof}

In Suppl. Mat. \ref{app: weak}, we provide a direct proof of the last bullet point in Lemma \ref{thm: equiv} that does not rely on the first-order optimality condition, but instead builds on the fixed-point approach following the ideas of the proof of Theorem \ref{prop: formula_bary}.

In Lemma \ref{thm: equiv}, the blow-up technique can be interpreted as a \emph{`change of coordinates'}, in which all nodes are re-aligned, relabeled, or reordered with respect to those of a GW barycenter $\mathbb Y^*$. Once this alignment is established, the new templates (which are weakly isomorphic to the original ones) have the same number of nodes and share a common mass distribution $\mathbf{q}_b$. The GW barycenter formula (for the weight matrix) will then take the form $\sum_{s=1}^S\lambda_s \mathbf X^s_b$, which resembles the classical Euclidean barycenter {(see \eqref{eq: bary ad convex comb2} in Suppl. Mat. \ref{sec: BCM})}. This may indicate that the GW framework corresponds to the Euclidean geometry on the space of real square matrices of fixed size ($M_b \times M_b$).
However, there are many caveats. Appropriate mass must be distributed at the nodes of the network $\left(\sum_{s=1}^S\lambda_s \mathbf X^s_b,\q_b\right)$, namely the probability vector $\mathbf{q}_b$, which does not only depend on  the templates but also on $\mathbb{Y}^*$. Also, the order or labels of the coordinates of $\sum_{s=1}^S\lambda_s \mathbf X^s_b$ depend on the blow-up template matrices $\{\mathbf X_b^s\}_{s\in [S]}$, which, in turn, are determined by optimal GW assignments with respect to $\mathbb Y^*$. Moreover, if a new vector $\lambda \in \Delta_{S-1}$ is chosen, the formula of the barycenter does no longer apply and new blow-ups of the templates must be taken respect to a new $\mathbb Y^*$. For further discussion, see Remark \ref{rem: remark trivial} in Suppl. Mat. \ref{app: weak}.  We also refer the reader to Suppl. Mat. \ref{app: GW Was} for an additional discussion on the parallels and differences between Wasserstein and GW barycenters, as well as the methods used to compute them.

\subsection{GW Barycenters via Blow-Up: The Analysis Problem}\label{sec: analysis bu}

As in Theorem \ref{thm: analysis}, where we used the characterization of elements in $\mathrm{Bary}_{M,\q}(\{(\mathbf X^s,\p^s)\}_{s\in [S]})$ given by Theorem \ref{prop: formula_bary} to derive a method that exactly recovers the barycentric coordinates $\lambda$--leading to Algorithm \ref{alg: analysis} which also applies beyond strict barycenter inputs--we now use \ref{prop: Karcher} to derive a method that exactly recovers the barycentric coordinates $\lambda$ for elements in $\widetilde{\mathrm{Bary}}(\{[\mathbb X^s]\}_{s\in [S]})$, 
and which likewise extends beyond exact barycenter inputs, giving rise to a subsequent procedure described in Algorithm \ref{alg: analysis blow up}.

\begin{proposition}\label{prop: Karcher}
    Let  $\{\mathbb X^s\}_{s=1}^S$ be finite templates and let $\lambda\in \Delta_{S-1}$.  Let $\mathbb Y$ be a finite network class. Consider the blow-up matrices  $\mathbf X^1_b,\dots, \mathbf X^S_b$ and network $(\mathbf Y_b,\q_b)$ obtained  with  Algorithm \ref{alg: blow up}. 
    Then, 
$\|\nabla G_\lambda([\mathbb Y])\|_{\mathrm{Tan}_{[\mathbb Y]}}^2=\lambda^T\mathcal{A}\lambda,$
    where 

    \begin{equation}\label{eq: mat A}
        \mathcal{A}_{sr}:=\mathrm{tr}_{\q_b}\left((\mathbf X^s_b-\mathbf Y_b)^T(\mathbf X^r_b-\mathbf Y_b)\right), \qquad \forall s,r\in [S].
\end{equation}
\end{proposition}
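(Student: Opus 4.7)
The plan is to combine Proposition \ref{prop: grad via blow up} with the identification of the tangent-space inner product with the weighted Frobenius inner product $\mathrm{tr}_{\q_b}$ introduced in \eqref{eq: inner product = trace}. First, I would invoke Proposition \ref{prop: grad via blow up} to identify a representative of $\nabla G_\lambda([\mathbb Y])$ in the tangent space, after the optimal blow-up with respect to $\mathbb Y$ produced by Algorithm \ref{alg: blow up}, with the $M_b \times M_b$ matrix
\begin{equation*}
\nabla G_\lambda([\mathbb Y]) \;=\; \sum_{s=1}^{S} \lambda_s\, \mathbf X^s_b - \mathbf Y_b.
\end{equation*}

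Next, using the fact that $\lambda \in \Delta_{S-1}$ satisfies $\sum_{s=1}^{S} \lambda_s = 1$, I would rewrite this representative as
\begin{equation*}
\sum_{s=1}^{S} \lambda_s\, \mathbf X^s_b - \mathbf Y_b \;=\; \sum_{s=1}^{S} \lambda_s \bigl(\mathbf X^s_b - \mathbf Y_b\bigr),
\end{equation*}
which conveniently exhibits the gradient as a $\lambda$-linear combination of the matrices $\mathbf X^s_b - \mathbf Y_b$ that define the entries of $\mathcal{A}$.

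Then, since the blown-up network $(\mathbf Y_b, \q_b)$ is a representative of $[\mathbb Y]$, the tangent space $\mathrm{Tan}_{[\mathbb Y]}$ is equipped with the inner product that on $\R^{M_b \times M_b}$ is given by $\langle A, B\rangle = \mathrm{tr}_{\q_b}(A^T B)$, as in \eqref{eq: inner product = trace}. Expanding the squared norm bilinearly gives
\begin{equation*}
\|\nabla G_\lambda([\mathbb Y])\|_{\mathrm{Tan}_{[\mathbb Y]}}^2
= \sum_{s,r\in[S]} \lambda_s \lambda_r\, \mathrm{tr}_{\q_b}\!\bigl((\mathbf X^s_b - \mathbf Y_b)^T (\mathbf X^r_b - \mathbf Y_b)\bigr)
= \lambda^T \mathcal{A}\, \lambda,
\end{equation*}
with $\mathcal{A}$ as in \eqref{eq: mat A}. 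The main subtlety to check is that the representative given by Proposition \ref{prop: grad via blow up} really does yield the correct norm in $\mathrm{Tan}_{[\mathbb Y]}$, i.e., that the inner product descends consistently to the quotient under the self-coupling equivalence relation and coincides with $\mathrm{tr}_{\q_b}$ on this particular representative, which is precisely the content of the construction recalled in Section \ref{sec: tan} together with the identity $\pi_b = \mathrm{diag}(\q_b)$ valid after the blow-up alignment.
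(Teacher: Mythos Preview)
Your proposal is correct and follows essentially the same approach as the paper: invoke Proposition \ref{prop: grad via blow up} to obtain the matrix representative $\sum_{s}\lambda_s\mathbf X^s_b-\mathbf Y_b$, identify the tangent-space norm with $\mathrm{tr}_{\q_b}$ via \eqref{eq: inner product = trace}, and expand bilinearly using $\sum_s\lambda_s=1$ to reach $\lambda^T\mathcal A\lambda$. The paper compresses your explicit rewriting $\sum_s\lambda_s\mathbf X^s_b-\mathbf Y_b=\sum_s\lambda_s(\mathbf X^s_b-\mathbf Y_b)$ into a single display, but the argument is the same.
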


\begin{proof}
    This proof is inspired by the analysis in \cite{werenski2022measure}, and a consequence of Proposition \ref{prop: grad via blow up}.  Since the networks involved in the statement have finitely many nodes, we can use \eqref{eq: inner product = trace} to express the inner product in  the tangent space $\mathrm{Tan}_{[\mathbb Y]}$ as a weighted trace of matrices. Indeed, after blow-up  all the matrices have the same size $M_b\times M_b$. Hence, we can consider the variant of Frobenius norm in the space of $M_b\times M_b$ square matrices weighted by $\q_b\otimes \q_b$:

\begin{align*}
    \|\nabla G_\lambda([\mathbb Y])\|_{\mathrm{Tan}_{[\mathbb Y]}}^2 &=\mathrm{tr}_{\q_b}\left(\left(\sum_{s=1}^S\lambda_s\mathbf X^s_b-\mathbf Y_b\right)^T\left(\sum_{r=1}^S\lambda_r\mathbf X^r_b-\mathbf Y_b\right)\right)\\ 
    &=\sum_{s,r=1}^S\lambda_s\lambda_r\underbrace{\mathrm{tr}_{\q_b}\left(  (\mathbf X^s_b-\mathbf Y_b)^T  (\mathbf X^r_b-\mathbf Y_b)\right)}_{\mathcal \mathcal \mathcal A_{sr}}\\[-6pt]
    &=\lambda^T \mathcal{A} \lambda.
\end{align*}
\end{proof}

\begin{theorem}\label{prop: gradient method} 
Let $\{[\mathbb X^s]\}_{s\in[S]}$ be classes of finite networks. Given a finite class $[\mathbb Y]$, consider a blow-up $(\mathbf Y_b,\q_b)$,  $\mathbf X^1_b,\dots, \mathbf X^S_b$ as in Algorithm \ref{alg: blow up}, and
let $\mathcal A$ be the $S\times S$ matrix given by the expression \eqref{eq: mat A} in Proposition \ref{prop: Karcher}.
Then  $[\mathbb Y]\in \widetilde{\mathrm{Bary}}(\{[\mathbb X^s]\}_{s\in [S]})$ if and only if 

\begin{equation}\label{eq: blowup analysis prob}
  \min_{\lambda\in \Delta_{S-1}}\lambda^T \mathcal{A} \lambda = 0,  
\end{equation}
and the solution argument $\lambda_{\mathbb Y}\in\Delta_{S-1}$ is the corresponding weight vector to $[\mathbb Y]$ as a weak GW barycenter in the space given in \eqref{eq: bary bu}.
Equivalently, 
$\lambda_{\mathbb Y}$
   solves the linear  problem

\begin{equation}\label{eq: Llambda=c}
    {L \, \lambda = c} \qquad \text{ subject to } \lambda\in \Delta_{S-1}\subset \R^S,
\end{equation}
where $L\in \R^{S\times S}$ and $c\in \R^S$ are given by

\begin{equation}\label{eq: L and c}
    L_{sr}:= \mathrm{tr}_{\q_b}\left((\mathbf X^s_b)^T \, \mathbf X^r_b\right) \quad \forall s,r\in[S], \quad 
    c_s:=\mathrm{tr}_{\q_b}\left( \mathbf Y^T_b \, \mathbf X^s_b \right) \quad \forall s\in [S].
\end{equation}
\end{theorem}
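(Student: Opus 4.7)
The plan is to mirror the strategy of Theorem~\ref{thm: analysis}, with Proposition~\ref{prop: Karcher} now playing the role of the explicit formula \eqref{eq: bary_fix_pi}. The central identity $\|\nabla G_\lambda([\mathbb Y])\|^2_{\mathrm{Tan}_{[\mathbb Y]}} = \lambda^T \mathcal A \lambda$ translates the Karcher/critical-point condition defining $\widetilde{\mathrm{Bary}}$ directly into an algebraic statement about the quadratic form $\lambda \mapsto \lambda^T \mathcal A \lambda$. I would first observe that $\mathcal A$ is the Gram matrix of $\{\mathbf X^s_b - \mathbf Y_b\}_{s=1}^S$ under the weighted inner product $\mathrm{tr}_{\q_b}(\cdot^T\cdot)$, so it is symmetric positive semidefinite, and hence $\lambda^T\mathcal A\lambda \geq 0$ for every $\lambda \in \R^S$. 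Combining this with \eqref{eq: bary bu} yields the chain: $[\mathbb Y]\in \widetilde{\mathrm{Bary}}$ if and only if there exists $\lambda\in\Delta_{S-1}$ with $\nabla G_\lambda([\mathbb Y])=0$, if and only if there exists $\lambda\in \Delta_{S-1}$ with $\lambda^T\mathcal A \lambda=0$, if and only if $\min_{\lambda\in \Delta_{S-1}}\lambda^T \mathcal A \lambda = 0$; moreover the set of minimizers coincides with the set of weight vectors realizing $[\mathbb Y]$ as a weak GW barycenter.

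For the equivalence with the linear system $L\lambda = c$, I would expand $\mathcal A_{sr} = L_{sr} - c_s - c_r + \mathrm{tr}_{\q_b}(\mathbf Y_b^T\mathbf Y_b)$, using the symmetry of $\mathrm{tr}_{\q_b}(\cdot^T\cdot)$. This gives, on $\Delta_{S-1}$, the identity $\lambda^T\mathcal A\lambda = \lambda^T L\lambda - 2\lambda^T c + \mathrm{tr}_{\q_b}(\mathbf Y_b^T\mathbf Y_b)$. The direct implication (a weight solves $L\lambda=c$) is immediate from Proposition~\ref{prop: grad via blow up}: at any barycentric weight $\lambda_{\mathbb Y}$, the gradient $\sum_s \lambda_{\mathbb Y,s}\mathbf X^s_b - \mathbf Y_b$ vanishes in the tangent space, so pairing with each $\mathbf X^r_b$ via $\mathrm{tr}_{\q_b}(\cdot^T\cdot)$ yields $(L\lambda_{\mathbb Y})_r = c_r$. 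For the converse, given any $\lambda\in\Delta_{S-1}$ satisfying $L\lambda=c$, I would use that by the previous direction some weight $\lambda^*$ exists with $L\lambda^* = c$ and $\lambda^{*T}c = \mathrm{tr}_{\q_b}(\mathbf Y_b^T\mathbf Y_b)$; then symmetry of $L$ gives $(\lambda-\lambda^*)^T c = (\lambda-\lambda^*)^T L\lambda^* = 0$, hence $\lambda^T c = \mathrm{tr}_{\q_b}(\mathbf Y_b^T\mathbf Y_b)$, and substituting into the simplex expansion forces $\lambda^T\mathcal A\lambda=0$, confirming that $\lambda$ is also a valid weight.

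The main subtlety I anticipate is not the PSD algebra but rather the handling of the non-uniqueness regime: when $L$ is singular, the linear system $L\lambda=c$ admits an affine family of solutions, and one must verify that its intersection with $\Delta_{S-1}$ coincides exactly with the set of barycentric weights --- neither more nor fewer. The argument sketched above closes this by showing that, under the standing hypothesis $[\mathbb Y]\in\widetilde{\mathrm{Bary}}$, the scalar $\lambda^T c$ is forced to take the value $\mathrm{tr}_{\q_b}(\mathbf Y_b^T\mathbf Y_b)$ on the whole affine solution set intersected with the simplex, which in turn kills the discrepancy between the simplex identity and the vanishing of $\lambda^T\mathcal A\lambda$. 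All remaining steps --- the Gram-matrix PSD property, the expansion of $\mathcal A$, and the gradient identification via Proposition~\ref{prop: grad via blow up} --- are routine bookkeeping analogous to the proof of Theorem~\ref{thm: analysis}.
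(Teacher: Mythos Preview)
Your proposal is correct and follows essentially the same route as the paper: identify $\mathcal A$ as the Gram matrix of $\{\mathbf X^s_b-\mathbf Y_b\}_s$ under $\mathrm{tr}_{\q_b}$, use Proposition~\ref{prop: Karcher} to translate $\nabla G_\lambda([\mathbb Y])=0$ into $\lambda^T\mathcal A\lambda=0$, and expand $\mathcal A$ on the simplex in terms of $L$ and $c$. The only minor difference is in the converse direction of the linear-system equivalence: the paper argues via convexity of $\lambda\mapsto\lambda^TL\lambda-2\lambda^Tc$ (so that any unconstrained critical point in $\Delta_{S-1}$ is a simplex minimizer and, since one such point exists under the BCM hypothesis, all simplex minimizers satisfy $L\lambda=c$), whereas you give a direct algebraic computation showing $\lambda^Tc$ is constant along the affine solution set --- both are valid and yield the same conclusion.
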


\begin{proof}
First, we show that \eqref{eq: blowup analysis prob} defines a convex quadratic program.  On the one hand, the set of restrictions $\Delta_{S-1}$ convex. On the other hand,  $\mathcal A$ is a symmetric positive semidefinite $S\times S$ matrix as it is the Gram matrix of the system of matrices $\{\mathbf X^s_b-\mathbf Y_b\}_{s=1}^S$ with respect to the inner product in $L^2(Y_b\times Y_b,\mu_{Y_b}\otimes \mu_{Y_b})$ (i.e., $(A,B)\mapsto \mathrm{tr}_{\q_b}(A^TB)$ $\forall A,B\in \R^{M_b\times M_b}$)  \cite{schwerdtfeger1961introduction}.

By definition, $[\mathbb Y] \in         \widetilde{\mathrm{Bary}}(\{[\mathbb X^s]\}_{s=1}^S)$ if and only if $\nabla G_{\lambda}([\mathbb Y])=0$ for some $\lambda\in \Delta_{S-1}$. A vector is the zero vector if and only if its norm is zero, and  $\|\nabla G_{\lambda}([\mathbb Y])\|_{\mathrm{Tan}_{[\mathbb Y]}}=0$ for some $\lambda\in \Delta_{S-1}$  if and only if the problem \eqref{eq: blowup analysis prob} has a solution $\lambda\in \Delta_{S-1}$ (by Proposition \ref{prop: Karcher}). 

Finally,
we proceed as in the proof of Theorem \ref{thm: analysis}. We have just proven that   $\lambda\mapsto \lambda^T \mathcal{A}\lambda$ is non-negative and convex as a function over the whole $\R^S$.
Therefore, its critical points ($\lambda\in \R^S$ such that $\mathcal{A}\lambda=0$) are global minima.
Moreover, we can express:

\begin{equation*}
    \lambda^T\mathcal{A}\lambda = \mathrm{tr}_{\q_b}\left( \mathbf Y_b^T\mathbf Y_b\right)-2\sum_{s\in[S]}\lambda_sc_s+\sum_{s\in [S]}\sum_{r\in [S]}\lambda_s\lambda_rL_{sr},\notag
\end{equation*}
where the matrix $L$ and the vector $c$ are  given by \eqref{eq: L and c}.
As a conclusion, the critical points can be expressed as the solutions $\lambda\in \R^S$ of the linear system $L\lambda=c$. These solutions might not be unique but under the GW-BCM condition (i.e., assuming $[\mathbb Y]\in \widetilde{\mathrm{Bary}}(\{[\mathbb X^s]\}_{s=1}^S)$) there exists at least one solution $\lambda_{\mathbb Y}$ in $\Delta_{S-1}$. That is, the solutions of \eqref{eq: blowup analysis prob} are given by \eqref{eq: Llambda=c}.
\end{proof}

As a direct corollary, under the same hypothesis as  Theorem \ref{prop: gradient method}, if $[\mathbb Y]\in {\mathrm{Bary}}(\{[\mathbb X^s]\}_{s\in [S]})$ \emph{then} \eqref{eq: blowup analysis prob} is satisfied, 
and the solution argument $\lambda_{\mathbb Y}$ is its corresponding weight vector.

Algorithm \ref{alg: analysis blow up} implements the analysis scheme laid out in Theorem \ref{prop: gradient method}, and we refer the reader to Suppl. Mat. \ref{app: comp comp} for a discussion on its computational complexity.

\begin{algorithm}[ht!]
   \caption{Analysis: GW Barycenter Coordinates via Blow-up}
   \label{alg: analysis blow up}
   \begin{algorithmic}[1]
   \STATE {\bfseries \underline{Inputs:}}\\
   - Templates $\{(\mathbf{X}^s,\p^s)_{s\in [S]}\}$\\ 
   - Shape/Network: $\mathbf Y\in \R^{M\times M}$, $\q\in \mathcal{P}_M$  \\

   \smallskip

   \STATE {\bfseries \underline{Output:}} Weight vector $\lambda$\\

   \smallskip

   $\mathbf Y_b, \q_b$, $\mathbf X^1_b,\dots, \mathbf X^S_b\gets$ compute Blow-ups (i.e., apply Algorithm \ref{alg: blow up})
   \FOR{$s=1,2,\ldots, S$}
   \FOR{$r=s, s+1,\ldots, S$}
   \STATE $\mathcal \mathcal \mathcal A_{sr}=\mathrm{tr}_{\q_b}\left((\mathbf X^s_b-\mathbf Y_b)^T(\mathbf X^r_b-\mathbf Y_b)\right)$
   \STATE $\mathcal{A}_{rs}=\mathcal{A}_{sr}$
   \ENDFOR
   \ENDFOR
   \STATE $\lambda\gets \arg\min_{\lambda\in \Delta_{S-1}}\lambda^T \mathcal{A} \lambda$ 
   \RETURN $\lambda$
\end{algorithmic}
\end{algorithm}

\begin{remark}\label{remark: blowup_size}
It is worth mentioning that the blow-up step (Algorithm \ref{alg: blow up}) is essentially equivalent to solving $S$ Gromov-Wasserstein optimization problems. These $S$ concatenated GW problems are defined between networks whose sizes increase as the for-loop progressing from $s=1$ to $s=S$.  For simplicity, let us assume that the number of nodes is the same across the templates, i.e, $N^s=N$ for all $s\in [S]$. After performing the first blow-up between the first template $\mathbb X^1$, with $N$ nodes, and $\mathbb Y$, with $M$ nodes, we obtain new networks that are weakly isomorphic to the originals but have size $|\mathrm{supp}{(\pi^1)}|\geq\max\{N,M\}$, where $\pi^1\in \Pi(\p^1,\q)$ is an optimal coupling for $GW(\mathbb X^1,\mathbb Y)$. 
In the case of classical OT, any optimal coupling has support size at most 
$N+M-1$ (see, e.g., \cite[Prop. 3.4]{COTFNT}). Linear programming theory does not apply in the GW case due to the quadratic nature of the objective, even though the optimization is performed over the same set of couplings. Best-case scenarios for GW include situations in which an optimal plan is a permutation matrix (with support size $N$, when 
$N=M$), or, more generally, lies at an extreme point of the transport polytope $\Pi(\p^1,\q)$ and thus has at most $N+M-1$ nonzero entries. However, no support-size bound analogous to that in classical OT is currently known for GW minimizers, other than the trivial bound 
$NM$.
For empirical insight, we refer the reader, for e.g., to \cite[Suppl. Mat., Sec. C;  Figs. 9 $\&$ 10]{chowdhury2020gromov}, where the authors observe that representations often occur in a space much smaller than the naive requirement of size $NM$, in fact of order $N+M$. As a conclusion, as this process continues similarly for subsequent steps, involving blow-ups for the remaining templates $\mathbb X^2, \dots, \mathbb X^S$ with respect to the updated networks $\mathbb Y$, the number of nodes at each iteration $s\in [S]$ increases as well:  
In the worst case --when the GW plans are dense-- the size of the barycenter can be as large as $(NM)^S$. 
Even for sparse plans, an exponential growth of order $(M+N)^S$ is, in principle, possible. However, experimental evidence \cite{chowdhury2020gromov} supports the `linear' hypothesis that $M_b\approx SN+M$, and this has been corroborated in our experiments (cf. Section \ref{sec: experiments}). 
\end{remark}

\subsection{Relationship Between Fixed-Point and Gradient Approaches}\label{sec: relation}

In this section, we relate the two strategies considered for addressing the GW synthesis and analysis problems: the fixed-point approach, which arises from interpreting the GW barycenters as fixed points of an iteration scheme (Sections \ref{sec: Synthesis} and \ref{sec: Analysis}), and the gradient-based approach, where we define weak GW barycenters as Karcher means (Sections \ref{sec: synthesis bu} and \ref{sec: analysis bu}).

Let $(\mathbf X^s,\p^s)$, for $s\in [S]$, and $(\mathbf Y,\q)$ be representatives of the classes $\{[\mathbb X^s]\}_{s\in[S]}$ (templates) and $[\mathbb Y]$ (input shape) in $\mathcal{GM}$.
Let us preprocess them obtaining  $(\mathbf X^s_b,\q_b)$, $s\in [S]$, and $(\mathbf Y_b,\q_b)$ in  $\R^{M_b\times M_b}\times \mathcal{P}_{M_b}$, using the blow-up  Algorithm \ref{alg: blow up}. Consider GW OT plans  $\pi^s_b=\mathrm{diag}(\q_b)$ for each problem $GW((\mathbf X^s_b,\q_b),(\mathbf Y_b,\q_b))$, with $s\in [S]$. Finally, in the approach outlined in Section \ref{sec: fixed point approach}, consider $M_b$ and $\q_b$ in place of $M$ and $\q$. We observe that expression \eqref{eq: bary_fix_pi} becomes:

\begin{equation}\label{eq: relation}
    \frac{1}{\q_b (\q_b)^T}\odot\sum_{s\in [S]}\lambda_s (\pi^s_b)^T \, \mathbf X^s_b \, \pi^s_b=\frac{1}{\q_b(\q_b)^T}\odot\sum_{s\in [S]}\lambda_s \mathrm{diag}(\q_b) \,  \mathbf X^s_b  \, \mathrm{diag}(\q_b) = \sum_{s\in [S]}\lambda_s \mathbf X^s_b.
\end{equation}
In other words, under the respective GW-BCM frameworks,
the expression \eqref{eq: bary_fix_pi} from the fixed-point approach coincides with the expression in Lemma \ref{thm: equiv} satisfied by critical points $\nabla G_\lambda([\mathbb Y])=0$ or minimizers of \eqref{eq: syn gw}.  

Moreover, when formulating the analysis problem \eqref{eq: analysis_gw_bary_problem} (or, equivalently, \eqref{eq: our_analysis_bary_problem} under the GW-BCM assumption), we have the flexibility to consider any divergence. Therefore,  if 
instead of considering the classical Frobenius norm $d_a(A,B)=\|A-B\|_{\mathrm{Frob}}$ in 
Theorem \ref{thm: analysis}, we had used the variant of the Frobenius norm with weight $\q_b\otimes \q_b$,  we would have obtained that the convex quadratic problems \eqref{eq: quad frob} and  \eqref{eq: blowup analysis prob} coincide, as  $\mathcal{Q}=\mathcal A$ (to compare their expressions given in \eqref{eq: quad frob} and \eqref{eq: mat A}, using that, from \eqref{eq: relation}, one has $F(\mathbf Y_b, s)=\mathbf X_b^s$). 
Indeed, as functions from $\R^S$ to $[0,\infty)$, we would have $\Theta(\lambda)=\|G_\lambda([\mathbb Y])\|^2_{\mathrm{Tan}_{[\mathbb Y]}}$ (compare  \eqref{eq: lambda functional} and Proposition \ref{prop: Karcher}).
In particular, the linear optimization problems \eqref{eq: klambda=b} and \eqref{eq: Llambda=c} would coincide; that is, $K=L$ and $b=c$, as defined in \eqref{eq: K and b} and \eqref{eq: L and c}. 
If one tries to reformulate problem \eqref{eq: our_analysis_bary_problem} to coincide exactly with Section \ref{sec: analysis bu}, one needs to compute $\q_b$  in advance. This involves solving one GW problem per template  (as part of the blow-up procedure). The blow-up distribution $\q_b$ depends on $(\mathbf Y,\q)$ but but generally differs from $\q$.

\begin{remark}[Compatibility Test for GW Synthesis Algorithms]\label{remark: test}
 Theorem \ref{prop: gradient method} provides a \emph{necessary condition} for being a ``true'' GW barycenter. Precisely, if $[\mathbb Y]\in {\mathrm{Bary}}(\{[\mathbb{X}^s]\}_{s=1}^S)$ with corresponding coordinates $\lambda_{\mathbb Y}\in \Delta_{S-1}$, then 

   \begin{equation}\label{eq: test}
       \|\nabla G_\lambda([\mathbb Y])\|_{\mathrm{Tan}_{[\mathbb Y]}}^2=\lambda_{\mathbb Y}^T \, \mathcal{A} \, \lambda_{\mathbb Y}=0, 
\end{equation}
   where the $S\times S$ matrix $\mathcal A$ depends on the fixed templates $\{\mathbb{X}^s\}_{s=1}^S$ and on the input $\mathbb Y$, and requires preforming the blow-up representation (see \eqref{eq: mat A}). This test, however, does not provide a sufficient guarantee for being a point synthesized through \eqref{eq: syn gw}. By definition, it is instead necessary and sufficient for addressing if a network belongs to  $\widetilde{\mathrm{Bary}}(\{[\mathbb{X}^s]\}_{s=1}^S)$. In particular, when generating synthetic barycenters (e.g., via Algorithm \ref{alg: synthesis}), one can test through \eqref{eq: test} whether the obtained object is, at least, a weak GW barycenter.
\end{remark}

\section{Experimental Results}\label{sec: experiments}

In this section, we experimentally assess the performance of our algorithms for solving the GW analysis problem. We also present applications to classification. Additional use cases involving point cloud corruptions are provided in Suppl. Mat. \ref{app: experiments - corr}.

\subsection{Reconstruction Performance of Algorithms \ref{alg: analysis} and \ref{alg: analysis blow up} Under the GW-BCM}\label{sec: alg analysis 1 in 3d}
  In this section, we demonstrate the performance of the proposed GW analysis algorithms by applying them to 3D point cloud data: Figures \ref{fig: analysis_alg} and \ref{fig: 100p}
  illustrate the application of Algorithms \ref{alg: analysis} and \ref{alg: analysis blow up} on the  ModelNet40 - Princeton 3D Object Dataset \cite{wu20153d} consisting of 3D CAD models of 40 common object categories such as chairs, tables, airplanes, lamps, and more. 

  From the object category `airplane' we select $S=3$ templates and sample them to generate 3D point clouds with (varying) numbers of points $N^s$, $s=1,2,3$ (Fig. \ref{fig: analysis_alg}). We compute the associated $N^s\times N^s$ pairwise Euclidean distance matrices $\mathbf X^s$ 
within each point cloud ($s=1,2,3$).
We assign to each point uniform mass,  that is, $\p^s=\frac{1}{N^s}(1,\dots,1)\in \mathbb R^{N^s}$, for $s=1,2,3$. 
We consider either the uniform vector $\lambda=(\frac{1}{3},\frac{1}{3},\frac{1}{3})\in \Delta_2$ or a random vector $\lambda\in \Delta_2$ drawn from a Dirichlet distribution with uniform parameters $\alpha=(1,1,1)$,  which corresponds to the uniform distribution over the 2-simplex. 
For such a $\lambda$ and the selected templates, we synthesize a GW barycenter $M\times M$ matrix $\mathbf{Y}$ by using the function 
\texttt{ot.gromov.gromov\_barycenters} from the POT Library \cite{flamary2021pot} with a predetermined size $M$ and uniform probability $\q=\frac{1}{M}(1,\dots,1)\in \R^{M}$, which is based on a fixed-point algorithm in the spirit of Algorithm \ref{alg: synthesis}. 
Then, we apply Algorithm \ref{alg: analysis} or Algorithm \ref{alg: analysis blow up} to the synthetic GW barycenter, $\q$, and the templates $\{(\mathbf X^1,\p^1),(\mathbf X^2,\p^2),(\mathbf X^3,\p^3)\}$  to get the estimate $\widetilde{\lambda}\approx\lambda$.

In Fig. \ref{fig: analysis_alg} we consider $\lambda=(\frac{1}{3},\frac{1}{3},\frac{1}{3})$, and after estimation, we reconstruct the shape as:
\begin{itemize}[leftmargin=*]
    \item If we use Algorithm \ref{alg: analysis} for recovering the vector $\widetilde \lambda=(\widetilde \lambda_1,\widetilde \lambda_2, \widetilde \lambda_3)$, we compute the matrix

\begin{equation}\label{eq: Y analysis alg}
    \mathbf{Y}_{\mathrm{bary}}:= \frac{1}{\q\q^T}\odot\sum_{s\in [S]}\widetilde \lambda_s (\pi(\mathbf Y, s))^T \mathbf X^s \pi(\mathbf Y,s)\in \R^{M\times M},
\end{equation}
where, for each $s\in[S]$,  $\pi(\mathbf Y, s)$ is an optimal coupling for $GW((\mathbf Y,\q),(\mathbf X^s,\p^s))$ computed using the function \texttt{ot.gromov.gromov\_wasserstein} from the POT Library. 
\item If we use Algorithm \ref{alg: analysis blow up} for recovering a vector $\widetilde \lambda=(\widetilde \lambda_1,\widetilde \lambda_2, \widetilde \lambda_3)$, we compute the matrix 

\begin{equation}\label{eq: Y analysis alg blow up}
    \mathbf{Y}_{\mathrm{bary bu}}:=\sum_{s\in[S]}\widetilde \lambda_s \, \mathbf X^s_b \in \R^{M_b\times M_b},
\end{equation}
where $\{\mathbf{X}^s_b\}_{s=1}^S$ are the blow-up matrices obtained after preprocessing via Algorithm \ref{alg: blow up} applied to the original templates with respect to $(\mathbf Y,\q)$, and where  $\q_b\in\mathcal{P}_{M_b}$ is also derived.
\end{itemize}
  For visualization, we apply the classical multidimensional scaling embedding (MDS) \cite{mead1992review} with 3 components to the synthetic GW barycenter matrix  $\mathbf Y$ and the recovered matrices $\mathbf Y_{\mathrm{bary}}$ and $\mathbf Y_{\mathrm{barybu}}$ given by the two approaches \eqref{eq: Y analysis alg} and \eqref{eq: Y analysis alg blow up}, respectively. For assessing the reconstruction accuracy, we use the function \texttt{ot.gromov.gromov\_wasserstein} from  the POT Library to compute $GW((\mathbf Y,\q),(\mathbf Y_{\mathrm{bary}},\q))$ and $GW((\mathbf Y,\q),(\mathbf Y_{\mathrm{barybu}},\q_b))$.

In Fig. \ref{fig: 100p} we estimate GW barycentric coordinates $\widetilde \lambda\approx \lambda$ for synthetic GW barycenters 
built with randomly generated  $\lambda$. We synthesize with input size $M=30$ and notice that when using templates having the \emph{same number of nodes} (equal to $30$), the blow-up average size (calculated when applying Algorithm \ref{alg: analysis blow up}) is
$M_b=30$; and when using templates having \emph{different number of nodes} (between $25$ and $35$),  the blow-up average size obtained is
 $M_b\approx 107$ (same order as $4\times30$, cf. Remark \ref{remark: blowup_size}).}

\begin{figure}[h!]
\centering
    \begin{subfigure}{\linewidth}

        \centering
        \includegraphics[width=\linewidth]{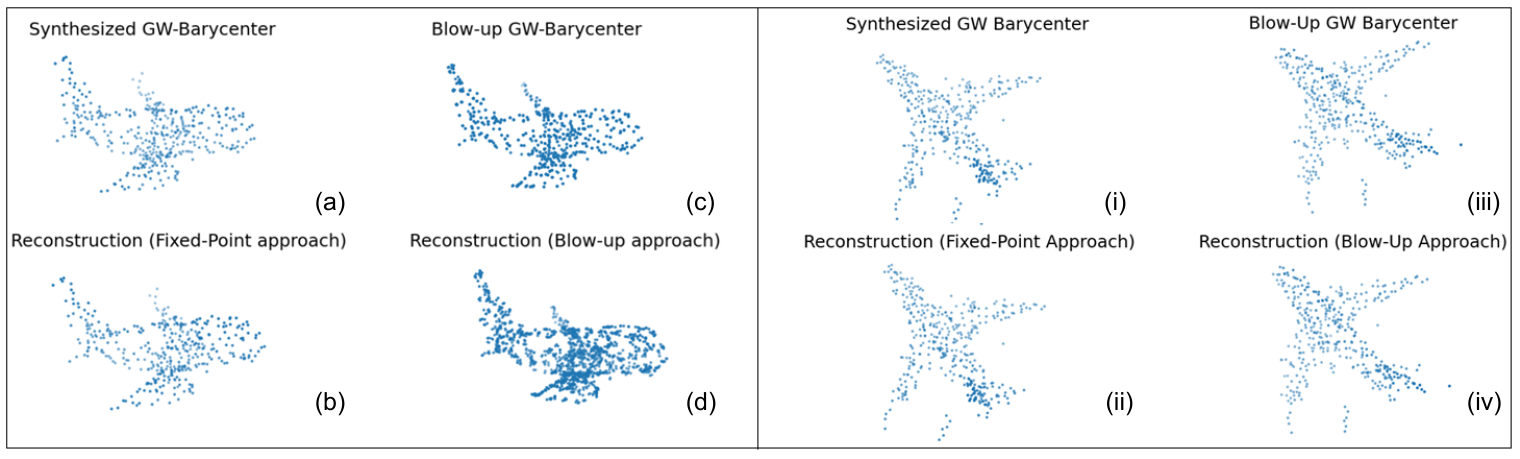}

        \caption*{\small{Reconstructions of synthetic GW barycenters using MDS on distance matrices.}}
    \end{subfigure}
        \begin{subfigure}{0.25\linewidth}\centering
\small

\setlength{\tabcolsep}{4pt}
\renewcommand{\arraystretch}{1.1}
\centering
\begin{tabular}{|c|}
\hline
$\mathrm{GW}(\text{\textbf{(a)}},\text{\textbf{(b)}})\approx 10^{-16}$ \\
$\mathrm{GW}(\text{\textbf{(a)}},\text{\textbf{(c)}})\approx 10^{-16}$ \\
$\mathrm{GW}(\text{\textbf{(a)}},\text{\textbf{(d)}})\approx 10^{-5}$  \\
\hline
\end{tabular}

\caption*{\small{Order of GW Errors.}}
\end{subfigure}
        \begin{subfigure}{0.45\linewidth}
        \centering
        \includegraphics[width=\linewidth]{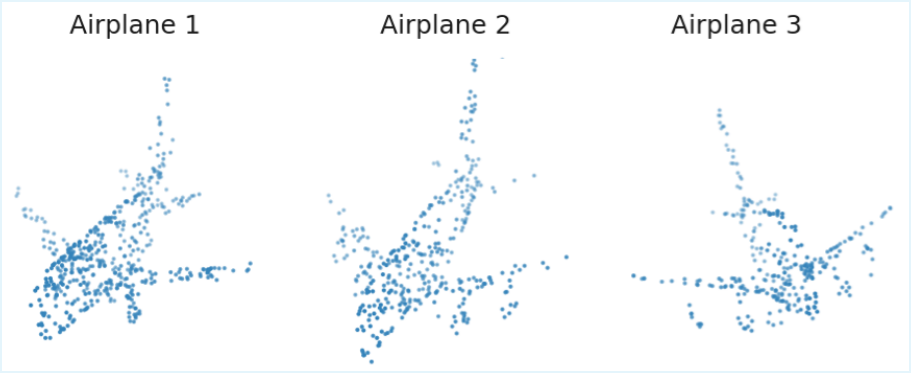}

        \caption*{\small{Templates.}}
    \end{subfigure}
    \begin{subfigure}{0.25\linewidth}
\centering
\small
\setlength{\tabcolsep}{4pt}
\renewcommand{\arraystretch}{1.1}
\centering
\begin{tabular}{|c|}
\hline
$\mathrm{GW}(\text{\textbf{(i)}},\text{\textbf{(ii)}})\approx 10^{-16}$ \\
$\mathrm{GW}(\text{\textbf{(i)}},\text{\textbf{(iii)}})\approx 10^{-17}$ \\
$\mathrm{GW}(\text{\textbf{(i)}},\text{\textbf{(iv)}})\approx 10^{-17}$ \\
\hline
\end{tabular}

\caption*{\small{Order of GW Errors.}}
\end{subfigure}

   \caption{\small{\textbf{(a)} GW barycenter $(\mathbf Y,\q)$ 
    consisting of $M=400$ points and uniform mass $\q$, synthesized with the templates above sampled with \emph{different number of points} between $300$ and $500$.
    \textbf{(b)} MDS of the matrix given by \eqref{eq: Y analysis alg} (i.e., $(\mathbf Y_{\textrm{bary}},\q)$) after applying the fixed-point analysis Algorithm \ref{alg: analysis} to \textbf{(a)} to estimate $\widetilde\lambda\approx \lambda$ (error of order $10^{-14}$). 
    \textbf{(c)} The resulting blow-up of \textbf{(a)} (i.e., $(\mathbf Y_{b},\q_b)$) when Algorithm \ref{alg: blow up} is applied to the templates and \textbf{(a)}. The size $M_b$ was about $1500$, which is of the same order as $400+3N$ where $300 \leq N\leq 500$ (cf. Remark \ref{remark: blowup_size}).
    \textbf{(d)} MDS of the matrix given by  \eqref{eq: Y analysis alg blow up} (i.e., $(\mathbf Y_{\textrm{barybu}},\q_b)$) after applying the blow-up analysis Algorithm \ref{alg: analysis blow up} to \textbf{(a)} to recover $\widetilde\lambda\approx \lambda$ (error of order $10^{-3}$).
    \textbf{(i)--(iv)} Analogous to \textbf{(a)--(d)} but \emph{uniformly sampling} the templates with $M=400$ points. Errors in $\lambda$ estimation for \textbf{(ii)} and \textbf{(iv)} are of order $10^{-15}$ and $10^{-16}$, respectively. Blow-up size $M_b=400$.
    Difference: \textbf{(a)} does not satisfy the test from Remark \ref{remark: test}, while \textbf{(i)} passes the test. 
    }} 
    \label{fig: analysis_alg}
\end{figure}

\begin{figure}[H]
    \centering
   \includegraphics[width=\linewidth]{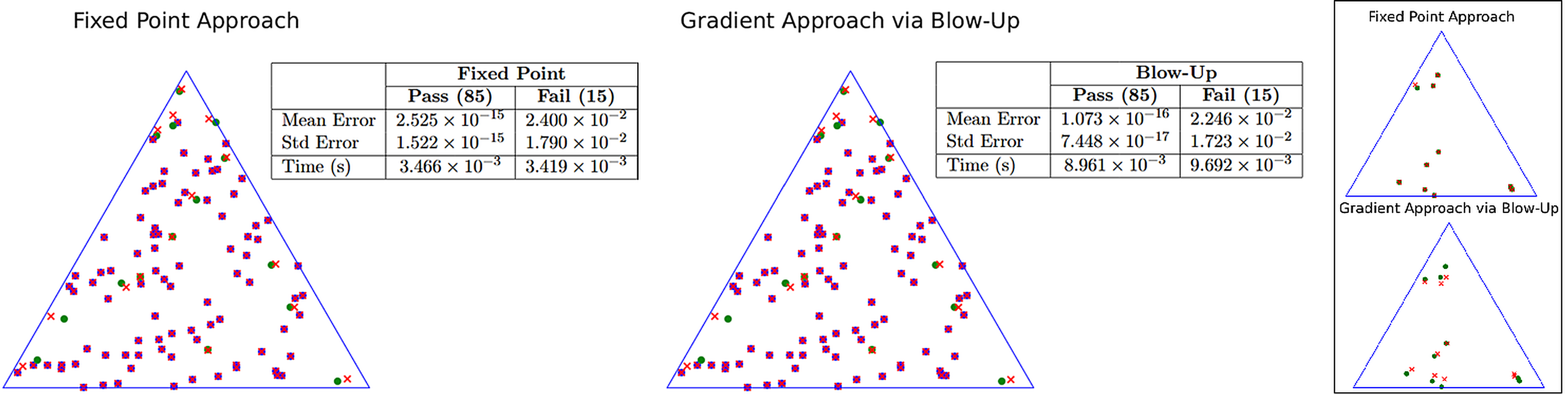}

    \caption{\small{Depiction of 100 different randomly generated $\lambda\in \Delta_2$ (dots) and their estimation $\widetilde \lambda$ (red crosses) from synthesized GW barycenters, using Algorithm \ref{alg: analysis} ({left}) and Algorithm \ref{alg: analysis blow up} ({right}). Each $\lambda$ is represented as a convex combination of the vertices of an equilateral triangle. We compute mean and standard deviation of the reconstruction error $\lVert \lambda - \widetilde\lambda \rVert_2$ and of the average wall-clock time 
    (in seconds) of our GW-analysis methods over $100$ independent runs. For each experiment, we use   
     3 templates, \emph{uniformly sampled} at 30 points, with uniform distribution of mass across nodes. The 100 GW barycenters, with $M=30$ points and $\q=\frac{1}{30}(1,\dots,1)\in \R^{30}$ each, were synthesized using \texttt{ot.gromov.gromov\_barycenters} from the POT Library.  Blow-up average size when using Algorithm \ref{alg: analysis}: $M_b=30$.
     Blue dots (85) represent synthesized output that pass the compatibility test in Remark \ref{remark: test}, whereas green dots do not (15). In the right-hand boxed panel we run 10 experiments with templates \emph{sampled at varying rates} noticing all the synthetic outputs fail to be critical points of the GW synthesis functional (all green dots).      
}}
    \label{fig: 100p}
\end{figure}

\subsection{Clustering, Classification and Visualization}\label{sec: classif main}
In this section, we expand on the motivating visualization in Fig. \ref{fig: baryspace01}, illustrating how GW barycentric coordinates capture meaningful structure in the data: they separate classes while showing slight overlaps when samples shared geometric features.  Generally, suppose a dataset contains $C$ classes, and $u$ templates are randomly selected from each class. For each sample, one can compute GW barycentric coordinates with respect to the templates using either Algorithm \ref{alg: analysis} or Algorithm \ref{alg: analysis blow up}. Each sample is thus represented by a coordinate vector $\lambda \in \mathbb{R}^S$, where $S = u \times C$. These coordinates can be used directly for supervised classification (e.g., k-NN) or for unsupervised analysis such as K-Means clustering. To visualize the coordinates when $S>3$, one can apply another dimensionality reduction such as t-SNE to map $\mathbb{R}^S$ to $\mathbb{R}^2$.

As a proof of concept, we apply this framework to the Point Cloud MNIST 2D dataset \cite{Garcia2023PointCloudMNIST2D}.
We first consider \emph{classification experiments} with $C=2$ and $u=1$; results are illustrated in Fig. \ref{fig: 04_one_template} and reported in Table \ref{tab:classification_results}. Algorithms \ref{alg: analysis} and \ref{alg: analysis blow up} perform comparably in this setting, although Algorithm \ref{alg: analysis blow up} incurs additional computational cost due to the preprocessing step (Algorithm \ref{alg: blow up}). To generate Table \ref{tab:classification_results}, we treat the $\lambda$-vectors as features and apply a k-NN classifier. We compare against two baselines. First, we consider a naive 1-NN classifier based directly on the GW distance: by using only the selected templates as the training set, each sample $\mathbb{Y}$ is assigned the label of the template $\mathbb{X}^s$ minimizing $\mathrm{GW}(\mathbb{X}^s,\mathbb{Y})$. Second, we consider the GW dictionary-learning approach of \cite{D-vincent2021online} (without regularization), implemented in the POT Library \cite{flamary2021pot}. 
Dictionary atoms (of fixed size $20$, learned through 5 epochs) are learned using the full dataset, and each sample is represented by so-called \emph{unmixing weights} obtained by solving a GW linear-like approximation problem. This objective differs from the GW barycenter problem and should be viewed as a computationally tractable relaxation. Additional classification experiments can be found in Suppl. Mat. \ref{sec: additional_classification_app}.

\begin{figure}[H]
    \centering
    \begin{subfigure}{0.4\linewidth}
        \centering
        \includegraphics[width=\linewidth]{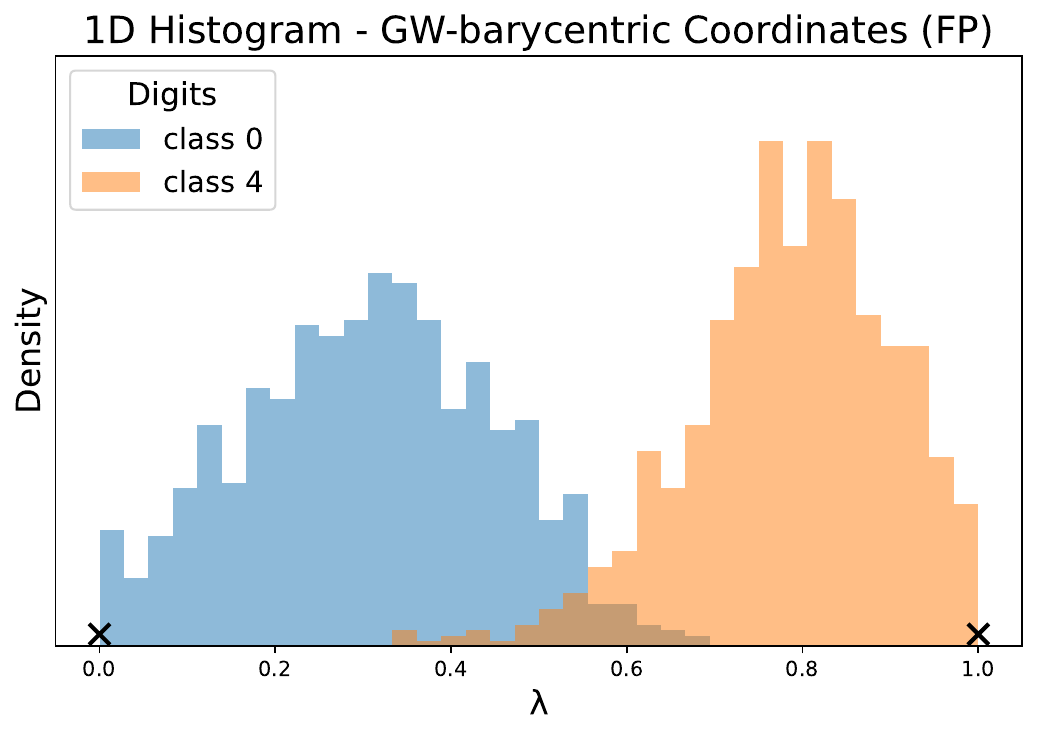}

        \caption{\small{Fixed-Point approach -- Digits $0\&4$.}}
        \label{fig:fphist}
    \end{subfigure}
    \begin{subfigure}{0.4\linewidth}
        \centering
         \includegraphics[width=\linewidth]{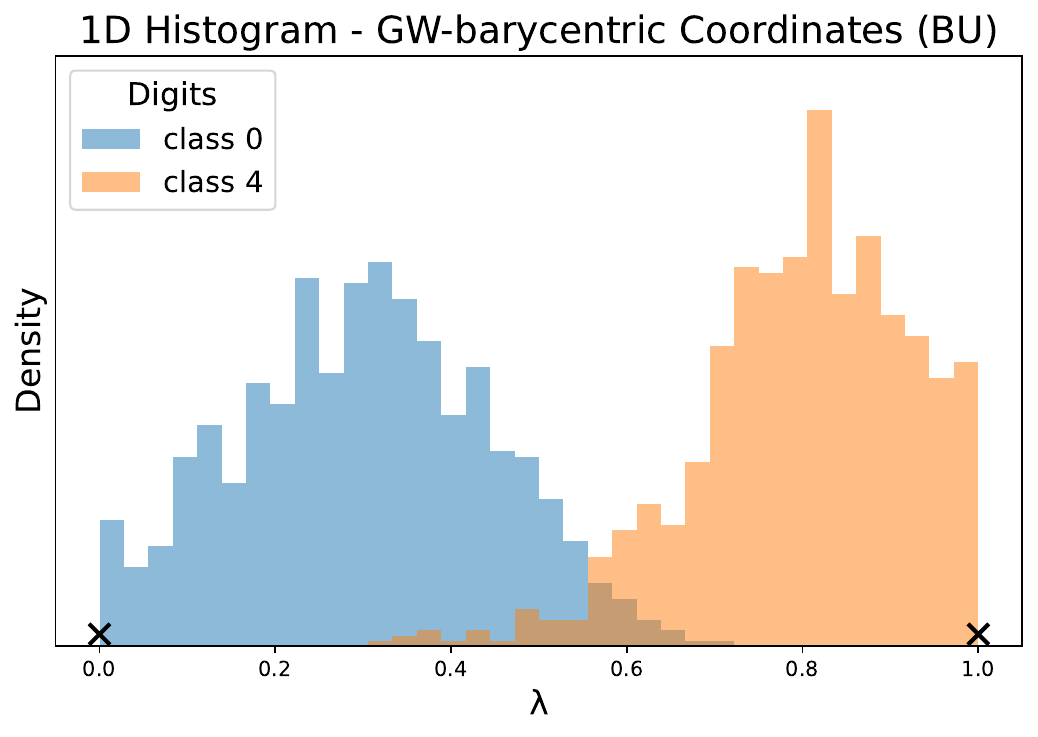}

        \caption{\small{Blow-Up approach -- Digits $0\& 4$.}}
        \label{fig:buhist}
    \end{subfigure}
    \begin{subfigure}{0.18\linewidth}
    \centering 
         \includegraphics[width=0.9\linewidth]{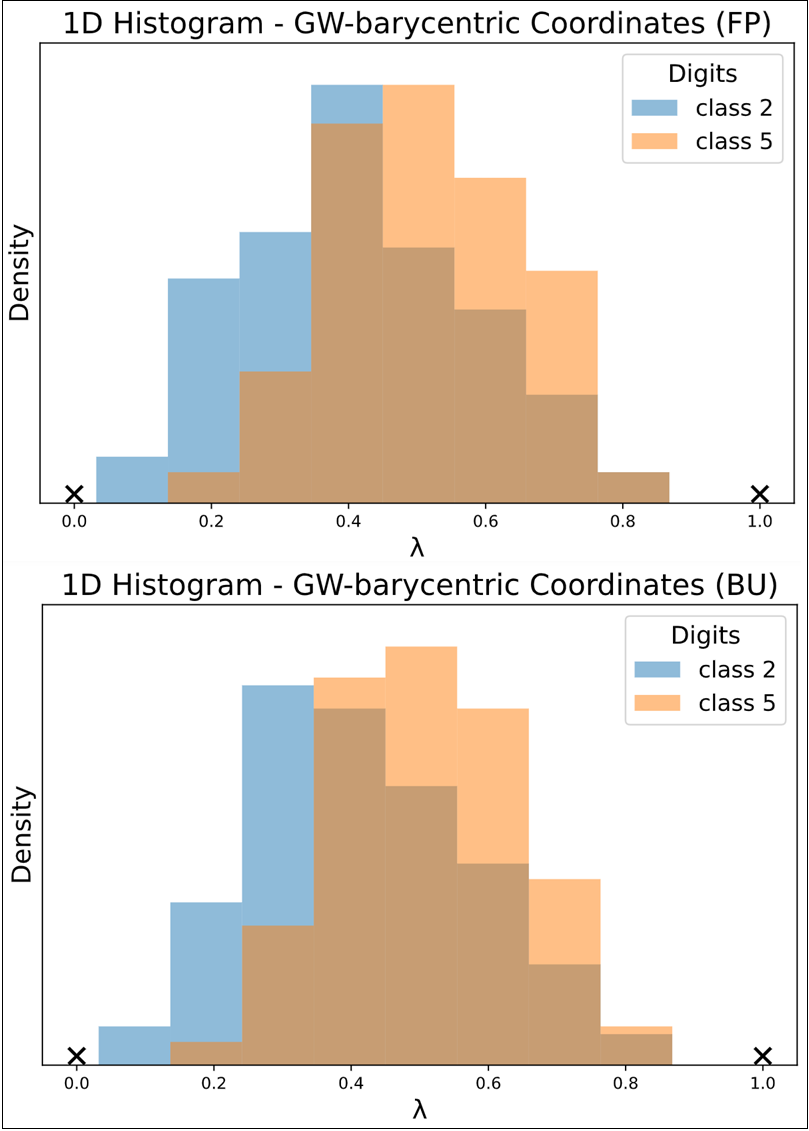}
\vspace{+0.25in}  
        \caption{\small{Digits $2\&5$.}}
        \label{fig:buhist25}
    \end{subfigure}

    \caption{\small{(a)-(b): We considered the classes of digits 0 and 4 from the Point Cloud MNIST 2D dataset \cite{Garcia2023PointCloudMNIST2D}, and selected 900 samples and one template per class. The GW coordinates $(\lambda_1,\lambda_2)\in\Delta_1$ of each data point are computed using Algorithm \ref{alg: analysis} in (a) and Algorithm \ref{alg: analysis blow up} in (b). Since the representation is intrinsically one-dimensional, the plots visualize the empirical distributions of the GW barycentric coordinate $\lambda := \lambda_1 \in [0,1]$. Regions of overlap correspond to geometrically ambiguous samples and explain the observed classification errors of k-NN (see Table \ref{tab:classification_results}). 
   The mass across nodes in each point cloud is not uniformly distributed, and each point cloud in the dataset has a different number of nodes, varying from 50 to 200 points with an average of 120. The blow-up average size $M_b$ from the experiments on the right  is 
   $415.5$
   (which is of the same order as $3\times 120$, cf. remark \ref{remark: blowup_size}). Subplot (c), where 200 samples of the digits 2 and 5 where considered, indicates that the framework captures meaningful structures in the data since those digits share their internal structure and overlap is expected. }}
    \label{fig: 04_one_template}
\end{figure}

\begin{table}[ht!]
\centering
\small
\setlength{\tabcolsep}{4pt}
\resizebox{\linewidth}{!}{\begin{tabular}{lcccc}
\toprule
 & \textbf{$\lambda$-GW (FP) + k-NN} & \textbf{$\lambda$-GW (BU) + k-NN} &\textbf{ GW 1-NN}  & \textbf{GW dict + k-NN} \\
\midrule
\textbf{Accuracy (mean $\pm$ std) }
& 0.94 $\pm$ 0.009 
& 0.95 $\pm$ 0.012
& 0.89 $\pm$ 0.016
& 0.93 $\pm$ 0.016 \\
\textbf{Time (s)}
& 92.67 ($\lambda$-GW coord)
& 147.23 ($\lambda$-GW coord)
& 97.17 (avg run)
& 50.21 (learn atoms) + 10.05 (unmix weights) \\
\bottomrule
\end{tabular}}
\caption{\small{Classification performance and runtime comparison on the Point Cloud MNIST 2D dataset \cite{Garcia2023PointCloudMNIST2D} for digits 0 and 4 with $200$ samples each.
For columns 2, 3, and 5, each sample is represented either by GW barycentric coordinates $\lambda$ computed via the fixed-point (FP) approach (Algorithm \ref{alg: analysis}), the blow-up (BU) approach (Algorithm \ref{alg: analysis blow up}), or by GW dictionary unmixing weights \cite{D-vincent2021online}, respectively, and then classified using k-NN in the resulting feature space.
For k-NN methods, we use repeated stratified 70/30 train/test splits (20 outer repetitions) and select $k$ by 5-fold stratified cross-validation (CV) performed on the training split within each outer repetition. Specifically, $k$ is chosen across $\{1,3,5,7,9\}$ to maximize the mean CV accuracy; the classifier is then retrained on the training set and evaluated on the test set from the corresponding outer split.
In column 4 (GW 1-NN), each sample is assigned the label of the template that minimizes the GW distance. 
}}
\label{tab:classification_results}
\end{table}

We then consider \emph{unsupervised analysis} and \emph{embedding visualization} using multiple templates per class. For $u=3,6$, we perform K-Means on the GW coordinate space and apply t-SNE for visualization; the resulting embeddings are shown in Fig. \ref{fig:combined_templates_tsne}. As illustrated in Figures \ref{fig:templates_side1} and \ref{fig:tsne_side}, increasing the number of templates per class leads to clearer separation between classes in the embedded space.   
Additional visualizations with three classes ($C=3$) are provided in Fig. \ref{fig:decision_regions_simplex} and Fig. \ref{fig: Delta_2} in Suppl. Mat. (with $u=1$). 

\begin{figure}[ht!]
    \centering
    \begin{subfigure}[t]{0.33\linewidth}
        \centering
        \includegraphics[width=\linewidth]{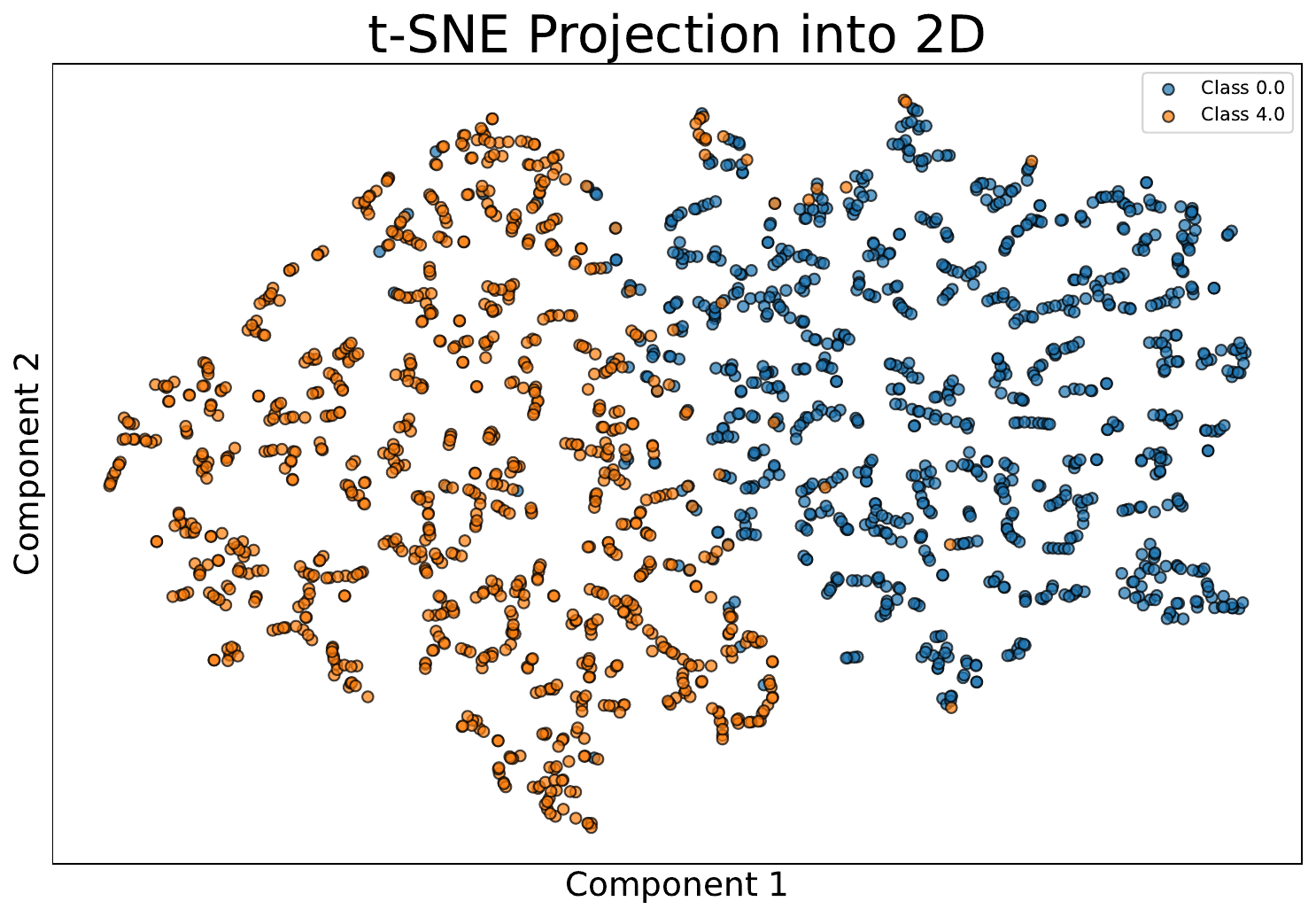}

        \caption{\small{3 templates per class--FP.}}
        \label{fig:templates_side1}
    \end{subfigure}%
    \begin{subfigure}[t]{0.33\linewidth}
        \centering
        \includegraphics[width=\linewidth]{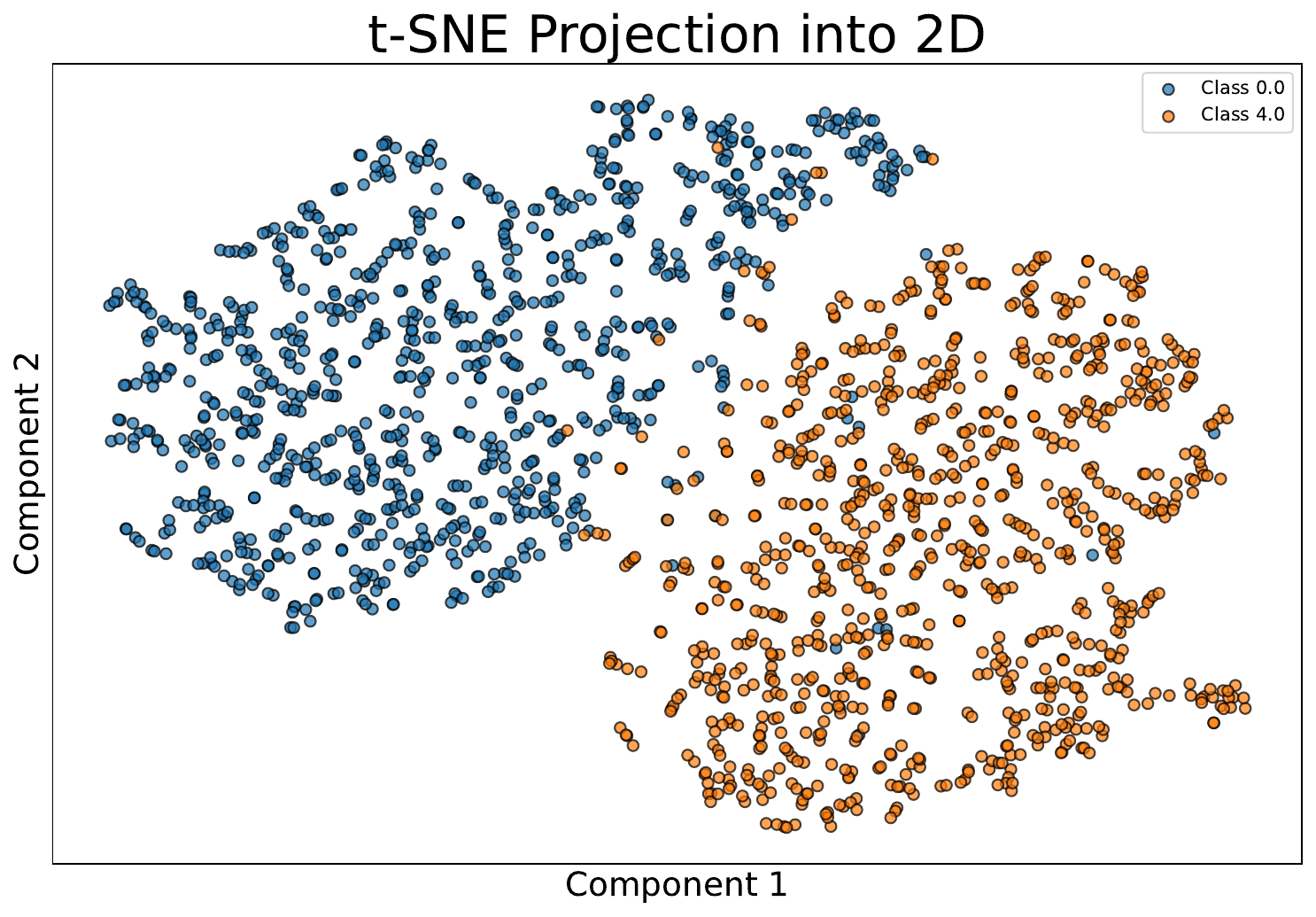}

        \caption{6 templates per class--FP.}
        \label{fig:tsne_side}
    \end{subfigure}
            \begin{subfigure}[t]{0.33\linewidth}
        \centering
        \includegraphics[width=\linewidth]{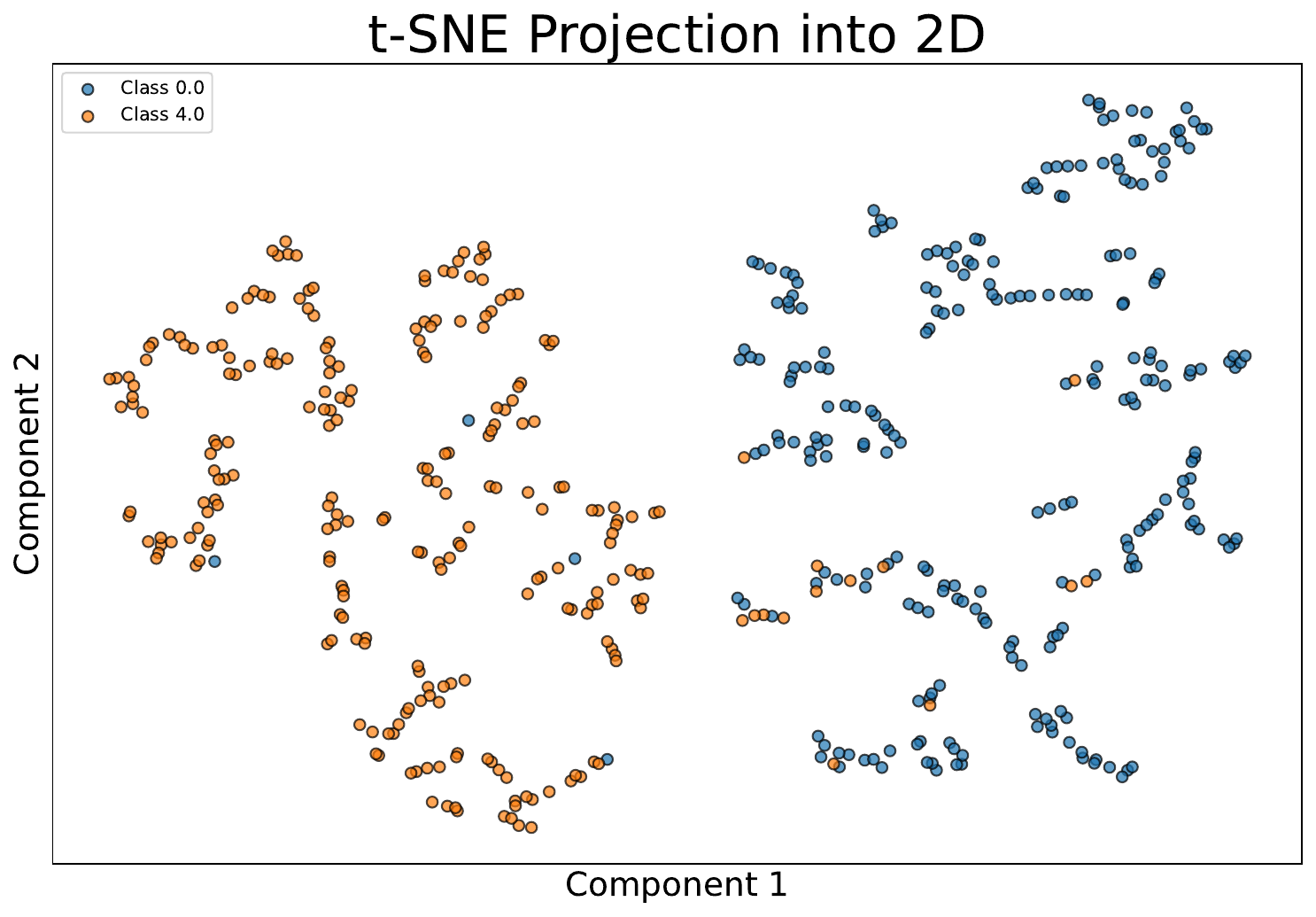}

        \caption{\small{6 templates per class--BU.}}
    \end{subfigure}

    \caption{\small{Visualization through t-SNE of estimated $\lambda$ coordinates via Algorithm \ref{alg: analysis} (a)-(b) and Algorithm \ref{alg: analysis blow up} (c). For (a) and (b) we consider 1800 total samples, and 500 for (c).
    Multiple templates per class: (a) 3; (b) 6; (c) 6.  
    Clustering performance via K-Means in $\lambda$-space: Adjusted Rand Index for FP analysis, (a) 0.76,   (b) 0.81; for BU analysis, 0.86 when using 3 templates per class.
    }}
    \label{fig:combined_templates_tsne}
\end{figure}

\begin{figure}[ht!]
    \centering
    \begin{subfigure}{0.4\linewidth}
        \centering
        \includegraphics[width=\linewidth]{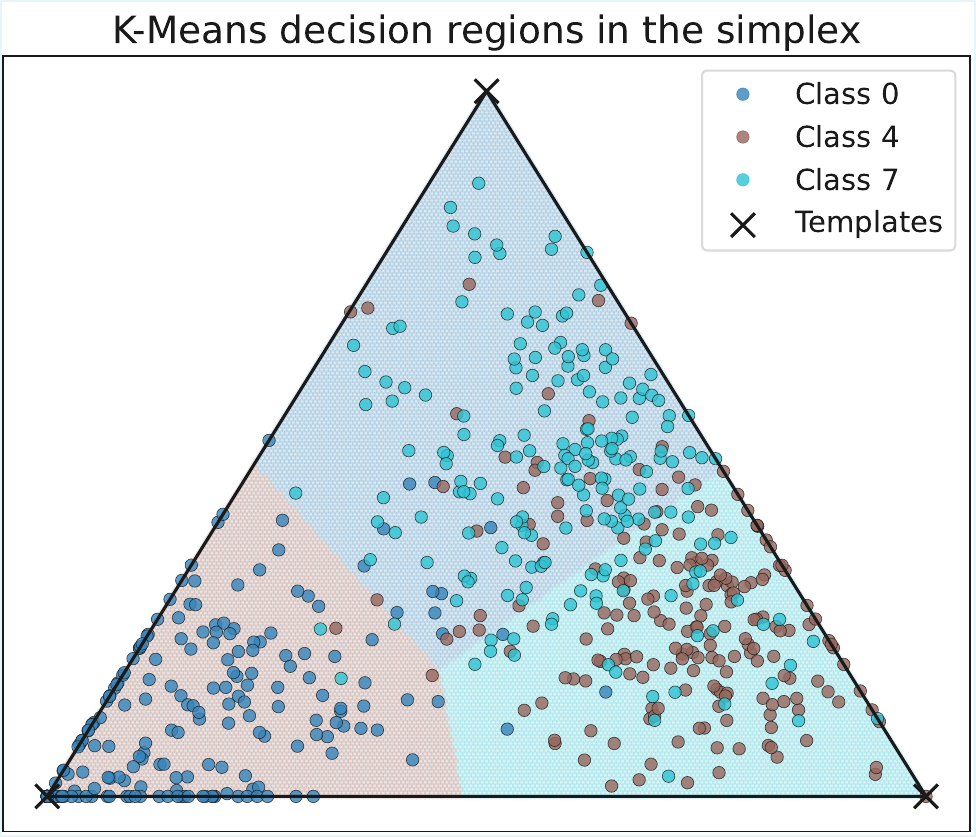}

        \caption{\small{Fixed-Point approach.}}
    \end{subfigure}
    \hfill
    \begin{subfigure}{0.4\linewidth}
        \centering
        \includegraphics[width=\linewidth]{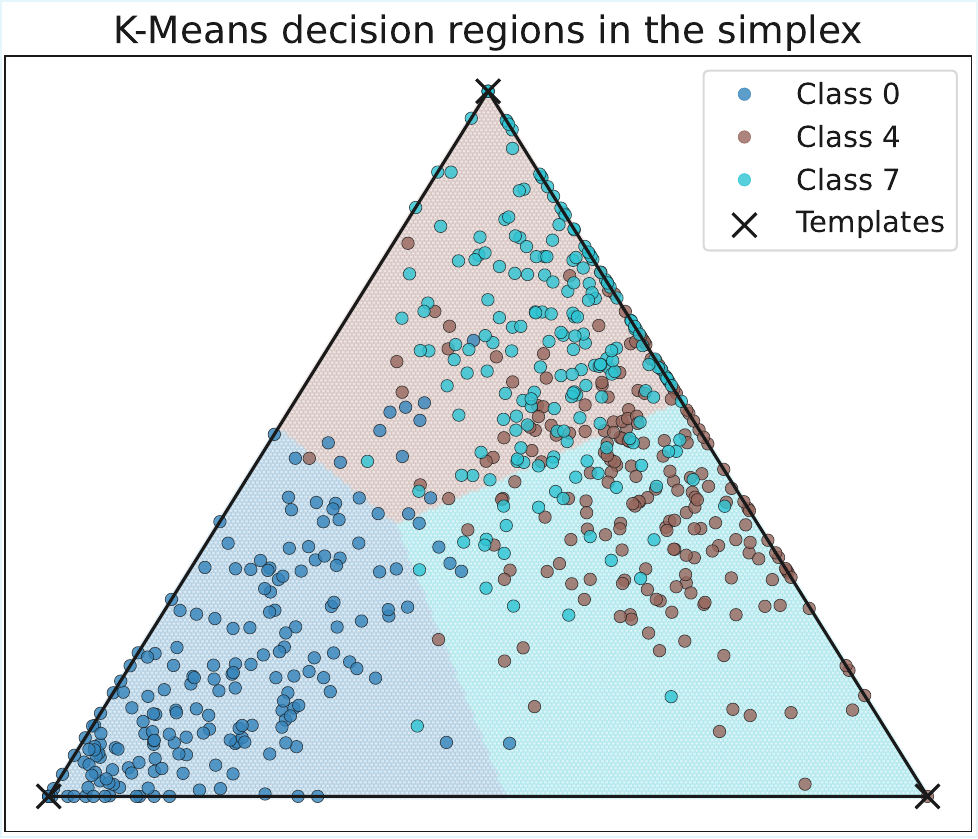}

        \caption{\small{Blow-Up approach.}}
    \end{subfigure}

\caption{\small{K-means clustering partitions: Comparison of decision regions on the simplex for three MNIST classes (digits 0, 4, and 7), using 200 sample point clouds per class. 
Algorithm \ref{alg: analysis} achieves Adjusted Rand Index $\mathrm{ARI}\approx 0.6$
and requires $\approx 405$ seconds to compute all GW-barycentric coordinates.
Algorithm \ref{alg: analysis blow up} yields $\mathrm{ARI}\approx0.55$,
requires $\approx 1293$ seconds to compute all coordinates, and has an average blow-up size ($M_b$): 661.8.  Clear clustering for digit class 0. In contrast, digit classes 4 and 7 exhibit
overlap, suggesting the presence of shared geometric shapes between them.}}
    \label{fig:decision_regions_simplex}
\end{figure}

\section{Conclusions}\label{sec: conclusions}

This paper addresses the problem of recovering barycentric coordinates in the context of finite metric spaces, or more generally, networks with weighted dissimilarities between nodes, embedded via the Gromov-Wasserstein (GW) distance, referred to as the \emph{GW analysis problem}. While the literature offers at least two established methods for computing GW barycenters, this work builds 
upon them and proposes two corresponding solutions to the GW analysis problem, each with its own algorithm. Moreover, we show that these two perspectives are not entirely independent: under suitable 
parameter choices,   they share common ground. Notably, both proposed solutions correspond to critical points of convex functions (Theorems \ref{thm: analysis} and \ref{prop: gradient method}). Our results pertain to recovery of optimal weights when a network lies exactly in the GW barycentric span. Nevertheless, our strategies are applicable, and can still be useful, when target objects are not exact barycenters. Obtaining theoretical guarantees to learn optimal weights for networks \emph{outside} the span is an interesting problem. In addition to Section \ref{sec: classif main}, where experiments are conducted on non-synthetic data, we present preliminary empirical work and a discussion of the challenges of this problem in Suppl. Mat.  \ref{sec supp: projection}. 

This paper focuses on the case of discrete metric measure spaces (or discrete networks), although the GW framework also applies in the continuum setting. Extensions of our fixed-point and gradient-based approaches to a broader context could be of interest. This paper does not establish a full equivalence between Karcher means and GW barycenters. In the case of classical OT, such an equivalence can be shown under certain assumptions \cite{werenski2022measure}. Developing analogous conditions in the GW setting remains an open problem for future research. 

There are also several experimental directions to explore. For instance, efficient implementations of the blow-up technique, so that, when applied to a collection of networks, they could identify a smallest equivalent representative of each network while preserving global alignment (enabling compressed storage, and faster computations, by avoiding repeated values). Also, further experiments in settings with alternative node dissimilarities (e.g., replacing the Euclidean distance with intrinsic notions such as geodesic distances  on manifolds) would also help solidify the applicability of the GW-BCM to real world problems.  Initial experimental results with  $p$-Fermat \cite{james-little2022balancing} and graph  distances in the GW-BCM framework are in Suppl. Mat. \ref{sec:NonEuc} and
\ref{sec: additional_classification_app}.

\section*{Acknowledgments} RDM and JMM were partially supported by DMS-2318894.  JMM was partially supported by DMS-2309519.  IVM was funded by NIH award GM130825 and ONR award N000142212505.

\appendix

\section{Background: The Barycentric Coding Model (BCM)}\label{sec: BCM}

Suppose $\mathrm B$ is a convex set in a vector space having finitely many \emph{extreme points} $\mathbf x^1,\dots,\mathbf x^S$, for some $S\in \N$. That is, $\mathrm B$ is the \emph{convex hull} of $\{\mathbf x^1,\dots,\mathbf x^S\}$: every point $\mathbf x\in \mathrm B$ can be expressed as
\begin{equation}\label{eq: BC}
    \mathbf x=\sum_{s\in [S]}\lambda_s\mathbf x^s, \qquad \text{ for some } \lambda=(\lambda_1,\dots,\lambda_S)\in \Delta_{S-1}.
\end{equation}
Reciprocally, since we are assuming that $\mathrm B$ is convex, we have that every convex combination of the points $\mathbf x^1,\dots,\mathbf x^S$ belongs to $\mathrm B$. Hence, we have a correlation between points in $\mathrm B$ and its \emph{barycentric coordinates} in $\Delta_{S-1}$ given by the relation \eqref{eq: BC}: $\mathrm B\ni\mathbf x \longleftrightarrow \lambda\in \Delta_{S-1}.$
Thus, given a vector of weights $\lambda\in \Delta_{S-1}$, one can \emph{synthesize} a point in $\mathrm B$ by:
\begin{equation}\label{eq: bary ad convex comb}
    \Delta_{S-1}\ni \lambda\longmapsto \mathbf x_\lambda:=\sum_{s\in [S]}\lambda_s\mathbf x^s\in \mathrm B.
\end{equation}
In Euclidean spaces, \eqref{eq: bary ad convex comb} coincides with solving the barycenter problem
\begin{equation}\label{eq: bary ad convex comb2}
    \Delta_{S-1}\ni \lambda\longmapsto \mathbf x_\lambda=\argmin_{\mathbf x\in \R^n}\sum_{s\in [S]}{\lambda_s}\|\mathbf x^s-\mathbf x\|^2\in \mathrm B.
\end{equation}
On the other hand, one can \emph{analyze} an element $\mathbf x \in \mathrm B$, by decomposing it into the set $\{\mathbf{x^{1}},\dots,\mathbf{x^{S}}\}$ in terms of its \emph{barycentric coordinates}. For example, if $\mathrm B\subset \mathbb R^n$, this corresponds to solving the following optimization problem:
\begin{equation}\label{eq: BCanalysis}
    \mathbf x\longmapsto\lambda_{\mathbf x}:=\argmin_{\lambda\in \Delta_{S-1}}\|\mathbf x -\sum_{s\in [S]} \lambda_s \mathbf x^s\|^2 .
\end{equation}
Moreover, one can extend \eqref{eq: BCanalysis} by considering $\mathbf{x} \in \mathbb{R}^n$ instead of restricting to $\mathbf{x} \in \mathrm{B}$. 
This amounts to projecting $\mathbf{x}$ onto $\mathrm{B}$ and retaining only the coordinates $\lambda_{\mathbf{x}}$ of the projected point\footnote{
Let $\Phi : \mathbb{R}^S \to \mathbb{R}^n$ be the matrix whose columns are the vectors $\mathbf{x}^1, \dots, \mathbf{x}^S$, and consider the quadratic function  $f(\lambda) := \|\mathbf x -\sum_{s\in [S]} \lambda_s \mathbf x^s\|^2=\|\mathbf{x} - \Phi \lambda\|^2$ for all $\lambda \in \Delta_{S-1}$. Since $f$ is continuous and the simplex is compact, a minimizer $\lambda_{\mathbf x}$ for problem \eqref{eq: BCanalysis} exists.  Define $\mathbf{z} := \Phi \lambda_{\mathbf{x}} \in \mathrm{B}$. By construction and since $\mathrm B=\Phi(\Delta_{S-1})$, we have
$ \|\mathbf x-\mathbf z\|=\min_{\lambda \in \Delta_{S-1}}\|\mathbf x-\Phi \lambda\| =\min_{\mathbf y\in \mathrm B}\|\mathbf x-\mathbf y\|$.
Thus, by convex analysis, this last property characterizes  $\mathbf{z}$ as the unique orthogonal projection of $\mathbf{x}$ onto $\mathrm{B}$ (even if $\lambda_{\mathbf x}$ is not unique).
}; consequently, the orthogonal projection of $\mathbf{x} \in \mathbb{R}^n$ onto the closed convex set $\mathrm{B}$ is given by $\sum_{s \in [S]} (\lambda_{\mathbf{x}})_s \, \mathbf{x}^s$ \cite{tyrrell1970convex}. Moreover, problem \eqref{eq: BCanalysis} can be rewritten as
\begin{equation}\label{eq: BCanalysis2}
    \mathbf x\longmapsto\lambda_{\mathbf x}:=\argmin_{\lambda\in \Delta_{S-1}}\|\mathbf x -\mathbf x_\lambda\|^2 ,
\end{equation}
where $\mathbf x_\lambda$ is a synthesized element as in \eqref{eq: bary ad convex comb} or \eqref{eq: bary ad convex comb2}.

With more generality, let us consider an ambient space $\mathrm G$ that is not necessarily convex, nor contained in a vector space, but endowed with a metric $dist(\cdot,\cdot)$. Consider also a \emph{divergence} $d_a(\cdot, \cdot)$, that is, a non-negative function defined on $\mathrm G\times \mathrm G$ satisfying the identity of indiscernibles (i.e., $d_a(\mathbf{x},\mathbf{y})=0$ if and only if $\mathbf{x}=\mathbf{y}$), but not necessarily symmetric and/or not necessarily satisfying the triangle inequality (for readability, it can be assumed $d_a(\cdot,\cdot)=dist(\cdot,\cdot)$).  Given finitely many
template points $\mathbf{x}^1, \dots, \mathbf{x}^S \in \mathrm G$, for some $S \in \mathbb{N}$, we can formulate the following \emph{synthesis} and \emph{analysis} problems, borrowing terminology from harmonic analysis. 
 The \emph{barycenter synthesis problem} is 
    \begin{gather}\label{eq: synthesis}
        \Delta_{S-1}\ni\lambda\longmapsto \mathbf{x}_\lambda\in\argmin_{\mathbf x\in \mathrm G}\sum_{s\in [S]}\lambda_s \, dist^2(\mathbf{x}^s,\mathbf{x}).
    \end{gather}
The \emph{barycenter analysis problem} is
    \begin{gather}\label{eq: analysis}
        \mathrm G \ni \mathbf{x}\longmapsto \lambda_\mathbf{x}\in\argmin_{\lambda\in \Delta_{S-1}}d^2(\mathbf{x},\mathbf{x}_\lambda),
    \end{gather}
    where each $\mathbf{x}_\lambda$ is any point defined in \eqref{eq: synthesis}. 
 Note that \eqref{eq: synthesis} and \eqref{eq: analysis} are the generalizations of problem \eqref{eq: bary ad convex comb2} and problem \eqref{eq: BCanalysis} (or \eqref{eq: BCanalysis2}), respectively.

\begin{definition}\label{def: bary general}
    The \emph{barycentric space} generated by the reference points $\{\mathbf{x}^s\}_{s\in [S]}$ is defined by
          $\mathrm{Bary}(\{\mathbf{x}^s\}_{s\in [S]}) := \{\mathbf{x}_\lambda \text{ solution of } \eqref{eq: synthesis}\mid \, \lambda\in \Delta_{S-1}\}.$
\end{definition}

\begin{remark}\label{rem: for any div}
    Notice that $\mathbf{x}\in \mathrm{Bary}(\{\mathbf{x}^s\}_{s\in [S]})$ if and only if $\mathbf{x}=\mathbf{x}_{\mathbf{\lambda}}$ for some $\mathbf{\lambda}\in \Delta_{S-1}$; equivalently, if and only if $d_a(\mathbf{x},\mathbf{x}_{\mathbf{\lambda}})=0$ for some $\mathbf{\lambda}\in \Delta_{S-1}$, \emph{for any} choice of divergence $d_a(\cdot,\cdot)$. However, in general, $\mathrm{Bary}(\{\mathbf{x}^s\}_{s\in [S]})$ is not necessarily a convex set, in particular is not necessarily the convex hull of the points $\{\mathbf{x}^s\}_{s\in [S]}$. 
\end{remark}

In this general context, the problem of estimating an unknown point $\mathbf{x}$ under the \emph{barycentric coding model}
(BCM) consists of assuming that $\mathbf{x}\in\mathrm{Bary}(\{\mathbf{x}^s\}_{s\in [S]})$ (\emph{model}), for a fixed set of points $\{\mathbf{x}^s\}_{s\in [S]}$, and estimating the unknown barycentric coordinates $\lambda\in\Delta_{S-1}$ such that a solution $\mathbf{x}_\lambda$ of the optimization problem \eqref{eq: synthesis} approximates the original point $\mathbf{x}$.  

\subsection{BCM: Classical OT vs. Euclidean framework}\label{sec: OT}

To illustrate the qualitative differences between geometric models, we consider the problem of interpolating between two weighted point clouds representing handwritten digits. This corresponds to visualizing the barycentric space generated by two template point clouds. Using the Point Cloud MNIST 2D dataset \cite{Garcia2023PointCloudMNIST2D}, we examine how the choice of geometry influences the resulting intermediate shapes. Fig. \ref{fig:OT} compares the interpolations obtained using Wasserstein geometry versus using standard Euclidean geometry. The reader can also compare Figure \ref{fig:OT} with Figure \ref{fig: GW bary space} in the main text. 

\begin{figure}[h!]
    \centering
    \includegraphics[width=0.8\linewidth]{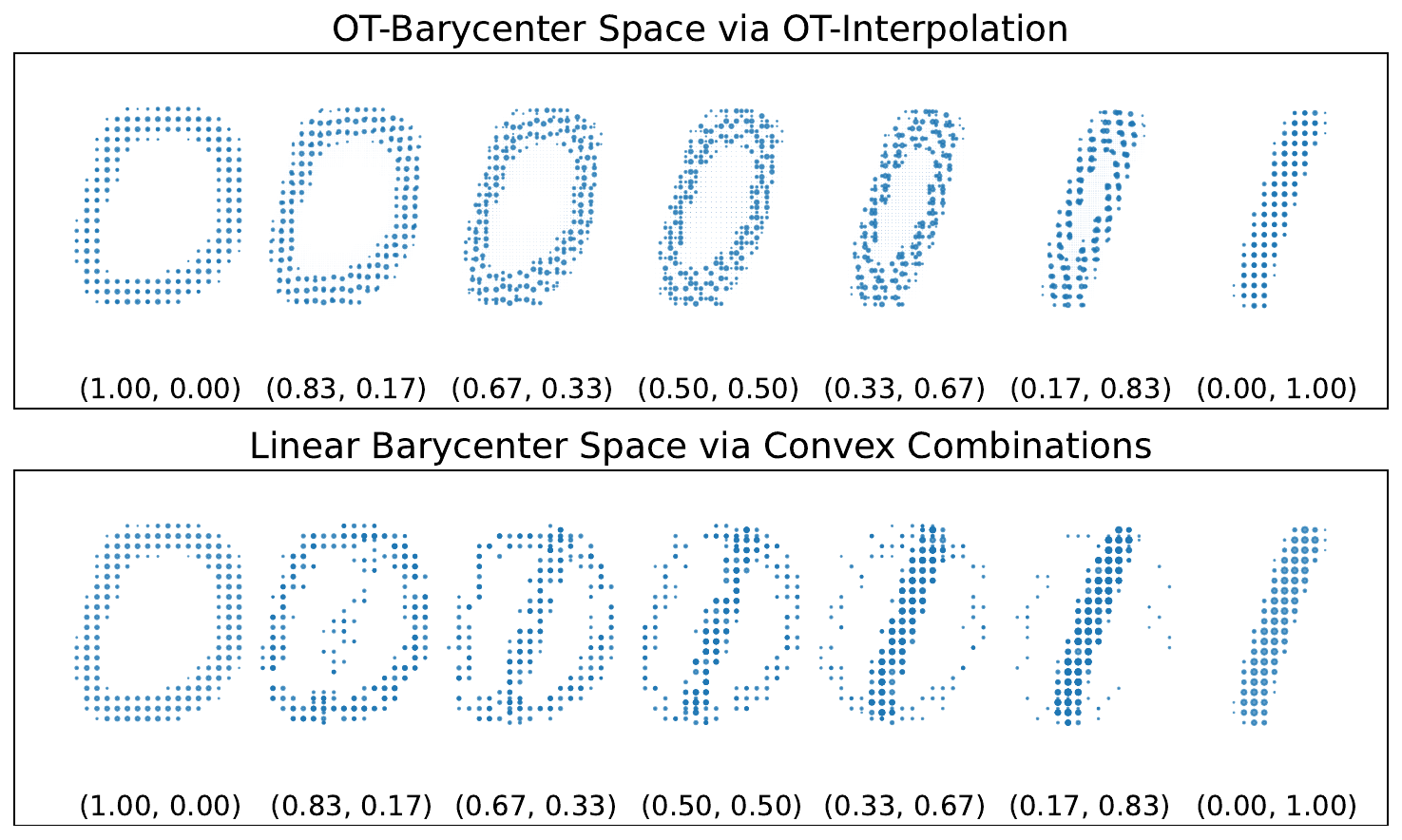}
    \caption{\small{Illustration of different barycenter spaces between two point clouds selected from the Point Cloud MNIST 2D dataset \cite{Garcia2023PointCloudMNIST2D}, shown  for various $t \in [0,1]$ with interpolation coordinates $(1-t, t)$ on the horizontal axis. Top:  OT interpolations (with respect to the 2-Wasserstein geometry). Bottom: Linear interpolations (with respect to the Euclidean geometry).}}
    \label{fig:OT}
\end{figure}

Analytically, consider the space of discrete probability measures with finite support on $\mathbb{R}$, equipped with the 2-Wasserstein metric $W_2(\cdot,\cdot)$. Let the template measures be $\delta_0$ and $\delta_1$, where $\delta_x$ denotes the Dirac measure centered at $x$. The corresponding barycenter space consists of the set $\{\delta_t :\, t \in [0,1]\}$, which is obtained by solving the synthesis problem \eqref{eq: syn} (in the main text) in this specific setting. In other words, each measure $\mu = \delta_t$ minimizes the functional $\mu \mapsto t\, W_2^2(\delta_0, \mu) + (1 - t)\, W_2^2(\delta_1, \mu)$.  Hence, we can interpret the coordinates of the measure $\delta_t$ with respect to the templates $\delta_0$ and $\delta_1$ as the vector $(t, 1 - t)$ in $\Delta_1$. In this simple example, barycenters correspond to Dirac masses supported on convex combinations of the points $0$ and $1$--that is, Euclidean barycenters. In more general settings, however, measures or distributions have the ability to represent more complex data than individual points in space. 
For instance, interpolating between two templates can be understood as constructing the barycentric space generated by those reference templates. Specifically, if $\mu_0$ and $\mu_1$ are the probability distributions corresponding to two templates, and $\pi$ is an OT plan between them in the 2-Wasserstein sense, then the interpolated measure $\mu_t$ defined as the pushforward of $\pi$ under the map $(x, y) \mapsto (1 - t)x + t y$, that is, $\mu_t := ((1 - t)x + t y)_{\#} \pi$ for $t \in [0, 1]$, coincides with the {Wasserstein barycenter} of $\mu_0$ and $\mu_1$ with respective weights $(1 - t)$ and $t$:
$\mu_t = \argmin_{\mu} \left\{ (1 - t) W_2^2(\mu, \mu_0) + t W_2^2(\mu, \mu_1) \right\}$.

Finally, we notice that one of the main differences between the 2-Wasserstein distance and the GW distance considered in this work, lies in how they support barycenter constructions. In the $W_2$ setting, barycenters between two or more measures can be constructed via displacement interpolation using OT maps under regularity assumptions \cite{mccann1997convexity, agueh2011barycenters}. This interpolation aligns mass in a common ambient space and is underpinned by the pseudo-Riemannian structure of Wasserstein space \cite{zemel2019frechet}.
In contrast, GW distance compares measures on different metric spaces by optimizing over structural correspondences rather than spatial displacements. While geodesics between two metric measure spaces exist in GW space and can yield meaningful barycenters (see Figure \ref{fig: GW bary space} in the main text), there is no analogous displacement interpolation for more than two templates. This reflects the absence of a pseudo-Riemannian framework (in the classical sense \cite{otto2001geometry, ambrosio2008gradient}), limiting the direct geometric interpretation of GW barycenters.

\section{Generalities on the Gromov-Wasserstein Problem}\label{app: generalities}

\subsection{GW Distance}\label{app: GW dist}

\begin{proposition}\label{prop: convex}   
The  minimization problem \eqref{eq: rest_gw}  is convex (respectively, strictly convex) if the input matrices $\mathbf{X}$ and $\mathbf{Y}$ are symmetric positive definite (respectively, symmetric positive definite).
\end{proposition}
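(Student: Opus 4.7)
Plan: Since $\Pi(\p,\q)$ is a convex polytope, it suffices to show that the objective $F(\pi):=\sum_{i,j,k,l}|\mathbf X_{ik}-\mathbf Y_{jl}|^2\pi_{ij}\pi_{kl}$ is convex (respectively strictly convex) as a function of $\pi\in\mathbb R^{N\times M}$. My route is to identify $F$ as a quadratic form in $\pi$ and certify the definiteness of its Hessian via the Kronecker product structure inherited from $\mathbf X$ and $\mathbf Y$.

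First I would expand $|\mathbf X_{ik}-\mathbf Y_{jl}|^2 = \mathbf X_{ik}^2 - 2\mathbf X_{ik}\mathbf Y_{jl}+\mathbf Y_{jl}^2$ and use the marginal constraints $\sum_j\pi_{ij}=\p_i$, $\sum_i\pi_{ij}=\q_j$ to collapse the $\mathbf X_{ik}^2\pi_{ij}\pi_{kl}$ and $\mathbf Y_{jl}^2\pi_{ij}\pi_{kl}$ terms into $\pi$-independent constants $\sum_{i,k}\mathbf X_{ik}^2\p_i\p_k$ and $\sum_{j,l}\mathbf Y_{jl}^2\q_j\q_l$ on $\Pi(\p,\q)$. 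Convexity of $F$ on the feasible set then reduces to convexity of the cross term $\Phi(\pi):=-2\sum_{i,j,k,l}\mathbf X_{ik}\mathbf Y_{jl}\pi_{ij}\pi_{kl}$.

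Next, using the symmetry of $\mathbf X$, I will rewrite $\Phi(\pi)=-2\,\mathrm{tr}(\mathbf X\pi\mathbf Y\pi^T)$ and vectorize to obtain a quadratic form $\mathrm{vec}(\pi)^T H\,\mathrm{vec}(\pi)$ with $H$ a scalar multiple of $\mathbf X\otimes\mathbf Y$. By Kronecker product spectral theory, the eigenvalues of $\mathbf X\otimes\mathbf Y$ are the pairwise products $\lambda_i(\mathbf X)\lambda_j(\mathbf Y)$, so the PSD (resp.\ PD) hypotheses on $\mathbf X$ and $\mathbf Y$ translate into definiteness of $\mathbf X\otimes\mathbf Y$, from which the (strict) convexity of $F$ on the polytope should follow. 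In the PD case, strictness exploits invertibility of the factors $U,V$ in $\mathbf X=UU^T$, $\mathbf Y=VV^T$, which makes $\pi\mapsto U^T\pi V$ injective.

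The step I expect to be the main obstacle is the sign bookkeeping at the Hessian stage: the coefficient $-2$ in front of the cross term means that the bare Hessian on $\mathbb R^{N\times M}$ is a negative scalar multiple of $\mathbf X\otimes\mathbf Y$, so a naive PSD conclusion runs in the wrong direction for the convexity claim. To close this gap I would carry out the definiteness analysis on the tangent subspace $\{h:h\mathbf 1_M=0,\ \mathbf 1_N^T h=0\}$ of the equality constraints cutting out $\Pi(\p,\q)$, where cancellations from the doubly-stochastic structure might produce the required sign. As a parallel strategy I would attempt a direct sum-of-squares representation: starting from $\mathbf X=UU^T$, $\mathbf Y=VV^T$, search for an identity $F(\pi)=\|U^T\pi V-W(\pi)\|_F^2+c$ with $W$ affine in $\pi$ and $c$ constant on $\Pi(\p,\q)$, which would render convexity manifest without invoking the Hessian at all.
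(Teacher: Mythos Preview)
Your plan mirrors the paper's argument almost verbatim: expand the square, use the marginal constraints to reduce the $\pi$-dependent part on $\Pi(\p,\q)$ to the cross term $-2\,\mathrm{tr}(\pi^T\mathbf{X}\pi\mathbf{Y})$, and then analyze the Hessian of $\pi\mapsto\mathrm{tr}(\pi^T\mathbf{X}\pi\mathbf{Y})$, which is $2(\mathbf{X}\otimes\mathbf{Y})$ and hence positive (semi)definite by the product-of-eigenvalues fact. The paper stops precisely there and declares the minimization problem convex; it does not comment on the $-2$ coefficient.

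The obstacle you flag---that the actual objective carries a factor $-2$ in front of this convex quadratic, so its Hessian on the feasible affine set is $-4(\mathbf{X}\otimes\mathbf{Y})$---is real, and your two proposed repairs cannot close it. Restricting to the tangent space $\{h:h\mathbf{1}_M=0,\ \mathbf{1}_N^Th=0\}$ preserves concavity, since a concave function restricted to any affine subspace remains concave; no ``cancellations from the doubly-stochastic structure'' can flip the sign of a semidefinite restriction. A direct check: for $N=M=2$, $\mathbf{X}=\mathbf{Y}=\mathrm{diag}(1,2)$, $\p=\q=(\tfrac12,\tfrac12)$, the feasible set is a segment parametrized by $t=\pi_{11}\in[0,\tfrac12]$, and the objective reduces to $-18t^2+8t+\tfrac12$, strictly concave in $t$. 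The sum-of-squares route fails for the same reason: any identity $F(\pi)=\|A(\pi)\|_F^2+c$ with $A$ affine and $c$ constant on $\Pi(\p,\q)$ would force convexity on that set, contradicting the example. So the sign issue you isolated is not a bookkeeping artifact to be patched but a genuine feature of the objective; the step the paper passes over silently is exactly where both your argument and theirs run aground.
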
 

\begin{proof}
On the one hand, the set of couplings $\Pi(\p,\q)$ is convex.  On the other hand, we first note that in general, by using the constraint $\pi\in\Pi(\p,\q)$, the cost function in the minimization problem \eqref{eq: rest_gw} can be rewritten as
    \begin{align*}
        &\sum_{i\in[N],j\in[M]}\sum_{k\in[N],l\in[M]}|\mathbf X_{ik}- \mathbf Y_{jl}|^2\pi_{ij}\pi_{kl}\\
        &= \sum_{i,k\in[N]}(\mathbf X_{ik})^2\p_i\p_k
        -2\underbrace{\sum_{i,k\in[N],j,l\in[M]}\mathbf X_{ik} \mathbf Y_{jl}\pi_{ij}\pi_{kl}}_{\mathrm{tr}(\pi^T\mathbf X^T\pi \mathbf Y)}+
        \sum_{j,l\in[M]}(\mathbf Y_{jl})^2\q_j\q_l,
    \end{align*}
    where only the second term depends on the coupling $\pi$.
    It is easy to check that, for each $i_0\in[N], j_0\in[M]$,
    \begin{equation*}
        \frac{\partial}{\partial \pi_{i_0j_0}}\mathrm{tr}(\pi^T\mathbf X^T\pi \mathbf Y)=2 \, \sum_{k\in[N],l\in[M]}\mathbf X_{i_0k}\pi_{kl} \mathbf Y_{j_0l}, 
    \end{equation*}
    and 
    \begin{equation*}
        \frac{\partial^2}{\partial \pi_{k_0l_0}\partial \pi_{i_0j_0}}\mathrm{tr}(\pi^T\mathbf X^T\pi \mathbf Y)=2\mathbf X_{i_0k_0}\mathbf Y_{j_0l_0}.
    \end{equation*}
    Thus, the first-order derivative of the functional $\pi\mapsto \mathrm{tr}(\pi^T\mathbf X^T\pi \mathbf Y)$ with respect to $\pi$ is 
    $ 2 \mathbf{X} \pi \mathbf{Y}$
    and the Hessian is $2 (\mathbf{X} \otimes \mathbf{Y}),$
    where \(\otimes\) denotes the Kronecker product.

   If both $\mathbf{X}$ and $\mathbf{Y}$ are symmetric positive 
    semidefinite matrices, then so it is $\mathbf{X} \otimes \mathbf{Y}$ because the eigenvalues of the $MN\times MN$ matrix $ \mathbf{X} \otimes \mathbf{Y}$ are given by the products of the eigenvalues of $\mathbf{X}$ and $\mathbf{Y}$. Thus, in this case the minimization problem \eqref{eq: rest_gw} is convex (it is strictly convex if the input matrices $\mathbf{X}$ and $\mathbf{Y}$ are symmetric positive definite). 
\end{proof}

\begin{remark}
See also \cite{chowdhury2020gromov}, where the above result is proven in the case where $\mathbf{X}$ and $\mathbf Y $ are symmetric positive definite matrices, using the Cholesky decomposition of the given matrices.    
\end{remark}

\subsection{GW Synthesis Problem -- Fixed-Point Iteration Scheme}\label{app: gw fp synth}

In this section we consider the GW synthesis problem \eqref{eq: solomon_barycenter}. First, for completeness, we provide a proof of Proposition \ref{remark: bary_fix_pi} (see also \cite[Prop. 3]{peyre2016gromov}), followed by several remarks. 

\begin{proof}[Proof of Proposition \ref{remark: bary_fix_pi}]
Notice that, as in the proof of Proposition \ref{prop: convex}, if we use the marginal constraints $\pi^s\in \Pi(\p^s,\q)$, that is, 
\begin{equation*}
  \sum_{j\in[M]}{\pi_{ij}^s}=\p_i^s \qquad  \forall i\in [N^s] \qquad  \text{ and } \qquad \sum_{i\in [N^s]} \pi_{ij}^s=\q_j \qquad  \forall j\in [M],  
\end{equation*}
then the objective function $J_\lambda$ 
can be rewritten as
\begin{align}
   0\leq J_\lambda (\mathbf Y) &=
       \sum_{s\in [S]}\lambda_s    \sum_{i\in[N^s],j\in[M]}\sum_{k\in[N^s],l\in[M]}|\mathbf X_{ik}^s- \mathbf Y_{jl}|^2\pi_{ij}^s\pi_{kl}^s \\
       &=
        \sum_{s\in [S]}
        \sum_{i,k\in[N]}\lambda_s(\mathbf X_{ik}^s)^2\p^s_i\p^s_k+
        \sum_{j,l\in[M]}( \mathbf Y_{jl})^2\q_j\q_l \notag\\
        &\qquad -2\sum_{s\in [S]}\sum_{i\in[N^s],j\in[M]}\sum_{k\in[N^s],l\in[M]}\lambda_s\mathbf X_{ik}^s \mathbf Y_{jl}\pi_{ij}^s\pi_{kl}^s. \label{eq: calc}
\end{align}
Thus, $J_\lambda$ is continuous, convex, and nonnegative, (i.e., $J_\lambda (\mathbf Y)\geq 0$ for all $\mathbf Y\in \R^{M\times M}$).
It is easy to check that if $\{\pi^s\}_{s\in [S]}$ are fixed,  minimizing $J_\lambda$ over $ \mathbf Y\in\R^{M\times M}$ yields the gradient expression:
\begin{align}\label{eq: gradient with fix pi}
    \frac{\partial \, J_\lambda}{\partial  \mathbf Y_{j_0\, l_0}}=2 \, \q_{j_0}\q_{l_0} \mathbf Y_{j_0 \, l_0}-2\sum_{s\in [S]}\lambda_s\sum_{i,k\in[N^s]}\mathbf X_{ik}^s\pi_{ij_0}^s\pi_{kl_0}^s.
\end{align}
The Hessian of 
$J_\lambda$ is a $M^2\times M^2$ diagonal matrix with diagonal entries given by:
\begin{equation*}
    \frac{\partial^2 \, J_\lambda}{\partial  \mathbf Y_{j_0\, l_0}^2}= 2 \, \q_{j_0}\q_{l_0}, \qquad \forall l_0,j_0\in [M].    
\end{equation*}
In general, since $\q_j\geq 0$ for all $j$, the Hessian is positive semidefinite, and therefore, by convexity and the fact that the objective function is nonnegative, a minimizer exists.
Moreover, if $\q_j>0$ for all $j$, the Hessian is positive definite, implying that the problem admits a unique global minimum. The entries of the minimum
$\mathbf Y^*\in \mathbb R^{M\times M}$ are characterized by setting the gradient expression \eqref{eq: gradient with fix pi} to zero, yielding the closed-form solution \eqref{eq: bary_fix_pi}.
\end{proof}

\begin{remark}
Notice that if $\{\mathbf X^s\}_{s\in [S]}$ are symmetric, then formula \eqref{eq: bary_fix_pi} defines a symmetric matrix $\mathbf Y^*$.
Similarly, if the matrices $\{\mathbf X^s\}_{s\in [S]}$ have non-negative entries or are positive semidefinite, then $\mathbf Y^*$ will also have non-negative entries or be positive semidefinite \cite[Prop. 3]{peyre2016gromov}. 
However, in general, $\mathbf Y^*$ does not have zero-trace (or zero-diagonal), even if the template matrices $\{\mathbf X^s\}_{s\in[S]}$ have zero trace. Therefore, even if the templates are distance matrices, $\mathbf Y^*$ given by \eqref{eq: bary_fix_pi} does not necessarily define a distance matrix. 
\end{remark}

\begin{proposition}[Existence of a Solution for the GW Synthesis Problem \eqref{eq: solomon_barycenter}]\label{remark: existence min synth}
Consider a set of finite templates $\{( \mathbf X^s, \p^s)\in \R^{N^s\times N^s} \times \mathcal{P}_{N^s}\}_{s\in[S]}$. Fix $\lambda\in \Delta_{S-1}$, a size $M\in \N$, and a probability vector $\q\in \mathcal{P}_M$. Consider the functional $H:\R^{M\times M}\times \Pi(\p^1,\q)\times\dots \times\Pi(\p^S,\q)\to \R$ given by
\begin{equation*}
    H(\mathbf Y,\pi^1,\dots, \pi^S):=\sum_{s\in[S]}\lambda_s    \sum_{i\in[N^s],j\in[M]}\sum_{k\in[N^s],l\in[M]}|\mathbf X_{ik}^s- \mathbf Y_{jl}|^2\pi_{ij}^s\pi_{kl}^s .
\end{equation*}
If we assume the extra hypothesis that all fixed matrices $\mathbf X^1,\dots, \mathbf X^S$ and the variable matrix $\mathbf Y$ have non-negative entries, then there exists a minimizer $(\mathbf Y^*,(\pi^1)^*,\dots,(\pi^S)^*)$ of $H$ and problem \eqref{eq: solomon_barycenter} has at least one solution $\mathbf Y_\lambda$.
\end{proposition}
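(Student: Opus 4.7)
The strategy is to reformulate the synthesis problem \eqref{eq: solomon_barycenter} as the joint minimization of $H$ over its full domain, and then apply a Weierstrass-type argument. Expanding the squares as in \eqref{eq: multi_min} gives
\begin{equation*}
    \min_{\mathbf{Y}\geq 0}\sum_{s\in[S]}\lambda_s GW((\mathbf{X}^s,\p^s),(\mathbf{Y},\q))^2 = \min_{\mathbf{Y}\geq 0}\min_{\{\pi^s\in\Pi(\p^s,\q)\}_{s=1}^S} H(\mathbf{Y},\pi^1,\dots,\pi^S),
\end{equation*}
so a minimizer of $H$ on the joint domain induces a solution $\mathbf{Y}_\lambda$ of \eqref{eq: solomon_barycenter}. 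The functional $H$ is clearly continuous as a polynomial in its arguments. The coupling polytopes $\Pi(\p^s,\q)$ are compact convex subsets of $\R^{N^s\times M}$, but the main obstacle is that the admissible set for $\mathbf{Y}$, namely $\R^{M\times M}_{\geq 0}$, is not compact, so continuity alone is insufficient. The hard part will therefore be to establish coercivity of $H$ in the $\mathbf{Y}$ variable, \emph{uniformly} over the coupling variables, which is where the non-negativity hypotheses enter.

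For the coercivity step, I will use the expansion established in \eqref{eq: calc}, namely
\begin{equation*}
    H(\mathbf{Y},\pi^1,\dots,\pi^S) = A + \sum_{j,l\in[M]}\mathbf{Y}_{jl}^2\,\q_j\q_l - 2\sum_{s\in[S]}\lambda_s\sum_{\substack{i,k\in[N^s]\\ j,l\in[M]}}\mathbf{X}^s_{ik}\,\mathbf{Y}_{jl}\,\pi^s_{ij}\pi^s_{kl},
\end{equation*}
where $A\geq 0$ depends only on the templates. Since $\q\in\mathcal{P}_M$ has strictly positive entries, setting $\q_{\min}:=\min_j \q_j>0$ yields $\sum_{j,l}\mathbf{Y}_{jl}^2\q_j\q_l \geq \q_{\min}^2\|\mathbf{Y}\|_{\mathrm{Frob}}^2$. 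For the cross term, the non-negativity of $\mathbf{X}^s$ and $\mathbf{Y}$ together with the marginal constraints $\pi^s_{ij}\leq \min(\p^s_i,\q_j)\leq 1$ give a bound of the form
\begin{equation*}
    \left|\sum_{s,i,j,k,l}\lambda_s\mathbf{X}^s_{ik}\mathbf{Y}_{jl}\pi^s_{ij}\pi^s_{kl}\right| \leq C\sum_{j,l}\mathbf{Y}_{jl} \leq C\,M\,\|\mathbf{Y}\|_{\mathrm{Frob}},
\end{equation*}
with $C$ depending only on $\max_s\|\mathbf{X}^s\|_\infty$ and the cardinalities $N^s$. Combining these two estimates gives $H \geq \q_{\min}^2\|\mathbf{Y}\|_{\mathrm{Frob}}^2 - 2CM\|\mathbf{Y}\|_{\mathrm{Frob}}$, independently of the couplings, which tends to $+\infty$ as $\|\mathbf{Y}\|_{\mathrm{Frob}}\to\infty$.

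Consequently, there exists $R>0$ such that $H(\mathbf{Y},\pi^1,\dots,\pi^S)>H(\mathbf{0},\pi^1,\dots,\pi^S)$ for every $\mathbf{Y}\geq 0$ with $\|\mathbf{Y}\|_{\mathrm{Frob}}>R$ and every admissible family of couplings. The infimum of $H$ is therefore unchanged when restricting to the compact set
\begin{equation*}
    \mathcal{D}:=\{\mathbf{Y}\in\R^{M\times M}_{\geq 0}\,:\,\|\mathbf{Y}\|_{\mathrm{Frob}}\leq R\}\times \Pi(\p^1,\q)\times\cdots\times\Pi(\p^S,\q).
\end{equation*}
Since $H$ is continuous on $\mathcal{D}$ and $\mathcal{D}$ is compact, the Weierstrass extreme value theorem yields a minimizer $(\mathbf{Y}^*,(\pi^1)^*,\dots,(\pi^S)^*)$. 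By the reformulation above, the $\mathbf{Y}^*$-component is a solution $\mathbf{Y}_\lambda$ to the synthesis problem \eqref{eq: solomon_barycenter}, with the $(\pi^s)^*$ serving as GW-optimal plans between $(\mathbf{X}^s,\p^s)$ and $(\mathbf{Y}_\lambda,\q)$.
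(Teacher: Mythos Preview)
Your proposal is correct and follows essentially the same approach as the paper's proof: both expand $H$ via \eqref{eq: calc}, use the non-negativity hypotheses together with $\pi^s_{ij}\le 1$ and the strict positivity of $\q$ to obtain a uniform coercivity bound $H\ge c_1\|\mathbf Y\|_{\mathrm{Frob}}^2 - c_2\|\mathbf Y\|_{\mathrm{Frob}}$, then restrict to a compact product set and apply Weierstrass. The only cosmetic difference is that you compare against $H(\mathbf 0,\cdot)$ to choose $R$, whereas the paper works directly with the infimum level $a^*$; both routes yield the same conclusion.
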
  

Note that $\Pi(\p^1,\q)\times\dots \times\Pi(\p^S,\q)$ is compact, since it is the product of compact sets, and  
$H$ is continuous and non-negative.
However, $H$ is not convex and the variable  $\mathbf Y$ does not belong to a compact set. For this reason, we included the additional hypothesis in the above statement.

\begin{proof}[Proof of Proposition \ref{remark: existence min synth}]

Since $H$ is non-negative,
let
\begin{equation*}
    a^*:=\inf\{    H(Y,\pi^1,\dots, \pi^S)\mid \, \mathbf Y\in \R^{M\times M}_{\geq 0}, \, \pi^s\in \Pi(\p^s,\q) \, \forall s\in [S]\},
\end{equation*}
where $\R^{M\times M}_{\geq 0}$ denotes the subset of $\R^{M\times M}$ of all $M\times M$ matrices with non-negative entries.
Notice that since the vectors $\p^1,\dots,\p^S$, and $\q$ are fixed,  we can rewrite the functional $H$ as 
\begin{align*}
    H(\mathbf Y,\pi^1,\dots, \pi^S)&=
       \sum_{s\in [S]}\lambda_s    \sum_{i\in[N^s],j\in[M]}\sum_{k\in[N^s],l\in[M]}|\mathbf X_{ik}^s- \mathbf Y_{jl}|^2\pi_{ij}^s\pi_{kl}^s \\
       &=
        \sum_{s\in [S]}
        \sum_{i,k\in[N]}\lambda_s(\mathbf X_{ik}^s)^2\p^s_i\p^s_k+
        \sum_{j,l\in[M]}( \mathbf Y_{jl})^2\q_j\q_l\\
        &\qquad -2\sum_{s\in [S]}\sum_{i\in[N^s],j\in[M]}\sum_{k\in[N^s],l\in[M]}\lambda_s\mathbf X_{ik}^s \mathbf Y_{jl}\pi_{ij}^s\pi_{kl}^s
\end{align*}
(as in \eqref{eq: calc} in the previous proof). Thus, when working under the hypothesis that all the matrices have non-negative entries, we have
\begin{align}
    H(\mathbf Y,\pi^1,\dots, \pi^S)&\geq \sum_{j,l\in[M]}( \mathbf Y_{jl})^2\q_j\q_l-2\sum_{s\in [S]}\sum_{i\in[N^s],j\in[M]}\sum_{k\in[N^s],l\in[M]}\lambda_s\mathbf X_{ik}^s \mathbf Y_{jl}\pi_{ij}^s\pi_{kl}^s \notag \\
    &\geq \sum_{j,l\in[M]}( \mathbf Y_{jl})^2\q_j\q_l-\left(2\sum_{s\in [S]}\sum_{i,k\in[N^s]}\lambda_s\mathbf X_{ik}^s \right)\sum_{j,l\in[M]}\mathbf Y_{jl} \notag\\
    &\geq k_1 \|\mathbf Y\|_{\mathrm{Frob}}^2-k_2\|\mathbf Y\|_{\mathrm{Frob}}\label{eq: aux coer}
\end{align}
for some positive constants $k_1,k_2$ (which do not depend on $\pi^s$), 
where we used that all the norms in $\R^{M\times M}$ are equivalent, and previously we used that $0\leq \pi_{ij}^s\leq 1$. Thus, \eqref{eq: aux coer} shows that if $\|\mathbf Y\|_{\mathrm{Frob}}\to \infty$, then
$H(\mathbf Y,\pi^1,\dots, \pi^S)\to \infty$.
This implies that for all $a> a^*$, there exists $R>0$ such that
\begin{equation*}
    a^*<a\leq H(\mathbf Y, \pi^1,\dots,\pi^s) \qquad \forall \mathbf Y\in \R^{M\times M}_{\geq 0}\setminus B_R,
\end{equation*}
where 
\begin{equation*}
    B_R:=\left\{\mathbf Y\in\R^{M\times M}_{\geq 0} \mid \, \|\mathbf Y\|_{\mathrm{Frob}}\leq R \right\}.
\end{equation*}
Thus, 
\begin{align*}
    a^* 
    &=\inf\{    H(Y,\pi^1,\dots, \pi^S)\mid \, \mathbf Y\in B_R, \, \pi^s\in \Pi(\p^s,\q) \, \forall s\in [S]\}\\
    &=\min\{    H(Y,\pi^1,\dots, \pi^S)\mid \, \mathbf Y\in B_R, \, \pi^s\in \Pi(\p^s,\q) \, \forall s\in [S]\},
\end{align*}
where the last expression holds since $H$ is continuous and we are minimizing it over a compact set. In particular,
\begin{align*}
    &\inf\{    H(Y,\pi^1,\dots, \pi^S)\mid \, \mathbf Y\in \R^{M\times M}_{\geq 0}, \, \pi^s\in \Pi(\p^s,\q) \, \forall s\in [S]\}\\
    &=    \min\{    H(Y,\pi^1,\dots, \pi^S)\mid \, \mathbf Y\in \R^{M\times M}_{\geq 0}, \, \pi^s\in \Pi(\p^s,\q) \, \forall s\in [S]\}.
 \end{align*}
Finally, from the general fact that given a function $f$, if $f(x_0,y_0)=\min_{x,y}f(x,y)$ for some point $(x_0,y_0)$, then $\min_{x,y}f(x,y)=\min_x\min_yf(x,y)$, we obtain  
  \begin{align*}  
   & \min\{    H(Y,\pi^1,\dots, \pi^S)\mid \, \mathbf Y\in \R^{M\times M}_{\geq 0}, \, \pi^s\in \Pi(\p^s,\q) \, \forall s\in [S]\}\\
    &=\min_{\mathbf Y\in \R^{M\times M}_{\geq 0}}\min_{\{\pi^s\in \Pi(\pi^s,\q)\}_{s=1}^S} H(Y,\pi^1,\dots, \pi^S)\\
    &=\min_{\mathbf Y\in \R^{M\times M}_{\geq 0}}\sum_{s\in[S]}\lambda_s GW(( \mathbf X^s, \p^s),( \mathbf Y, \q))^2,
\end{align*}
that is, the last minimum is well-defined, meaning that there exists  at least one solution $\mathbf Y_\lambda \in \R^{M\times M}_{\geq 0}$ of problem \eqref{eq: solomon_barycenter} under the assumption that the template matrices have non-negative entries. 
\end{proof}

\begin{example}[Illustrative Example Related to Remark \ref{remark: does not depend on the plan}]\label{example: simple}
To illustrate the point made in Remark \ref{remark: does not depend on the plan} more clearly, let us analyze the following simple example. For a single template $(\mathbf X,\p)\in \R^{N\times N}\times \mathcal{P}_N$ (i.e., $S=1$), consider the particular case of problem \eqref{eq: solomon_barycenter} for $M=N$ and $\q=\p$, that is, 
\begin{equation*}
    \min_{ \mathbf Y\in \R^{N\times N}} GW((\mathbf X,\p),(\mathbf Y, \p))^2.
\end{equation*} 
Clearly, $\mathbf X\in  \argmin_{ \mathbf Y\in \R^{N\times N}} GW((\mathbf X,\p),(\mathbf Y, \p))^2$.  Then, Theorem \ref{prop: formula_bary} implies that
\begin{equation}\label{eq: x = x}
    \mathbf X = \frac{1}{\p\p^T}\odot(\pi^*)^T\mathbf X\pi^* \qquad \forall \pi^*\in \Pi(\p,\p) \text{ optimal for } GW((\mathbf X,\p),(\mathbf X, \p)).
\end{equation}
To reinforce \eqref{eq: x = x}, let us verify it by exhaustion in the case $N=M=2$, $\mathbf X$ anti-diagonal matrix and $\p$ uniform. That is, consider $\mathbf X=\begin{pmatrix}
    0&c\\
    c&0
\end{pmatrix}$ for some $c>0$ and $\p=(.5,.5)$. Then 
\begin{equation*}
    \pi_1=\begin{pmatrix}
    .5&0\\
    0&.5
\end{pmatrix}\qquad \text{ and } \qquad 
    \pi_2=\begin{pmatrix}
    0&.5\\
    .5&0
\end{pmatrix}
\end{equation*}
are all the possible \emph{optimal} plans for $ GW((\mathbf X,\p),(\mathbf X, \p))$.  
It is easy to check that in fact
\begin{equation*}
    {\mathbf X}_1^*:=\frac{1}{\p\p^T}\odot\pi_1^T\mathbf X\pi_1=\mathbf X \qquad \text{ and } \qquad {\mathbf X}_2^*:=\frac{1}{\p\p^T}\odot\pi_2^T\mathbf X\pi_2=\mathbf X^T=\mathbf X,
\end{equation*}    
showing that ${\mathbf X}_1^*={\mathbf X}_2^*$ and so further supporting that \eqref{eq: bary_fix_pi} does not depend on the choice of the GW OT plans.
\end{example}

\subsection{Weak Isomorphisms}\label{app: weak}\,

Let $\mathbb X, \mathbb Y$ be two mm-spaces. By \cite[Thm. 5.1]{memoli2011gromov}  
 $GW(\mathbb X, \mathbb Y)=0$ if and only if $\mathbb X$ and $\mathbb Y$ are strongly isomorphic. In more generality, if $\mathbb X, \mathbb Y$ are two networks, then $GW(\mathbb X, \mathbb Y)=0$ if and only if $\mathbb X$ and $\mathbb Y$ are weakly isomorphic \cite[Thms. 2.3 and 2.4]{chowdhury2017distances}. 
 As a consequence,  two mm-spaces are strongly isomorphic if and only if they are weakly isomorphic. To gain some intuition we provide a proof for the finite case. The proof of the general case follows the same ideas but requires some technical results such as \cite[Lem. 2.2 or Lem. 10.4]{memoli2011gromov}.

\begin{proposition}\label{prop: weak+ metric structure implies strong}
    If two finite metric measure spaces are weakly isomorphic, then they are strongly isomorphic.
\end{proposition}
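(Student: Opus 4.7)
My plan is to leverage the coupling induced by the common resolution $(Z,\mu_Z)$ and exploit the definiteness property of the metrics $d_X, d_Y$ (the crucial feature that distinguishes mm-spaces from general networks) to upgrade a transport plan into a bijective isometry.

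First, I would set $X=\{x_i\}_{i=1}^N$, $Y=\{y_j\}_{j=1}^M$, $\mu_X=\sum_i \p_i\delta_{x_i}$, $\mu_Y=\sum_j \q_j\delta_{y_j}$ with all $\p_i,\q_j>0$ (since the measures are fully supported by Definition \ref{def: mm gm net}). Given the common resolution maps $\varphi_X:Z\to X$ and $\varphi_Y:Z\to Y$ of Definition \ref{def: weak iso}, define the coupling $\pi:=(\varphi_X,\varphi_Y)_\#\mu_Z\in\Pi(\mu_X,\mu_Y)$, identified with an $N\times M$ nonnegative matrix with row sums $\p$ and column sums $\q$. The condition $\|\varphi_X^*d_X-\varphi_Y^*d_Y\|_\infty=0$ then translates, via pushforward to the finite product $X\times Y\times X\times Y$ under $\pi\otimes\pi$, to the \emph{support condition}:
\begin{equation*}
\pi_{ij}>0 \text{ and } \pi_{kl}>0 \;\Longrightarrow\; d_X(x_i,x_k)=d_Y(y_j,y_l).
\end{equation*}

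The main step, which uses the mm-space hypothesis, is to show that $\pi$ is supported on the graph of a bijection. Take $i=k$ in the support condition: if $\pi_{ij}>0$ and $\pi_{il}>0$ for the same $i$, then $d_Y(y_j,y_l)=d_X(x_i,x_i)=0$, so $y_j=y_l$ since $d_Y$ is a metric. Hence each row of $\pi$ has at most one nonzero entry; since $\p_i=\sum_j\pi_{ij}>0$, it has exactly one. Symmetrically (taking $j=l$ and using that $d_X$ is a metric), each column has exactly one nonzero entry. This lets me define a map $\varphi:X\to Y$ by $\varphi(x_i):=y_{j(i)}$ where $j(i)$ is the unique column index with $\pi_{i,j(i)}>0$; the analogous observation on columns shows $\varphi$ is a bijection, and in particular $N=M$.

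It then remains to verify the three properties of strong isomorphism. Measurability of $\varphi$ and $\varphi^{-1}$ is automatic in the finite case. For the pushforward property, observe that $\p_i=\sum_j\pi_{ij}=\pi_{i,j(i)}=\sum_k\pi_{k,j(i)}=\q_{j(i)}$, so $\varphi_\#\mu_X(\{y_{j(i)}\})=\p_i=\q_{j(i)}$, giving $\varphi_\#\mu_X=\mu_Y$. For the isometry, apply the support condition to $(i,j(i))$ and $(k,j(k))$ to obtain $d_X(x_i,x_k)=d_Y(y_{j(i)},y_{j(k)})=d_Y(\varphi(x_i),\varphi(x_k))$. The only subtle point, and the step where the metric (as opposed to merely network) hypothesis is indispensable, is the pigeonhole argument in the second paragraph: without the implication ``$d_Y(y_j,y_l)=0\Rightarrow y_j=y_l$'' the coupling need not collapse to a bijection, which is precisely why the analogous statement fails for general networks (cf.\ Figure \ref{fig: weakiso}).
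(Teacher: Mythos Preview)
Your proof is correct and takes essentially the same approach as the paper: both use the identity of indiscernibles for $d_X,d_Y$ to show that the correspondence induced by the common resolution collapses to a bijection. The only difference is packaging --- the paper defines $\varphi(x):=\varphi_Y(z_x)$ directly from the resolution maps and checks well-definedness, whereas you push forward to the coupling $\pi=(\varphi_X,\varphi_Y)_\#\mu_Z$ and argue that each row and column has exactly one nonzero entry; these are equivalent formulations of the same argument.
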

\begin{proof}
    Let $\mathbb X=(X,d_X,\mu_X)$, $\mathbb Y=(Y,d_Y,\mu_Y)$
    be two metric measure spaces over finite sets
    $X=\{x_i\}_{i=1}^N$ and $Y=\{y_j\}_{j=1}^M$
    such that 
    $\mathbb X\sim^w \mathbb Y$. Assume, without loss of generality that the support of the measures $\mu_X$ and $\mu_Y$ are exactly $X$ and $Y$, respectively. Let $(Z,\mu_Z)$ be a `pivoting' finite measure space and maps $\varphi_X:Z\to X$, $\varphi_Y:Z\to Y$ as in Definition \ref{def: weak iso}. In fact, since the spaces are finite, the second property in the definition of weak isomorphism (Definition \ref{def: weak iso}) takes the form    
    \begin{equation}\label{eq: pull back equality}        d_X(\varphi_X(z),\varphi_X(z'))=d_Y(\varphi_Y(z),\varphi_Y(z')) \qquad \forall z,z'\in Z.
    \end{equation}
    Moreover, $\varphi_X$ and $\varphi_Y$ are surjective functions onto $X$ and $Y$, respectively. This is a consequence of  the first property in the definition of weak isomorphism (Definition \ref{def: weak iso}), because for each $x\in X$ and each $y\in Y$,
    \begin{align*}
        &0\not=\mu_X(\{x\})=(\varphi_X)_\#\mu_Z(\{x\})=\mu_Z\left(\{z\mid \, \varphi_X(z)=x\}\right),\\
         &0\not=\mu_Y(\{y\})=(\varphi_Y)_\#\mu_Z(\{y\})=\mu_Z\left(\{z\mid \, \varphi_Y(z)=y\}\right),
    \end{align*}
    that is, for each $x\in X$ there exists at least one point $z\in Z$ such that $\varphi_X(z)=x$; similarly, for each $y\in Y$ there exists at least one point $z\in Z$ such that $\varphi_Y(z)=y$. 

    Let us define the function $\varphi:X\to Y$ as follows: for  $x\in X$, set $\varphi(x):=\varphi_Y(z_x)$, where $z_x$ is some point in $Z$ such that $\varphi_X(z_x)=x$. 
    Let us show the following properties of $\varphi$:
    \begin{enumerate}[leftmargin=*]
        \item It is a well-defined function: It does not depend on the representative $z_x$, that is, if $z_x, z_x'\in Z$ are such that $\varphi_X(z_x)=x=\varphi_X(z_x')$, then 
    \begin{equation*}
        d_Y(\varphi_Y(z_x),\varphi_Y(z_x'))=d_X(\varphi_X(z_x),\varphi_X(z_x'))=d_X(x,x)=0,
    \end{equation*}
    thus $\varphi_Y(z_x)=\varphi_Y(z_x')$.
    Notice that we have applied the identity of indiscernibles for distances. 
    \item It is an isometry: For every $x,x'\in X$ we have, 
    \begin{equation*}
        d_Y(\varphi(x),\varphi(x'))=d_Y(\varphi_Y(z_x),\varphi_Y(z_{x'}))=d_X(\varphi_X(z_x),\varphi_X(z_{x'}))=d_X(x,x').
    \end{equation*}
    \item It is a bijection: In analogy to the definition of $\varphi$, we can define a new map $\phi:Y\to X$ by $\phi(y):=\varphi_X(z_y)$ for any $z_y\in Z$ such that $\varphi_Y(z_y)=y$. It is a well-defined function by the same argument as in the first item. Moreover, $\phi=\varphi^{-1}$, since, by definition 
    \begin{align*}
        &\varphi(\phi(y))=\varphi(\varphi_X(z_y))=\varphi_Y(z_y)=y, \qquad \forall y\in Y,\\
        &\phi(\varphi(x))=\phi(\varphi_Y(z_x))=\varphi_X(z_x)=x, \qquad \forall x\in X.
    \end{align*}
    \item $\varphi_\#\mu_X=\mu_Y$: For every $y\in Y$,
    \begin{align*}
        \varphi_\#\mu_X(\{y\})&=\mu_X(\{x\mid \, \varphi(x)=y\})=(\varphi_X)_\#\mu_Z(\{x=\phi(y)\})\\
        &=\mu_Z(\{z\mid \, \varphi_X(z)=\phi(y) \})=\mu_Z(\{z\mid \, \varphi(\varphi_X(z))=y \})\\
        &=\mu_Z(\{z\mid \, \varphi_Y(z)=y\})=(\varphi_Y)_\#\mu_Z(\{y\})=\mu_Y(\{y\}).
    \end{align*}
    \end{enumerate}

\end{proof}

\begin{example}[Weak Isomorphism]\label{example: weakiso}
    Figure \ref{fig: weakiso_2} is an illustration of a weak isomorphism between two networks $\mathbb X$ and $\mathbb Y$.  
    The network $\mathbb X$ consists of a measure space $(X,\mu_X)$ with two points having masses $0.6$ and $0.4$, and $\omega_X$ represented by the $2\times 2$ matrix 
    \begin{equation*}
        \mathbf{X}=\begin{pmatrix}
        3 & 2\\
        2 & 0
    \end{pmatrix}.
    \end{equation*}
    The network $\mathbb Y$ is given by the measure space $(Y,\mu_Y)$ consisting of three points with mass $0.3$, $0.3$, and $0.4$, and $\omega_Y$ represented by the $3\times 3$ matrix 
    \begin{equation*}
        \mathbf{Y}=\begin{pmatrix}
        3 & 3 & 2\\
        3 & 3 & 2\\
        2 & 2 & 0\\
    \end{pmatrix}.
    \end{equation*} 
    Similarly as in Figure \ref{fig: weakiso}, the pivoting space $(Z,\mu_Z)$ in Definition \ref{def: weak iso} can be taken as $(Y,\mu_Y)$, with $\varphi_X=\varphi$ (red assignment) and $\varphi_Y$ the identity map, getting $\mathbb X\sim^w\mathbb Y$.

As an informal interpretation, under weak isomorphism two nodes
$y, y'\in Y$ are  `\emph{the same}' if they have the same `internal connectivity', i.e., $\omega_Y(y,y)=\omega_Y(y,y')=\omega_X(y',y)=\omega_Y(y',y')$, and the same `external connectivity', i.e., $\omega_Y(y,a)=\omega_Y(y',a)$ and $\omega_Y(a,y)=\omega_Y(a,y')$ for all $a\in Y$ \cite{chowdhury2019gromov}.
In our example, the two nodes in the space $Y$ with mass $0.3$ are `\emph{the same}'.

\begin{figure}[ht!]
    \centering
    \includegraphics[width=0.4\linewidth]{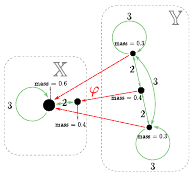}
    \caption{\small{Illustration of two weakly isomorphic spaces $\mathbb X\sim^w\mathbb Y$.}}
    \label{fig: weakiso_2}
\end{figure}

\end{example}

\begin{remark}[Log Map]
By using the blow-up technique (i.e., by using Algorithm \ref{alg: blow up} with an optimal coupling for $GW(\mathbb X, \mathbb Y)$), the \emph{logarithmic map} $Log_{[\mathbb{Y}]}:\mathcal{GM}\longrightarrow \mathrm{Tan}_{[\mathbb{Y}]}$, that is, the local inverse of the exponential map \eqref{eq: exp}, can be written as: $ Log_{[\mathbb Y]}([\mathbb X]):=[\mathbf X_b-\mathbf Y_b]$. We refer the reader to \cite[Sec. 3.4]{chowdhury2020gromov}. 
\end{remark}

\begin{remark}[Geodesics]\label{rem: geod}
 By using the blow-up technique, the curve  (for $t\in [0,1]$)
 $$\gamma(t):=[(X_b\times Y_b, (1-t)\omega_{X_b}+t\omega_{Y_b}, \pi_b)] \text{ or, equivalently, } \gamma(t):= [((1-t)\mathbf{X}_b+t\mathbf Y_{b}, \q_b)],$$
 is a GW \emph{geodesic} between $[\mathbb X]$ and $[\mathbb Y]$,
    in the sense that 
    $GW(\gamma(t), \gamma(s))=|t-s|GW(\mathbb X, \mathbb Y )$ for all $0\leq s,t\leq 1$.
    In particular,  ${\omega_{X_b}}-{\omega_{Y_b}}$ (represented by the matrix $\mathbf X_b-\mathbf Y_b$) can be viewed as its \emph{velocity}. We refer the reader to \cite{sturm2023space, chowdhury2020gromov}.
    As a consequence, the GW barycenter between two references $[\mathbb X]$, $[\mathbb Y]$ with uniform weights $\lambda=(1/2,1/2)$ is given by the network class $\gamma(1/2)= [(\frac{1}{2}\mathbf{X}_b+\frac{1}{2}\mathbf Y_{b}, \q_b)]$. Indeed, by using the triangle inequality, for every network class $[\mathbb Z]$ we have: 
    \begin{align*}
        \frac{1}{2}GW^2(\mathbb X, \gamma(1/2))+\frac{1}{2}GW^2(\mathbb Y, \gamma(1/2))&=\frac{1}{2}GW^2(\gamma(0), \gamma(1/2))+\frac{1}{2}GW^2(\gamma(1), \gamma(1/2))\\
        &=\frac{1}{2}\frac{1}{4}GW^2(\mathbb X, \mathbb Y )+\frac{1}{2}\frac{1}{4}GW^2(\mathbb X, \mathbb Y )\\
        &=\frac{1}{4}GW^2(\mathbb X, \mathbb Y )\\
        &\leq \frac{1}{4}\left(GW(\mathbb X, \mathbb Z )+GW(\mathbb X, \mathbb Z )\right)^2\\
        &=\left(\frac{1}{2}GW(\mathbb X, \mathbb Z )+\frac{1}{2}GW(\mathbb X, \mathbb Z )\right)^2\\
        &\leq \frac{1}{2}GW^2(\mathbb X, \mathbb Z )+\frac{1}{2}GW^2(\mathbb X, \mathbb Z ) .
    \end{align*}
    In general, $\gamma(t)$ is a GW barycenter between $[\mathbb X]$ and $[\mathbb Y]$ with weights $\lambda=(1-t,t)$ because for every network class $[\mathbb Z]$ we have:
    \begin{align*}
    (1 - t)\, GW^2(\mathbb{X}, \gamma(t)) + t\, GW^2(\mathbb{Y}, \gamma(t)) 
    &= (1 - t)\, t^2\, GW^2(\mathbb{X}, \mathbb{Y}) + t\, (1 - t)^2\, GW^2(\mathbb{X}, \mathbb{Y}) \\
    &= t(1 - t)\, GW^2(\mathbb{X}, \mathbb{Y}) \\
    &\leq t(1 - t)\, \left(GW(\mathbb{X}, \mathbb{Z}) +  GW(\mathbb{Y}, \mathbb{Z})\right)^2\\
    &\leq (1 - t)\, GW^2(\mathbb{X}, \mathbb{Z}) + t\, GW^2(\mathbb{Y}, \mathbb{Z}),
\end{align*}
where in the last step we used the inequality $t(1-t)(a+b)^2\leq (1-t)a^2+tb^2$ (which arises from expanding squares or completing the square $((1-t)a-tb)^2\geq 0$).
However, this procedure cannot be adapted when considering more than 2 templates. 
\end{remark}    

\begin{remark}[First-Order Optimality in GW Space with the Formal Riemannian-Like Structure Defined in \cite{chowdhury2020gromov} and Used in This Work]\label{rem: first order calc}
    Let us show that a  minimizer $[\mathbb Y^*]$ of a differentiable functional in GW space, $G:\mathcal{GW}\to \R$,  must satisfy the \emph{first-order optimality condition}, that is, $\nabla G([\mathbb Y^*])=0$.
    First, by assumption we have that $G([\mathbb Y^*])\leq G([\mathbb Y])$ for all $[\mathbb Y]\in \mathcal{GW}$. In particular, $G([\mathbb Y^*])\leq G(Exp_{[\mathbb Y^*]}(tv))$ for all $[v]\in \mathrm{Tan}_{[\mathbb Y^*]}$. Thus,
    \begin{equation*}
        \langle [v], \nabla G ([\mathbb Y^*])\rangle_{\mathrm{Tan}_{[\mathbb Y^*]}}=D_vG([\mathbb Y])=\lim_{t\searrow 0}\frac{G(Exp_{[\mathbb Y^*]}(tv))-G([\mathbb Y^*])}{t}\geq 0, \quad \forall [v]\in \mathrm{Tan}_{[\mathbb Y^*]}.
    \end{equation*}
    In particular, for the direction $[v]=-\nabla G ([\mathbb Y^*])$ (which exists because tangent vectors are modeled as elements of $L^2$-spaces) we have 
    \begin{equation*}
     0 \leq \langle -\nabla G ([\mathbb Y^*]), \nabla G ([\mathbb Y^*])\rangle_{\mathrm{Tan}_{[\mathbb Y^*]}}= -\|\nabla G ([\mathbb Y^*])\|_{\mathrm{Tan}_{[\mathbb Y^*]}}^2\leq 0,
    \end{equation*}
    which is only possible if $\nabla G ([\mathbb Y^*])=0$.

Thus, the authors in \cite{chowdhury2020gromov} develop a \emph{first-order calculus} on GW space--comprising directional derivatives of functionals, gradients as linear functionals (or vectors), first-order optimality conditions for minimizers, and linear approximations of functions around a point--by equipping it with a formal tangent space at each point $[\mathbb Y]$, based on a Riemannian-like framework. This ends the remark. 

\end{remark}

\bigskip

For completeness, we revisit the proof of Proposition \ref{prop: grad via blow up}, originally presented in \cite[Prop. 8]{chowdhury2020gromov}. Before doing so, we state the following lemma and a subsequent observation.

\begin{lemma}\label{lem: aux lem}
    Given a finite network $\mathbb Y=(Y,\omega_Y,\mu_Y)$ and a measure space $(Z,\mu_Z)$ with an associated function $\varphi:Z\to Y$ such that $\varphi_\#\mu_Z = \mu_Y$. Then 
    \begin{enumerate}
        \item $(Z,\varphi^*\omega_Y,\mu_Z) \in [\mathbb Y]$.
        \item For any $v \in L^2(Y\times Y, \mu_Y \otimes \mu_Y)$, $v \simeq \varphi^*v \in L^2(Z\times Z, \mu_Z \otimes \mu_Z)$ (i.e., $[v]=[\varphi^*v] \in \mathrm{Tan}_{[\mathbb Y]}$).  
    \end{enumerate}
\end{lemma}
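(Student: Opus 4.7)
My plan is to prove both parts by exhibiting explicit pivoting spaces and couplings, essentially using the graph of $\varphi$.

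For part (1), the goal is to show $(Z,\varphi^*\omega_Y,\mu_Z) \sim^w (Y,\omega_Y,\mu_Y)$ in the sense of Definition \ref{def: weak iso}. I would take the space $(Z,\mu_Z)$ itself as the common resolution, with maps $\psi_Z := \mathrm{id}_Z : Z \to Z$ and $\psi_Y := \varphi : Z \to Y$. Then $(\psi_Z)_\#\mu_Z = \mu_Z$ is immediate, $(\psi_Y)_\#\mu_Z = \varphi_\#\mu_Z = \mu_Y$ holds by hypothesis, and the $\omega$-compatibility condition reduces to $\|\psi_Z^*(\varphi^*\omega_Y) - \psi_Y^*\omega_Y\|_\infty = \|\varphi^*\omega_Y - \varphi^*\omega_Y\|_\infty = 0$, which is trivial.

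For part (2), having established that both $(Y,\omega_Y,\mu_Y)$ and $(Z,\varphi^*\omega_Y,\mu_Z)$ belong to $[\mathbb{Y}]$, I need to exhibit a self-coupling $\pi \in \Pi(\mu_Y,\mu_Z)$ witnessing $v \simeq \varphi^*v$. The natural candidate is the pushforward along the graph of $\varphi$:
\begin{equation*}
    \pi := (\varphi,\mathrm{id}_Z)_\#\mu_Z,
\end{equation*}
which is supported on $\{(\varphi(z),z) : z \in Z\} \subset Y \times Z$. Its marginals are $\varphi_\#\mu_Z = \mu_Y$ and $(\mathrm{id}_Z)_\#\mu_Z = \mu_Z$, so $\pi \in \Pi(\mu_Y,\mu_Z)$. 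For $\pi \otimes \pi$-almost every $((y_1,z_1),(y_2,z_2))$, we have $y_i = \varphi(z_i)$, hence
\begin{equation*}
    \omega_Y(y_1,y_2) = \omega_Y(\varphi(z_1),\varphi(z_2)) = (\varphi^*\omega_Y)(z_1,z_2),
\end{equation*}
which shows that $\pi$ is a self-coupling between the two networks, and identically
\begin{equation*}
    v(y_1,y_2) = v(\varphi(z_1),\varphi(z_2)) = (\varphi^*v)(z_1,z_2),
\end{equation*}
which is the second required equality. Thus $v \simeq \varphi^*v$ and so $[v] = [\varphi^*v]$ in $\mathrm{Tan}_{[\mathbb{Y}]}$.

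There is no real obstacle here: the lemma is essentially a bookkeeping statement saying that one can always pull a representative of a tangent vector back through any measure-preserving factor map. The only subtlety is recognizing that the graph of $\varphi$ provides simultaneously both the common resolution in part (1) and the self-coupling in part (2); once this observation is made, both statements reduce to the identities $\psi^*\omega = \omega \circ \psi$ and $\psi^*v = v \circ \psi$ evaluated on the support of the graph measure.
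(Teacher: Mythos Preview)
Your proof is correct and follows essentially the same approach as the paper: both arguments rest on the graph of $\varphi$, realized either as the common resolution $(Z,\mathrm{id}_Z,\varphi)$ or equivalently as the induced plan $\pi=(\varphi,\mathrm{id}_Z)_\#\mu_Z$. The paper's proof is terser---it simply notes that this plan achieves $GW((Z,\varphi^*\omega_Y,\mu_Z),\mathbb Y)=0$ and leaves part~(2) implicit---whereas you spell out both parts explicitly, but the underlying idea is identical.
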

\begin{proof}
It follows from the fact that the plan in $\Pi(\mu_Z,\mu_Y)$ induced by the map $\varphi$ (i.e., $\pi(\{z,y\})=\mu_Z(\{z\})\delta_{y=\varphi(z)}$) achieves    
    $GW( (Z,\varphi^*\omega_Y,\mu_Z), \mathbb Y)=0$.
\end{proof}

\begin{remark}\label{rem: aux}
    Let  $\mathbb Y=(Y,\omega_Y,\mu_Y)=(\mathbf Y,\q)$ and $\mathbb Y'=(Y',\omega_{Y'},\mu_{Y'})=(\mathbf Y',\q')$ be finite networks such that $\mathbb Y\sim^w \mathbb Y'$ (i.e., both belong to the same class $[\mathbb Y]$).
    Let $\pi\in \Pi(\q,\q')$ be a GW OT plan between $\mathbb Y$ and $\mathbb Y'$, i.e., it achieves  $GW((\mathbf Y,\q),(\mathbf Y',\q') )=0$.
    Apply the blow-up procedure between $\mathbb Y$ and $\mathbb Y'$ with respect to $\pi$ (see Definition \ref{def: blowup} and the follow up properties). Therefore, after relabeling the nodes if necessary, one obtains a new set of nodes $Y_b=Z=Y_b'$, with cardinality equal to that of the support of $\pi$, and new distribution of mass $\mu_{Y_b}=\mu_Z=\mu_{Y_b'}$ on such nodes. Moreover, there exist functions $\varphi_Y:Z\to Y$, $\varphi_{Y'}:Z\to Y'$ (defined implicitly in parts  2 and 3 of Definition \ref{def: blowup}) such that $(\varphi_Y)_\#\mu_Z=\mu_Y$ $(\varphi_{Y'})_\#\mu_Z=\mu_{Y'}$ and  $(\varphi_Y)^*\omega_Y=(\varphi_{Y'})^*\omega_{Y'}$. 

    Now, consider $[v],[v']\in \mathrm{Tan}_{[\mathbb Y]}$ with representatives $v\in L^2(Y\times Y,\mu_Y\otimes \mu_Y)$ and $v'\in L^2(Y'\times Y',\mu_{Y'}\otimes \mu_{Y'})$. 
    As a consequence of Lemma \ref{lem: aux lem}, 
    we can obtain new representatives  $(\varphi_Y)^*v$ and $(\varphi_{Y'})^*v'$ defined over the same underlying space $Z\times Z$, i.e.,  $(\varphi_Y)^*v,(\varphi_{Y'})^*v'\in L^2(Z\times Z, \mu_Z\otimes \mu_Z)$, satisfying   $(\varphi_Y)^*v\simeq v$ and $(\varphi_{Y'})^*v'\simeq v$.  
\end{remark}

\begin{proof}[Proof of Proposition \ref{prop: grad via blow up}]\,

\textbf{Case $S=1$:} For simplicity, we start by fixing only one finite template $[\mathbb X]$, and considering the functional $[\mathbb Y]\longmapsto \frac{1}{2}GW^2(\mathbb X,\mathbb Y)$ defined over classes of finite networks, which we denote by $G$. Given a finite network $[\mathbb Y]$, we want to compute $\nabla G([\mathbb Y])$ according to Definition \ref{def: grad}.

Let $(\mathbf X,\p)$ and $(\mathbf Y,\q)$ be representatives of the weak classes of equivalence of $[\mathbb X]$ and $[\mathbb Y]$
after blow-up (for simplicity in the notation we avoid the subindex `$b$' from Definition \ref{def: blowup}). In particular, both $\mathbf X$ and $\mathbf Y$ are matrices of the same size, $M\times M$. Moreover, we can assume that a GW OT plan in $\Pi(\p,\q)$ for $GW(\mathbb X, \mathbb Y)=GW((\mathbf X,\p),(\mathbf Y, \q))$
is induced by the identity map, after permuting and relabeling the nodes if necessary, and so we can also assume  $\p=\q$. Thus,
\begin{equation}\label{eq: gw aux weak}
    GW^2(\mathbb X,\mathbb Y)
    =\sum_{i,j\in [M]}|\mathbf X_{ij}-\mathbf Y_{ij}|^2 \q_i\q_j,
\end{equation}
that is, a GW OT plan is a diagonal matrix whose diagonal is given by $\q$, and we denote it as $\mathrm{diag}(\q)$.

Consider the $M\times M$ matrix $\mathbf X-\mathbf Y$.   The candidate for $\nabla G([\mathbb Y])$ is $[\mathbf X-\mathbf Y]$.
Let $[v]\in\mathrm{Tan}_{[\mathbb Y]}$ be an arbitrary tangent vector at $[\mathbb Y]$. We want to show that 
\begin{equation*}
D_vG([\mathbb Y])=\langle [v], [\mathbf X-\mathbf Y]\rangle_{\mathrm{Tan}_{[\mathbb Y]}}.
\end{equation*}
From Remark \ref{rem: aux} applied to $[v]$ and $[\mathbf X-\mathbf Y]$ in $\mathrm{Tan}_{[\mathbb Y]}$, we can consider representatives that are matrices of the same size. Therefore, 
without loss of generality, and by abuse of notation, we can assume that $v$ and $\mathbf X-\mathbf Y$ are representatives of $[v]$ and $[\mathbf X-\mathbf Y]$, respectively, of the same size $M\times M$ (after a new blow-up, possible enlargement, realignment, and relabeling). Thus,
\begin{equation}\label{eq: tan auxxx}
  \langle [\mathbf X - \mathbf Y],[v] \rangle_{\mathrm{Tan}_{[\mathbb Y]}}=\sum_{i,j\in [M]}(\mathbf X_{ij}-\mathbf Y_{ij})v_{ij}\q_i\q_j,  
\end{equation}
where the inner product is given by the weighted trace inner product with weight  $\q\otimes \q$.

For $t\geq 0$, consider $\pi^t$  a GW optimal  $M\times M$ coupling in $\Pi(\q,\q)$ between $(\mathbf X,\q)$ (representative of $[\mathbb X]$) and $(\mathbf Y+tv,\q)$ (representative of $Exp_{[\mathbb Y]}(tv)$) such that $\lim_{t\to 0}\pi^t=\mathrm{diag}(\q)$ (notice that $Exp_{[\mathbb Y]}(0)=[\mathbb Y]$). 
On the one hand, we have:
\begin{align}
    &GW^2(\mathbb X,Exp_{[\mathbb Y]}(tv))
        =\sum_{i,j\in [M]} \sum_{k,l\in[M]} |\mathbf X_{ik}-\mathbf Y_{jl}-tv_{jl}|^2 \pi^t_{ij}\pi^t_{kl} \notag\\ 
        &=\sum_{i,j\in [M]} \sum_{k,l\in[M]} |\mathbf X_{ik}-\mathbf Y_{jl}|^2 \pi^t_{ij}\pi^t_{kl}+ t^2\sum_{i,j\in [M]} \sum_{k,l\in[M]}|v_{jl}|^2 \pi^t_{ij}\pi^t_{kl}\notag\\
        &\qquad +2t\sum_{i,j\in [M]} \sum_{k,l\in[M]} (\mathbf X_{ik}-\mathbf Y_{jl})v_{jl}\pi_{ij}^t\pi^t_{kl}\notag\\
        &=\sum_{i,j\in [M]} \sum_{k,l\in[M]} |\mathbf X_{ik}-\mathbf Y_{jl}|^2 \pi^t_{ij}\pi^t_{kl}+ t^2\sum_{i,j\in [M]} |v_{ij}|^2 \q_i\q_j\notag\\
        &\qquad +2t\sum_{i,j\in [M]} \sum_{k,l\in[M]} (\mathbf X_{ik}-\mathbf Y_{jl})v_{jl}\pi_{ij}^t\pi^t_{kl}\notag\\
        &\geq GW^2(\mathbb X,\mathbb Y)+t^2\sum_{i,j\in [M]}|v_{ij}|^2 \q_i\q_j+2t\sum_{i,j\in [M]} \sum_{k,l\in[M]} (\mathbf X_{ik}-\mathbf Y_{jl})v_{jl}\pi_{ij}^t\pi^t_{kl}.\label{eq: bound 1}
\end{align}
On the other hand, using \eqref{eq: gw aux weak} and since $\mathrm{diag}(\q)$ is an admissible coupling but not necessarily optimal for $GW^2(\mathbb X,Exp_{[\mathbb Y]}(tv))$, we have:
\begin{align}
    &GW^2(\mathbb X,Exp_{[\mathbb Y]}(tv))
        =\sum_{i,j\in [M]} \sum_{k,l\in[M]} |\mathbf X_{ik}-\mathbf Y_{jl}-tv_{jl}|^2 \pi^t_{ij}\pi^t_{kl}\notag\\ 
        &\leq
        \sum_{i,j\in [M]}|\mathbf X_{ij}-\mathbf Y_{ij}-tv_{ij}|^2 \q_i\q_j \notag\\
        &=\sum_{i,j\in [M]}|\mathbf X_{ij}-\mathbf Y_{ij}|^2 \q_i\q_j+ t^2\sum_{i,j\in [M]}|v_{ij}|^2 \q_i\q_j+2t\sum_{i,j\in [M]}(\mathbf X_{ij}-\mathbf Y_{ij})v_{ij}\q_i\q_j\notag\\
        &= GW^2(\mathbb X,\mathbb Y)+t^2\sum_{i,j\in [M]}|v_{ij}|^2 \q_i\q_j+2t\sum_{i,j\in [M]}(\mathbf X_{ij}-\mathbf Y_{ij})v_{ij}\q_i\q_j. \label{eq: bound 2}
\end{align}
In what follows we use \eqref{eq: bound 1} and \eqref{eq: bound 2}  to provide upper and lower bounds for the directional derivative
\begin{equation*}
    2\, D_vG([\mathbb Y])=\lim_{t\to 0}\frac{GW^2(\mathbb X,Exp_{[\mathbb Y]}(tv))-GW^2(\mathbb X,\mathbb Y)}{t}.
\end{equation*}
\begin{itemize}
    \item \emph{Upper bound:} 
\begin{align*}
    &\frac{GW^2(\mathbb X,Exp_{[\mathbb Y]}(tv))-GW^2(\mathbb X,\mathbb Y)}{t}\leq \sum_{i,j\in [M]}\left(t|v_{ij}|^2 +2(\mathbf X_{ij}-\mathbf Y_{ij})v_{ij}\right)\q_i\q_j.
\end{align*}
Thus,
\begin{align}\label{eq: prop 8 b}
    \lim_{t\to 0}&\frac{GW^2(\mathbb X,Exp_{[\mathbb Y]}(tv))-GW^2(\mathbb X,\mathbb Y)}{t}\leq 2\sum_{i,j\in [M]}(\mathbf X_{ij}-\mathbf Y_{ij})v_{ij}\q_i\q_j.
\end{align}
    \item \emph{Lower bound:}    
\begin{align*}
    &\frac{GW^2(\mathbb X,Exp_{[\mathbb Y]}(tv))-GW^2(\mathbb X,\mathbb Y)}{t}\\
    &\geq t\sum_{i,j\in [M]}|v_{ij}|^2 \q_i\q_j+2\sum_{i,j\in [M]} \sum_{k,l\in[M]} (\mathbf X_{ik}-\mathbf Y_{jl})v_{jl}\pi_{ij}^t\pi^t_{kl}.
\end{align*}
Thus,
\begin{align}\label{eq: prop 8 a}
    \lim_{t\to 0}&\frac{GW^2(\mathbb X,Exp_{[\mathbb Y]}(tv))-GW^2(\mathbb X,\mathbb Y)}{t}\geq 2\sum_{i,j\in [M]}(\mathbf X_{ij}-\mathbf Y_{ij})v_{ij}\q_i\q_j.
\end{align}    
\end{itemize}
As a consequence of \eqref{eq: tan auxxx}, \eqref{eq: prop 8 b} and \eqref{eq: prop 8 a}, for a fixed $[\mathbb X]$, the directional derivative of the functional $G([\mathbb Y])= \frac{1}{2}GW^2(\mathbb X,\mathbb Y)$ in the direction of $[v]\in\mathrm{Tan}_{[\mathbb Y]}$ at $[\mathbb Y]$ is given by
\begin{align*}
   D_vG([\mathbb Y])&=\frac{1}{2}\lim_{t\to 0}\frac{GW^2(\mathbb X,Exp_{[\mathbb Y]}(tv))-GW^2(\mathbb X,\mathbb Y)}{t}\\
   &=\sum_{i,j\in [M]}(\mathbf X_{ij}-\mathbf Y_{ij})v_{ij}\q_i\q_j\\
   &=\langle [\mathbf X - \mathbf Y],[v] \rangle_{\mathrm{Tan}_{[\mathbb Y]}}.  
\end{align*}
Therefore, the gradient of the functional $[\mathbb Y]\longmapsto \frac{1}{2}GW^2(\mathbb X,\mathbb Y)$ can be identified with the matrix $\mathbf X-\mathbf Y$, obtained after blow-up.

Moreover, notice that for the case $S=1$, we have:
\begin{equation*}
    GW^2(\mathbb X,\mathbb Y)=\mathrm{tr}_\q\left((\mathbf X-\mathbf Y)^T(\mathbf X-\mathbf Y)\right)= \|[\mathbf X-\mathbf Y]\|^2_{\mathrm{Tan}_{[\mathbb Y]}}.
\end{equation*}

\textbf{General Case $S\geq 1$:}
Let us fix a collection $\{[\mathbb{X}^s]\}_{s=1}^S$ of classes of finite networks, $\lambda\in \Delta_{S-1}$, and the functional $G:\mathcal{GW}\to \R_{\geq 0}$ given in \eqref{eq: barycenter functional}.
Given a finite network $[\mathbb Y]$, we want to compute $\nabla G([\mathbb Y])$ according to Definition \ref{def: grad}. 

Following the outline provided by the authors in \cite{chowdhury2020gromov}, suppose that each $\mathbb X^s$ has been aligned to $\mathbb Y$, that is, suppose we have applied Algorithm \ref{alg: blow up}  so that each network of
$\{\mathbb Y=(\mathbf Y,\q),\mathbb X^1=(\mathbf X^1,\q),\dots, \mathbb X^S=(\mathbf X^S,\q)\}$ has $M$ nodes with mass distribution $\q$, and the identity map from $[M]$ to $[M]$ is optimal for each $GW(\mathbb X^s, \mathbb Y)$, in the sense that: 
\begin{equation*}
    GW^2(\mathbb X^s,\mathbb Y)
    =\sum_{i,j\in [M]}|\mathbf X_{ij}^s-\mathbf Y_{ij}|^2 \q_i\q_j, \qquad \forall s\in [S]. 
\end{equation*}
Consider the $M\times M$ matrix $\sum_{s=1}^S\lambda_s \mathbf X^s-\mathbf Y$.   The candidate for $\nabla G([\mathbb Y])$ is $[\sum_{s=1}^S\lambda_s \mathbf X^s-\mathbf Y]\in \mathrm{Tan}_{[\mathbb Y]}$. 
Let $[v]\in\mathrm{Tan}_{[\mathbb Y]}$ be an arbitrary tangent vector at $[\mathbb Y]$. As before, we can consider representatives of $[\sum_{s=1}^S\lambda_s \mathbf X^s-\mathbf Y]$ and $[v]$ that are matrices of the same size. By abuse of notation, we denote such representatives as $\sum_{s=1}^S\lambda_s \mathbf X^s-\mathbf Y$ and $v$,  assuming they belong to the space of $M\times M$ matrices endowed with the weighted Frobenius norm, with weight $\q\otimes \q$:
\begin{align}
\langle [v], [\sum_{s\in [S]}\lambda_s\mathbf X^s-\mathbf Y]\rangle_{\mathrm{Tan}_{[\mathbb Y]}}&=\sum_{i,j\in [M]}\left(\sum_{s\in [S]}\lambda_s\mathbf X_{ij}^s-\mathbf Y_{ij}\right)v_{ij} \,  \q_i\q_j\notag \\
&= \sum_{s\in [S]}\lambda_s\sum_{i,j\in [M]}\left(\mathbf X_{ij}^s-\mathbf Y_{ij}\right)v_{ij} \,  \q_i\q_j, \label{eq: S>1 aux}
\end{align}
where the second equality holds from the fact that $\sum_{s\in S}\lambda_s=1$.
We want to show that $D_vG_\lambda([\mathbb Y])=\langle [v], [\sum_{s\in [S]}\lambda_s\mathbf X^s-\mathbf Y]\rangle_{\mathrm{Tan}_{[\mathbb Y]}}$. 

We repeat the same calculations as in the case $S=1$, now for each $s\in [S]$, by considering $\pi^{s,t}$ a GW optimal  $M\times M$ coupling in $\Pi(\q,\q)$ between $(\mathbf X^s,\q)$ and $(\mathbf Y+tv,\q)$ (where the last network is a representative of $Exp_{[\mathbb Y]}(tv)$), for $t\geq 0$,  such that $\lim_{t\to 0}\pi^{s,t}=\mathrm{diag}(\q)$, obtaining
\begin{align*}
   D_vG_\lambda([\mathbb Y])&=\frac{1}{2}\sum_{s\in [S]}\lambda_s\lim_{t\to 0}\frac{GW^2(\mathbb X^s,Exp_{[\mathbb Y]}(tv))-GW^2(\mathbb X^s,\mathbb Y)}{t}\\
   &=\sum_{s\in [S]}\lambda_s\sum_{i,j\in [M]}(\mathbf X_{ij}^s-\mathbf Y_{ij})v_{ij}\q_i\q_j\\
   &=\langle [\sum_{s\in [S]}\lambda_s\mathbf X^s - \mathbf Y],[v] \rangle_{\mathrm{Tan}_{[\mathbb Y]}},  
\end{align*}
where the last equality holds from \eqref{eq: S>1 aux}.
Therefore, $\nabla G_\lambda([\mathbb Y])$ can be identified with the matrix $\sum_{s\in [S]}\lambda_s\mathbf X^s-\mathbf Y$, obtained after applying the blow-up algorithm \ref{alg: blow up}.
\end{proof}

We now proceed with a remark on blow-ups.

\begin{remark}\label{rem: remark trivial}
    Let $[\mathbb Y]$ be any finite network class, and let us apply Algorithm \ref{alg: blow up} with respect to the given templates $\{[\mathbb X^s]\}_{s=1}^S$ to obtain the corresponding blow-ups. On the one hand, we have that the blow-up 3-tuple $(Y_b,\omega_{Y_b},\mu_{Y_b})$ is a representative of the class $\mathbb{[Y]}$. On the other hand, as a consequence of Proposition \ref{prop: grad via blow up}, $$\nabla G_\lambda([\mathbb Y])= \left[\sum_{s=1}^S\lambda_s\omega_{X_b^s}-{\omega_{Y_b}}\right]\in \mathrm{Tan}_{[\mathbb Y]},$$ 
    and so
\begin{equation*}
Exp_{[\mathbb Y]}([\nabla G_\lambda([\mathbb Y])])=[(Y_b,\cancel{\omega_{Y_b}}+\sum_{s=1}^S\lambda_s\omega_{X_b^s}-\cancel{\omega_{Y_b}},\mu_{Y_b})]= [(Y_b,\sum_{s=1}^S\lambda_s\omega_{X_b^s},\mu_{Y_b})].
\end{equation*}
Now, assume that $\nabla G_\lambda([\mathbb Y])=0$ for some $\lambda\in \Delta_{S-1}$. Hence, 
\begin{align*}
   [(Y_b,\omega_{Y_b},\mu_{Y_b})] =[\mathbb Y]=Exp_{[\mathbb Y]}(0)=Exp_{[\mathbb Y]}([\nabla G_\lambda([\mathbb Y])])=[(Y_b,\sum_{s=1}^S\lambda_s\omega_{X_b^s},\mu_{Y_b})]
\end{align*}
implying that
\begin{equation}\label{eq: shape of bary grad method}
    \omega_{Y_b}=\sum_{s=1}^S\lambda_s\omega_{X_b^s} \quad \text{ or, equivalently, } \quad \mathbf Y_b =\sum_{s=1}^S\lambda_s \mathbf X^s_b.
\end{equation}
See also \cite[Rmk. 9]{chowdhury2020gromov}. It is important to remark that the expression $\sum_{s=1}^S\lambda_s\omega_{X_b^s}$ depends on the given network $\mathbb{Y}$.
That is, given another network $\mathbb Z$, one has to perform Algorithm \ref{alg: blow up} to $\mathbb Z$ and the templates $\{\mathbb X^s\}$ getting new representatives of all the networks involved and then the new corresponding  expression $\sum_{s=1}^S\lambda_s\omega_{X_b}$ will be different to the above one \eqref{eq: shape of bary grad method}. Thus, it would be better to denote the new representatives of the templates after  blow-up with respect to $\mathbb Y$ as 
$$\mathbb X^s\sim^w(\mathbf X_{b,Y}^s, \, \q_{b,Y}) \qquad \forall s\in [S], $$
and  after  blow-up with respect to $\mathbb Z$ as 
$$\mathbb X^s\sim^w(\mathbf X_{b,Z}^s, \, \q_{b,Z}) \qquad \forall s\in [S].$$
By transitivity, it holds that 
$$(\mathbf X_{b,Y}^s, \, \q_{b,Y})\sim^w(\mathbf X_{b,Z}^s, \, \q_{b,Z}) \qquad \forall s\in [S]$$
However, in general
$$\left(\sum_{\lambda_s}\mathbf X_{b,Y}^s, \,  \q_{b,Y}\right)\not\sim^w\left(\sum_{\lambda_s}\mathbf X_{b,Z}^s, \,  \q_{b,Z}\right).$$
In particular, there may be more than one critical point of $G_\lambda$. The discussion above was meant to avoid confusion: \emph{the expression $\sum_{s=1}^S\lambda_s \mathbf X^s_b$ does not depend solely on the templates.}
\end{remark}

We conclude this section by providing a proof of the second part of Lemma \ref{thm: equiv}, which does not rely on the first-order optimality condition in GW space derived from the first-order calculus introduced in \cite{chowdhury2020gromov} and reviewed throughout this work.

\begin{proof}[Proof of Lemma \ref{thm: equiv}]
    Our goal is to provide a direct proof for the implication $$[\mathbb Y^*]\in \mathrm{Bary}(\{[\mathbb X^s]\}_{s\in [S]})\Longrightarrow \mathbb Y^*\sim^w\left(\sum_{s=1}^S\lambda_s\mathbf X^s_b,\q_b\right).$$
    The spaces $\mathbb X_b^s=(\mathbf X^s_b, \q_b)$, for $s\in [S]$, and $\mathbb Y^*_b=(\mathbf Y_b^*, \q_b)$ are obtained by applying Algorithm \ref{alg: blow up}. Let $M_b$ be the cardinality of these new spaces, that is, $\mathbf X_b^s,\mathbf Y_b^*\in \R^{M_b\times M_b}$, for all $s\in [S]$, and $\q_b\in \mathcal{P}_{M_b}$.
    By re-aligning the new nodes, one can consider the identity map as an OT map for each problem  $GW(\mathbb X^s_b, \mathbb Y_b^*)$, $s=1,\dots, S$. In other words,   as in \eqref{eq: GW as Frob}, we have:
    \begin{align*}
    GW^2(\mathbb X^s, \mathbb Y^*)&= GW^2(\mathbb X^s_b, \mathbb Y^*_b) \\&= \sum_{j,k\in [M_b]}|\mathbf X_b^s[j,k]- \mathbf Y_b^*[j,k]|^2 \, \q_b[j]\q_b[k]\\
    &=\mathrm{tr}_{\q_b}\left((\mathbf X_b^s -\mathbf Y_b^*)^T(\mathbf X_b^s -\mathbf Y_b^*)\right).
\end{align*}
    We proceed as in the proof of Theorem \ref{prop: formula_bary} on the fixed-point approach. 
    Since one can consider the identity map on $[M_b]$ as an optimal map for the problem $GW((\mathbf X^s_b,\q_b), (\mathbf Y^*_b,\q_b))$,  in this case,  the functional $J_\lambda:\R^{M_b\times M_b}\to \R_{\geq 0}$  can be written as
     \begin{equation*}
        J_\lambda(\mathbf Y)=\sum_{s\in [S]}\lambda_s\sum_{j,k\in [M_b]}|\mathbf X_b^s[j,k]- \mathbf Y[j,k]|^2 \, \q_b[j]\q_b[k]=\sum_{s\in [S]}\lambda_s \, \mathrm{tr}_{\q_b}\left((\mathbf X_b^s -\mathbf Y)^T(\mathbf X_b^s -\mathbf Y)\right)
    \end{equation*}
    (compare to the expression \eqref{eq: J functional}).
    We know that it admits one and only one minimizer, which is in fact given by the expression $$\sum_{s=1}^S\lambda_s \mathbf X^s_b.$$ This can be deduced from \eqref{eq: relation} or simply because minimizing $J_\lambda$ coincides with the barycentric problem within the Euclidean geometry (see \eqref{eq: bary ad convex comb2} in Section \ref{sec: BCM}). 
    Also, since $\mathbb{Y}^*$ is a GW barycenter, we have
        \begin{align*}
        \sum_{s\in [S]}\lambda_s \, GW^2(\mathbb X^s, \mathbb Y^*)
        &=\min_{[\mathbb Y]}\sum_{s\in [S]}\lambda_s \, GW^2(\mathbb X^s, \mathbb Y)\\
        &=\min_{[\mathbb Y]}\sum_{s\in [S]}\lambda_s \, GW^2(\mathbb X^s_b, \mathbb Y)
        \\&\leq\sum_{s\in [S]}\lambda_s \, GW^2\left((\mathbf X_b^s, \, \q_b), \left(\sum_{r\in [S]}\lambda_r\mathbf X_b^r, \, \q_b\right)\right)\\
        &\leq \displaystyle\sum_{s\in [S]}\lambda_s\sum_{j,k\in [M_b]}\left|\mathbf X_b^s[j,k]- \left(\sum_{r\in [S]}\lambda_r\mathbf X_b^r\right)[j,k]\right|^2  \q_b[j]\q_b[k]\\&=\min_{\mathbf Y\in \R^{M_b\times M_b}}J_\lambda(\mathbf Y)\\
        &\leq \sum_{s\in [S]}\lambda_s\sum_{j,k\in [M_b]}\left|\mathbf X_b^s[j,k]- \mathbf Y_b^*[j,k]\right|^2  \q_b[j]\q_b[k]\\
        &=\sum_{s\in [S]}\lambda_s \, GW^2(\mathbb X^s_b, \mathbb Y^*_b)
        \\&=\sum_{s\in [S]}\lambda_s \, GW^2(\mathbb X^s, \mathbb Y^*) 
        \end{align*}
    (where the second to last of the inequalities follows from the fact that $\mathrm{diag}(\q_b)\in \Pi(\q_b,\q_b)$ is an admissible plan for $GW\left((\mathbf X_b^s, \, \q_b), \left(\sum_{r\in [S]}\lambda_r\mathbf X_b^r, \, \q_b\right)\right)$ but not necessarily an optimal coupling),   
    establishing that all the inequalities in the chain are actually equalities. Therefore, the matrix $\mathbf Y_b^*$ is a minimizer of $J_\lambda$, implying that $\mathbf Y_b^*=\sum_{r\in [S]}\lambda_s\mathbf X_b^s$ (by uniqueness of the minimum of $J_\lambda$, which is strictly convex due to having $\q_b$ with all entries strictly positive). Hence, 
    \begin{align*}
     \mathbb Y^*\sim^w(\mathbf Y_b^*,\q_b)=\left(\sum_{s\in [S]}\lambda_s\mathbf X_b^s,\q_b\right).    
    \end{align*}

\end{proof}

\section{Computational Complexity of Algorithms \ref{alg: analysis} and \ref{alg: analysis blow up}}\label{app: comp comp}

For simplicity, let us assume that the number of nodes is the same across the templates, that is, assume $N^s=N$ for all $s\in [S]$. Assume the size of the input network $\mathbb Y=(\mathbf Y,\q)$ is $M$ (i.e., $\mathbf Y]in \R^{M\times M}$ and $\q\in \R^M$).
\\

\noindent \textbf{Computation of GW Distances:}
The Gromov-Wasserstein problem is inherently NP-hard due to its formulation as a quadratic assignment problem \cite{kravtsova2024np, scetbon2022linear, chen2024semidefinite, COTFNT, zhang2023duality}. In practice, approximation schemes are employed to mitigate this computational burden.
For example, if we consider two networks with $N$ and $M$ nodes, respectively,  evaluating the GW objective for a fixed coupling matrix incurs a computational cost of $\mathcal{O}(N^2 M^2)$ that can be approximated using entropic regularization \cite{peyre2016gromov} reducing its cost to  $\mathcal{O}((N^2 M + N M^2))$. 
The original optimization problem can then be approximately solved using a sequence of Sinkhorn iterations, with each iteration cost being dominated by the complexity of evaluating the GW objective. Taking $L$ iterations to convergence, the total cost of solving a single GW problem becomes $\mathcal{O}((N^2 M + N M^2)L)$.

In our experiments, from the POT Library \cite{flamary2021pot}, 
we employed the functions:
\begin{itemize}
    \item \texttt{ot.gromov.gromov\_wasserstein}, which returns an optimal GW transport plan (by solving the optimization problem via a conditional gradient --Frank-Wolfe-- method, for which one iteration requires approximately $N^2M+NM^2$ operations plus the cost of solving one linear OT subproblem), and 
    \item \texttt{ot.gromov.gromov\_barycenters} (which uses block coordinate descent when optimizing) based on the fixed-point algorithm proposed in \cite{peyre2016gromov}, which  
is similar to Algorithm \ref{alg: synthesis}.
\end{itemize}

\bigskip

\noindent \textbf{Algorithm \ref{alg: analysis}:}
\begin{enumerate}[leftmargin=*]
    \item {Computation of GW distances:} 
    First, one needs to solve (or estimate) as many GW problems as templates ($S$). Thus,  
    extending the  discussion above  to $S$ templates, the overall complexity to \emph{estimate} $\{\pi(\mathbf Y,s)\}_{s\in [S]}$ is essentially $\mathcal{O}((N^2 M + NM^2) LS)$. 
    \item Computing each matrix $F(\mathbf Y, s)$ requires $\mathcal{O}(N^2M+NM^2)$ operations, leading to a total complexity of $\mathcal{O}((N^2M+NM^2)S)$.
    \item Each trace operation between two $M\times M$ matrices costs $\mathcal{O}(M^2)$. Therefore, computing the matrix $\mathcal{Q}$ requires $\mathcal{O}(M^2S^2)$ operations (or, more precisely, $\mathcal{O}(M^2\frac{S(S+1)}{2})$ leveraging on the fact that $\mathcal{Q}$ is symmetric).
    \item     Solving the quadratic program  $\min_{\lambda\in \Delta_{S-1}} \lambda^T\mathcal{Q}\lambda$ is equivalent to $\min_{\lambda\in \Delta_{S-1}} \lambda^TK\lambda -2\lambda^T b$ (see \eqref{eq: quadratic program 1}). 

    In our simulations, this is done using the iterative Alternating Direction Method of Multipliers (ADMM)-based first-order method, implemented via the function \texttt{cvxpy.OSQP} within the \texttt{cvxpy} optimization library in Python, which has a computational complexity of $\mathcal O{(S^3+L'S^2)}$, where $L'$ is the number of iterations. 

    In practice, the number of iterations $L'$ can be much larger than $S$ (since, in this work,  our experiments involve a small number $S$) thus making this unnecessarily costly. A heuristic to reduce this is to relax the linear problem $K\lambda=b$ s.t. $\lambda\in \Delta_{S-1}$ by only considering the equality restriction $\sum_{s\in [S]} \lambda_s=1$. This is done using linear solvers that cost $\mathcal O(S^3)$.
\end{enumerate}
This yields a total complexity of order essentially
$\mathcal{O}((N^2M+NM^2)LS +M^2S^2+S^3)$.
Since the number of templates 
$S$ is typically much smaller than the number of nodes $N$, we can conclude that the computational complexity of Algorithm \ref{alg: analysis} is comparable to solving $S$ Gromov-Wasserstein optimization problems \eqref{eq: rest_gw}. This could be further improved  by parallelizing the  
$S$ Gromov-Wasserstein computations.
\\

\noindent \textbf{Algorithm \ref{alg: analysis blow up}:}
\begin{enumerate}[leftmargin=*]
    \item Blow-up step: The computational complexity of Algorithm \ref{alg: blow up}, is essentially equivalent to solving $S$ Gromov-Wasserstein optimization problems.  

    The $S$ concatenated GW problems that we need to solve are defined between networks whose sizes increase with $S$. Specifically, after performing the first blow-up between the first template $\mathbb X^1$, with $N$ nodes, and $\mathbb Y$, with $M$ nodes, we obtain new networks that are weakly isomorphic to the originals but have size $|\mathrm{supp}{(\pi^1)}|\geq\max\{N,M\}$, where $\pi^1\in \Pi(\p^1,\q)$ is an optimal coupling for $GW(\mathbb X^1,\mathbb Y)$. This process continues similarly for subsequent steps, involving blow-ups for the rest of the templates $\mathbb X^2, \dots, \mathbb X^S$ with respect to $\mathbb Y$, possibly increasing the number of nodes at each iteration $s\in [S]$.

    In the case of classical OT, any optimal coupling--which is a feasible solution of the corresponding linear program--has support size at most 
    $N+M-1$, assuming the source and target discrete measures have support of size $N$ and $M$, respectively (see, e.g., \cite[Prop. 3.4]{COTFNT}). Linear programming theory does not apply in the GW case due to the quadratic nature of the objective, even though the optimization is performed over the same set of transport plans (or couplings). As a result, no support size bound analogous to that in classical OT is currently known. For empirical insight, we refer the reader e.g. to \cite[Suppl. Mat., Sec. C]{chowdhury2020gromov}, where the authors observe that representations often occur in a space much smaller than the naive requirement of size $NM$.

    As a result, accurately estimating the total computational cost of this step is challenging.

\item Suppose that,  after the blow-up step,  each $\mathbb X^s$ has been aligned to $\mathbb Y$,   so that the collection $\{\mathbb Y_b=(\mathbf Y,\q_b),\mathbb X^1_b=(\mathbf X^1,\q_b),\dots, \mathbb X^S_b=(\mathbf X^S,\q_b)\}$ consists of networks with $M_b$ nodes. Then, similarly to Algorithm \ref{alg: analysis}, computing the matrix $\mathcal{A}$ is of order $\mathcal O({M_b^2 S^2})$, and  solving the quadratic program over the simplex $\min_{\lambda\in \Delta_{S-1}} \lambda^T\mathcal{A}\lambda$ costs approximately $\mathcal{O}(S^3)$. 
\end{enumerate}

\section{Experiments}\label{app: experiments}

This section aims to supplement Section \ref{sec: experiments} with additional visualizations and experiments.
Suppl. Mat. \ref{app: classif} expands on the material in Subsection \ref{sec: classif main}. Suppl. Mat. \ref{app: experiments - corr} presents an additional application of the proposed GW-BCM.

\subsection{Clustering, Classification, and Visualizations}\label{app: classif}

We provide supplementary material that builds on Subsection \ref{sec: classif main}.

\subsubsection{Additional Visualizations}\label{app: classif max}
In addition to Figures \ref{fig: baryspace01}, we provide a complementary illustration that helps interpret the meaning of GW barycentric coordinates.

\textbf{Description of Fig. \ref{fig: Delta_2}:} While Figure \ref{fig: baryspace01} considers two classes from the Point Cloud MNIST 2D dataset \cite{Garcia2023PointCloudMNIST2D}, the new Fig. \ref{fig: Delta_2} extends the setting to three classes. From the Point Cloud MNIST 2D dataset \cite{Garcia2023PointCloudMNIST2D}, we consider three digit classes: 0, 4, and 7. 
We select one reference point cloud per class, and compute and plot  GW barycentric coordinates $(\lambda_1,\lambda_2,\lambda_3)$ for each point cloud in the set. 
The visualization provided in Fig. \ref{fig: Delta_2} highlights how GW barycentric coordinates capture meaningful structure in the data, with a particularly clear clustering around the template for digit class 0. In contrast, digit classes 4 and 7 exhibit overlap, suggesting the presence of shared geometric shapes between them.

\begin{figure}[h!]
    \centering
    \includegraphics[width=1\linewidth]{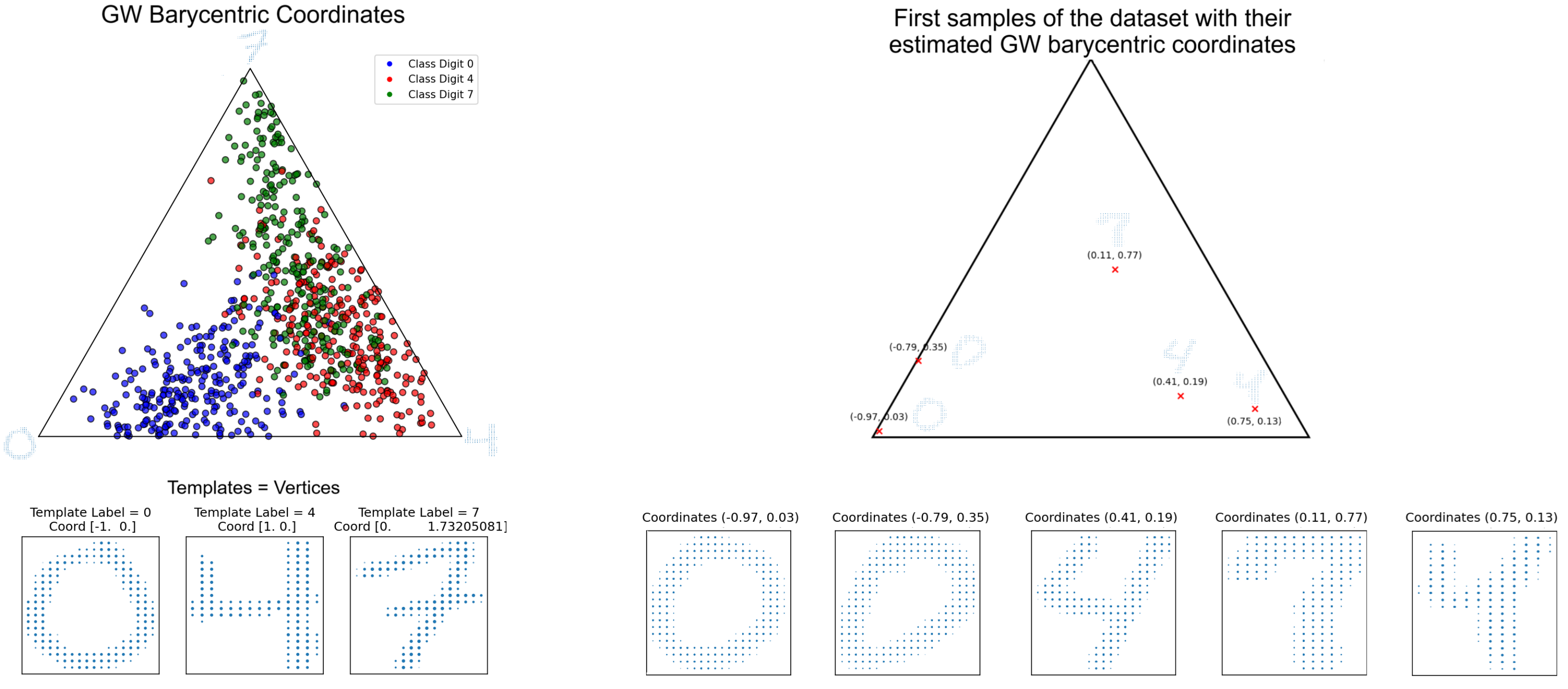}
    \caption{\small{    
    From the Point Cloud MNIST 2D dataset \cite{Garcia2023PointCloudMNIST2D}, we consider three digit classes: 0, 4, and 7. One representative sample from each class is selected and used as a template, placed at the vertices of an equilateral triangle that represents the 2-simplex $\Delta_2$. Then, for 900 randomly selected samples from the dataset, we estimate their GW barycentric coordinates using Algorithm \ref{alg: analysis}, and project them into the triangle (left panel). Each point is colored according to its true class label. As an additional illustration, the right panel shows the first five point clouds from the dataset together with their positions within the triangle according to their computed GW barycentric coordinates.    }}
    \label{fig: Delta_2}
\end{figure}

\subsubsection{Additional Classification Experiments} \label{sec: additional_classification_app}

We run additional classification experiments on graph datasets from the public TU Dortmund collection~\cite{morris2020tudataset}, in order to compare our analysis method (in particular, Algorithm \ref{alg: analysis}) with the GW-Dictionary Learning approach proposed in \cite{D-vincent2021online}. In both cases, we treat the resulting representations as graph embeddings and perform classification in the corresponding estimated GW-coordinate spaces. The results are reported in Table \ref{table: app other datasets}.

For these datasets, pairwise node dissimilarities are no longer Euclidean and are defined as shortest-path distances on the graph. The dimensionality of the resulting (low-dimensional) GW-coordinate spaces was chosen to be the same for both approaches, and is determined as follows: (a) in our method, by analysis with respect to a set of three randomly selected templates per class; (b) in the GW-Dictionary approach, by the unmixing weights associated with a dictionary of $3 \times C$ learned atoms, where $C$ denotes the number of classes.

In the GW-Dictionary approach, the dictionary atoms are learned in an offline phase using the full dataset, while the embedding of each graph is obtained by solving a GW linear-like problem to compute the corresponding unmixing weights (please refer to \cite{D-vincent2021online} for the precise setup and details). In contrast, our method does not require any training phase to compute the embedding coordinates.

We performed 20 repeated stratified $70/30$ train/test splits. For each split, a 5-fold cross-validation procedure was used to select the optimal number of neighbors $k$ for the $k$-NN classifier, which was then evaluated on the corresponding test split. The embedding times reported in Table \ref{table: app other datasets} correspond to the time required to compute the embedding coordinates for the full dataset. For the GW-Dictionary method (DICT), the reported offline time is the cost of learning the dictionary atoms. The comparable classification accuracies indicate that both embedding strategies capture similar discriminative information from the graph data. While the DICT baseline achieves slightly higher accuracies, our method is competitive (there is not statistically significant difference) and does not require the offline training time reported for learning the dictionary atoms.

\begin{table}[h]
\centering
\setlength{\tabcolsep}{6pt}
\renewcommand{\arraystretch}{1.3}
\begin{tabular}{lcccc}
\hline
 & \multicolumn{2}{c}{\textbf{Accuracy}}
 & \multicolumn{2}{c}{\textbf{Avg Embedding Time (s)}} \\
\textbf{Dataset}
 & \textbf{FP} & \textbf{DICT}
 & \textbf{FP} & \textbf{DICT} \\
\hline
\textbf{PROTEINS}
 & $0.6678 \pm 0.0249$
 & $0.6711 \pm 0.0269$
 & 26.93
 & 14.37 \,(offl.\ 569.23) \\
\textbf{MUTAG}
 & $0.6904 \pm 0.0558$
 & $0.7035 \pm 0.0557$
 & 2.29
 & 2.76 \,(offl.\ 42.38) \\
\textbf{COX2}
 & $0.7656 \pm 0.0153$
 & $0.7741 \pm 0.0171$
 & 10.06
 & 10.77 \,(offl.\ 124.10) \\
\hline
\end{tabular}
\caption{
Classification accuracy and average runtimes.
For GW-Dictionary Learning (DICT) \cite{D-vincent2021online}, embedding time corresponds to unmixing the full dataset once;
dictionary learning (atoms) is performed offline and reported in parentheses.
}
\label{table: app other datasets}
\end{table} 

\subsubsection{Minimum GW Barycenter
Loss - Experimental Setup}\label{app: classif min}\,
The following is an additional classification method based on the proposed GW-BCM approach.
\begin{enumerate}[leftmargin=*]
    \item Let $n$ be the number of classes in a dataset. Select $S$ random points of each class.
    \item If  needed, preprocess each data point, obtaining elements of the form $(\mathbf{X}, \p)$: a dissimilarity or edge weight matrix between nodes, together with a mass distribution on its nodes.
    \item Given an input $(\mathbf{X}, \p)$ from the data set, compute the GW barycentric coordinates (i.e., apply Algorithm \ref{alg: analysis} or \ref{alg: analysis blow up}) using the references from each class separately getting a set of $n$ vectors $\{\lambda_i\}_{i=1}^n$, where $\lambda_i\in \Delta_{S-1}$. 
    \item Synthesis set: For each vector $\lambda_i$, synthesize the corresponding GW barycenters using the $S$ templates, obtaining a total of $n$ matrices $\{\mathbf X_{\lambda_i}\}_{i=1}^n$ (with probability vectors $\{\p_{\lambda_i}\}_{i=1}^n$).
    \item Compute GW distances between input and the synthesized GW barycenters, and select the class with the minimum GW distance, that is:
    \begin{equation}\label{eq: pred}
      \text{Predicted class for input } (\mathbf X,\p) = \argmin_{1\leq i\leq n}GW((\mathbf X,\p),(\mathbf X_{\lambda_i},\p_{\lambda_i})) . 
    \end{equation}
\end{enumerate}

As a proof of concept, we apply the methodology described above to the MNIST PointCloud dataset \cite{Garcia2023PointCloudMNIST2D} and compute the accuracy between the true and predicted labels. Results are reported below:
\begin{itemize}[leftmargin=*]
    \item For classifying digit classes 0 and 1 within a set of 400 point cloud digits, the overall accuracy obtained using $S = 1$ or $S = 2$ sample point clouds per class is around 0.91. When increasing $S$ to 4, the accuracy improves to 0.96.
    \item For classifying digit classes 0, 1, and 2 within a set of 600 point cloud digits, the overall accuracy obtained using $S = 1$ or $S = 2$ sample point clouds per class is 0.8. When increasing $S$ to 4, the accuracy does not improve.
\end{itemize}
In conclusion, the performance of this approach is not particularly strong, and since it involves GW synthesis with estimated GW barycentric coordinates, along with multiple GW computations for classification (see \eqref{eq: pred}), it is computationally expensive. 

\subsection{Point Cloud Corruption 
due to Occlusion}\label{app: experiments - corr}

In real-world applications, data corruption is inevitable due to factors such as scene complexity, sensor inaccuracies, and processing imprecision. For point cloud data, we focus on a specific form of corruption: the occlusion of portions of the point cloud.
Below, we describe the experimental setup used to address this issue.
In our experiments we use the Euclidean distance (in $\R^2$ or $\R^3$) for the dissimilarity matrices, i.e., $\mathbf Y_{ij}=\|y_i-y_j\|$. All the synthesized GW barycenters are computing by applying the function   \texttt{ot.gromov.gromov\_barycenters} from the POT Library \cite{flamary2021pot}.     

\medskip

\noindent \textbf{Experimental Setup:}

\begin{enumerate}[leftmargin=*]
    \item Input  a 2D or 3D point cloud represented by the discrete probability measure $\mu_Y=\sum_{j=1}^M \q[j]\delta_{y_j}$, where $\q\in \mathcal{P}_M$. Compute the dissimilarity matrix $\mathbf {Y}$ between the nodes $\{y_j\}_{j=1}^M$.  
    Simulate occlusion by computing
    $\widetilde{\mu_{Y}}$ from $\mu_Y$ with a removed portion. Re-normalize the corresponding vector of weights $\widetilde \q$ so that it has total mass $1$.  Compute the dissimilarity matrix $\widetilde{\mathbf{Y}}$ between the remaining nodes.
    \item From the dataset that is used as a model for the given input, select a set of samples  to serve as the reference measures $\mu_{X^s}=\sum_{i=1}^{N^s}\p^s[i]\delta_{x_i^s}$, for $s\in [S]$ (if $\mu_Y$ was taken from such dataset, select templates in the same class as $\mu_Y$). Compute the dissimilarity matrices $\mathbf X^s$. 
    Perturb the templates letting $\widetilde{\mu_{X^s}}$  be as $\mu_{X^s}$ with a removed portion (the same one as the simulated in step (1)), and re-normalized the vector of weights getting $\widetilde{\p^s}$, for each $s\in [S]$. Compute the new dissimilarity matrices $\widetilde{\mathbf X^s}$ across the remaining nodes.  
    \item Apply Algorithm \ref{alg: analysis} (or Algorithm \ref{alg: analysis blow up}) with inputs $\{(\widetilde{\mathbf X^s},\widetilde{\p^s})\}_{s\in [S]}$, $(\widetilde{\mathbf Y},\widetilde{\q})$ to compute estimated coordinates $\widetilde \lambda$. After that, apply Algorithm \ref{alg: synthesis} (or Algorithm \ref{alg: analysis blow up}) with inputs $\{({\mathbf X^s},{\p^s})\}_{s\in [S]}$, $\widetilde \lambda$, ${\q}$, $M$ in order to synthesize a barycenter $\mathbf{Y}_{\widetilde{\lambda}}$ from the     original (unperturbed) templates. We refer to $\mathbf{Y}_{\widetilde{\lambda}}$  as a  reconstruction of $\mathbf Y$.  Apply MDS embedding (with 2 or 3 components) to $\mathbf{Y}_{\widetilde{\lambda}}$ in order to visualize the recovered point cloud. 
\end{enumerate}

\begin{figure}[h!]
    \centering
    \begin{subfigure}[c]{0.49\linewidth}
        \centering
        \fbox{\includegraphics[width=0.97\linewidth]{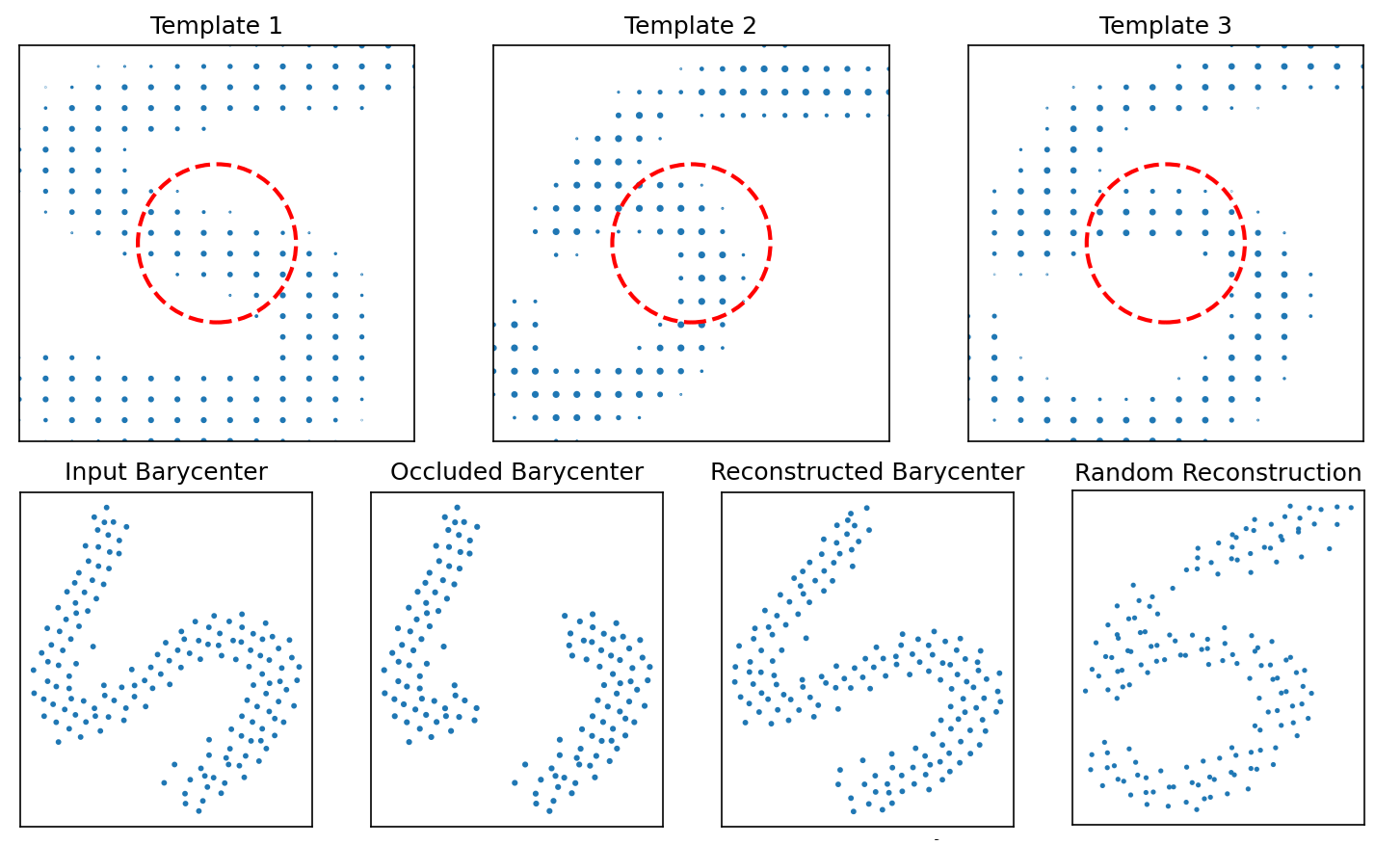}}
    \end{subfigure}
    \hfill
    \begin{subfigure}[c]{0.48\linewidth}
        \centering
        \fbox{\includegraphics[width=0.97\linewidth]{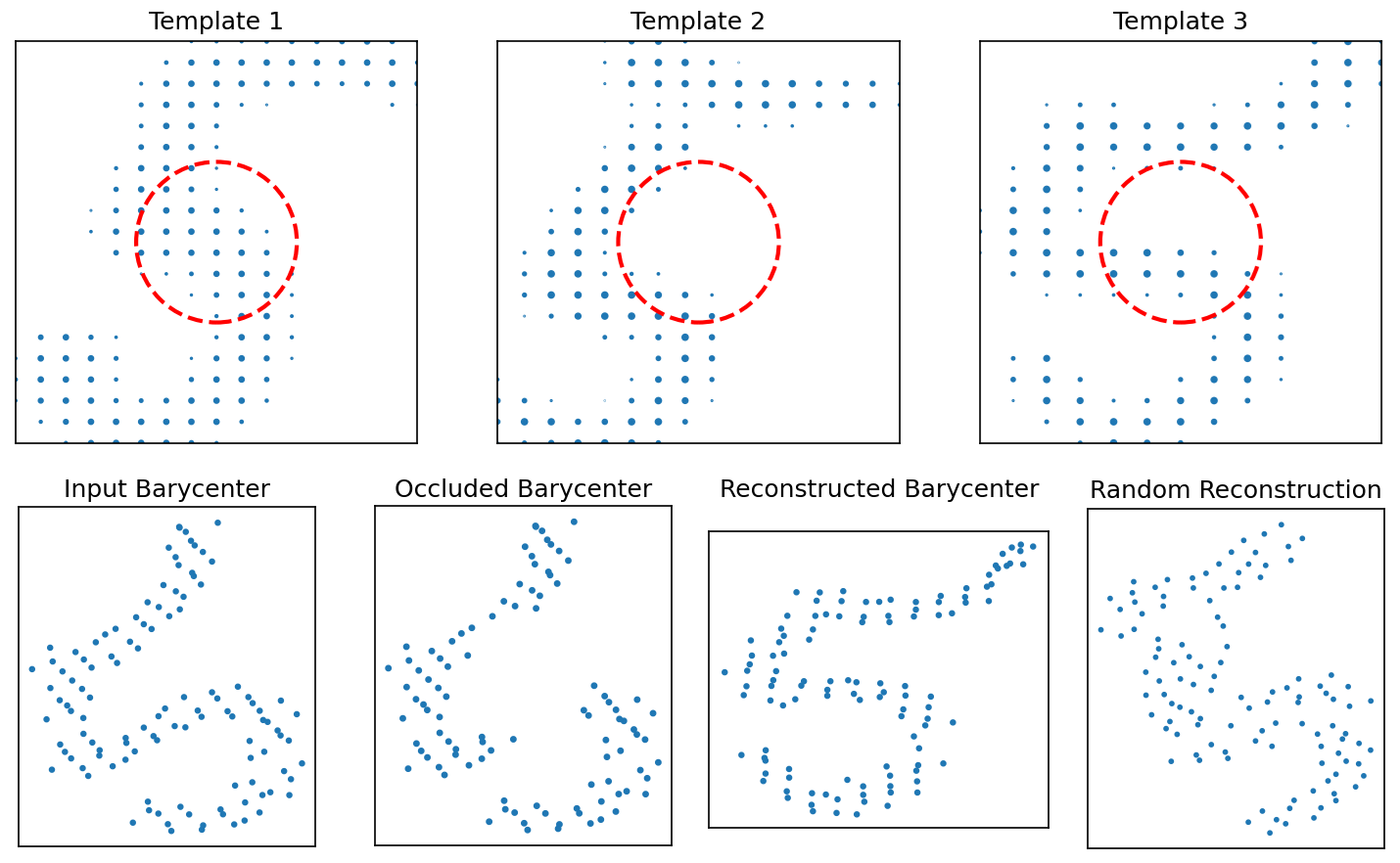}}
    \end{subfigure}
    \caption{\small{Occlusion with a circular mask (rotation-invariant) and reconstruction (two examples). \textbf{Top row:} Templates. \textbf{Bottom row from left to right:} Synthetic GW barycenter; its perturbation (via occlusion of the central region using a circular mask); the reconstructed point cloud using our proposed method (i.e., GW barycenter coordinates to the corrupted templates are computed using Algorithm \ref{alg: analysis} and then used to synthesize a GW barycenter with the unperturbed templates); and a random reconstruction (i.e., a synthesized barycenter using a random weight vector $\lambda \in \Delta_1$ and the unperturbed templates).}}
    \label{fig: occ_circular_and_rand}
\end{figure}

In the examples shown in Figure
\ref{fig: occ_circular_and_rand}, the BCM paradigm proves to be an effective approach to image reconstruction, as evidenced by its ability to recover the missing regions of a perturbed Gromov-Wasserstein (GW) barycenter. A synthetic barycenter is partially occluded using a circular mask, simulating structured data loss. Leveraging geometric information from the original templates (top panel), the BCM framework successfully reconstructs the barycenter (bottom right).

\subsection{
Exploring Alternative Node Dissimilarities: Euclidean and Non-Euclidean}\label{sec:NonEuc}

From the experimental perspective, further explorations using different node dissimilarities would be interesting to analyze. In this section we show some initial empirical results.

\begin{wrapfigure}[40]{r}{0.5\textwidth}

\centering

\begin{subfigure}[t]{0.9\linewidth}
  \centering
  \includegraphics[width=\linewidth]{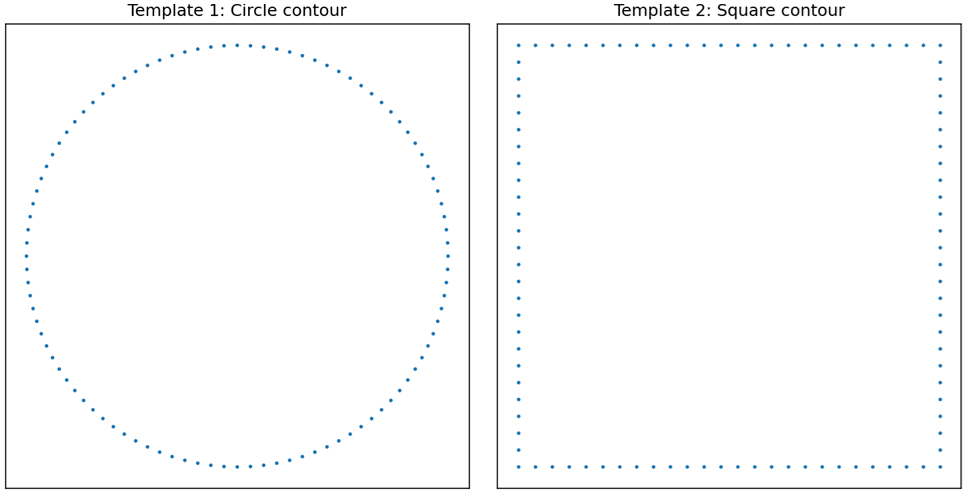}
  \caption{\small Templates.}
  \label{fig:sub1}
\end{subfigure}\par

\begin{subfigure}[t]{0.9\linewidth}
  \centering
  \includegraphics[width=\linewidth]{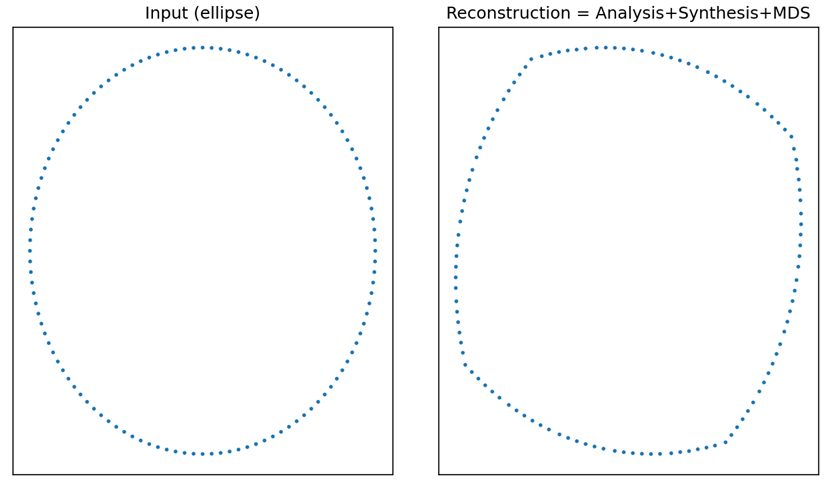}
  \caption{\small Target and its reconstruction.}
  \label{fig:sub2}
\end{subfigure}\par

\begin{subfigure}[t]{0.9\linewidth}
  \centering
  \includegraphics[width=\linewidth]{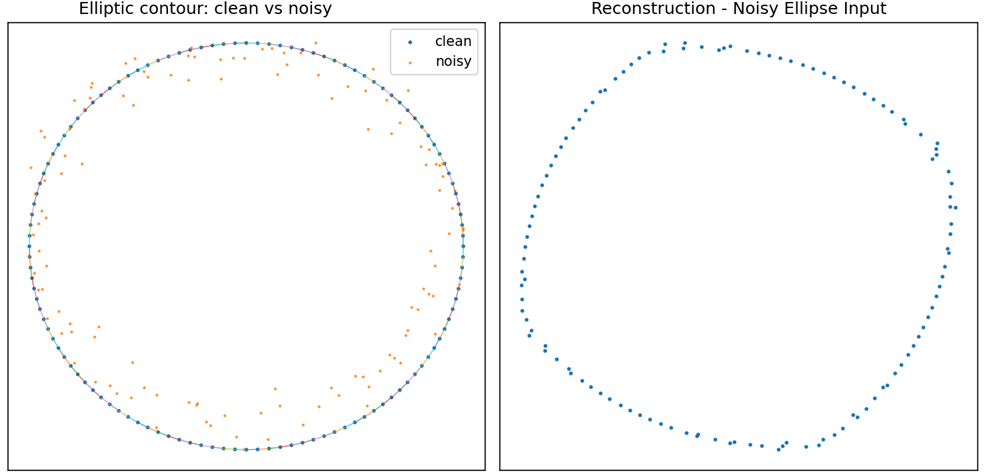}
  \caption{\small Noisy target and its reconstruction.}
  \label{fig:sub3}
\end{subfigure}

\caption{\small{Ambient distance is considered to compute distance matrices between the points of the above shapes, i.e., the Euclidean distance in $\mathbb R^2$. 
In (b) and (c), reconstruction is performed by first solving the  analysis problem to estimate GW $\lambda$-coordinates with respect to the templates in (a); then synthesizing the corresponding GW barycenter with those weights and templates from (a); and finally using MDS on the recovered matrix for visualization. 
In (b) the recovered weight via the analysis algorithm \ref{alg: analysis} is (0.6,0.4).}}
\label{fig:three-in-row}
\end{wrapfigure}

We consider two fixed templates: a circle and a square shape as illustrated in Fig. \ref{fig:sub1}. We want to explore how the analysis-synthesis pipeline behaves with different node dissimilarities. Using the natural ambient-space node dissimilarity (Euclidean distance in $\mathbb{R}^2$), we observe in Fig. \ref{fig:sub2} ``artifacts'' or ``corners'' that prevent perfect recovery of a circular input (elliptic contour), likely due to the interaction between the square template and the use of the Euclidean metric in the plane. 
Here, recovery is done by first estimating weights via GW analysis and then synthesizing a GW barycenter with such weights.
Nevertheless, when the input ellipse is perturbed by noise, the reconstruction remains reasonably stable (Fig. \ref{fig:sub3}) in comparison with the original reconstruction with no noise (Fig. \ref{fig:sub2}). 

It is natural to ask ourselves how this would be influenced if intrinsic distances are considered, such as geodesic distances on manifolds. As a preliminary exploration, in Table \ref{fig:table pq}, we consider $p$-Fermat distances for different input shapes (also called ``power-weighted shortest-path distances''), see \cite[Definition 1.1]{james-little2022balancing}. The inputs (or targets) are 
defined as the set of points $(x,y)\in \mathbb R^2$ satisfying  $|x|^q+|y|^q=1$ (i.e., boundaries of a $q$-ball in the plane), for different values of $q$. Thus, when $q=2$ the input shape is a circle and when $q=1$, a square. We sample them uniformly.

For the geodesic approach on the templates, each template is represented by an intrinsic distance matrix: for the circle we use the exact arc-length (geodesic) distance along the curve, and for the square we use shortest-path distances along the ordered boundary cycle (i.e., sums of edge lengths along the contour). Moreover, to make the square case more informative (and avoid it effectively reducing to the circle case), we sample the square more densely near its corners while keeping a uniform measure on the sampled points; this concentrates more mass at the corners and accentuates their geometric influence. These choices replace ambient Euclidean distances between node points with distances measured along the shape itself, i.e., within the template’s one-dimensional geometry. In this preliminary exposition, we consider uniform mass across sample nodes in the target and template 1D-shapes as well as equal number of nodes.

Table \ref{fig:table pq} reports the estimated GW coordinates $\lambda=(\lambda_1,\lambda_2)$ for different values of $p$ and $q$ along with the GW distance between the input shape  ($q$-boundary ball with its corresponding $p$-Fermat distance matrix) and the recovered one. Reconstruction is done in the same fashion as in Fig. \ref{fig:three-in-row}: after weight ($\lambda$) estimation though GW-analysis, GW-synthesis is performed. The $\lambda_1$ coordinate corresponds to the circle shape and $\lambda_2$ to the square template.  

The only case in Table \ref{fig:table pq} where we obtain perfect reconstruction is for $p=1$ and $q=2$.
This corresponds to the circle as the target, with its distance matrix computed via a path (shortest-path) metric, since $p=1$ yields the standard graph/path distance.
As expected, the reconstruction is essentially exact in this setting $GW\approx 0$ and $\lambda$ concentrated in the first component (circle template).
Another notable trend is that for $q=1$ (the square) and for large $q$ (i.e., shapes increasingly close to a square), the barycentric coordinates concentrate on the square template, with $\lambda \approx (0,1)$.

\begin{figure}[h!]
    \centering
    \includegraphics[width=\linewidth]{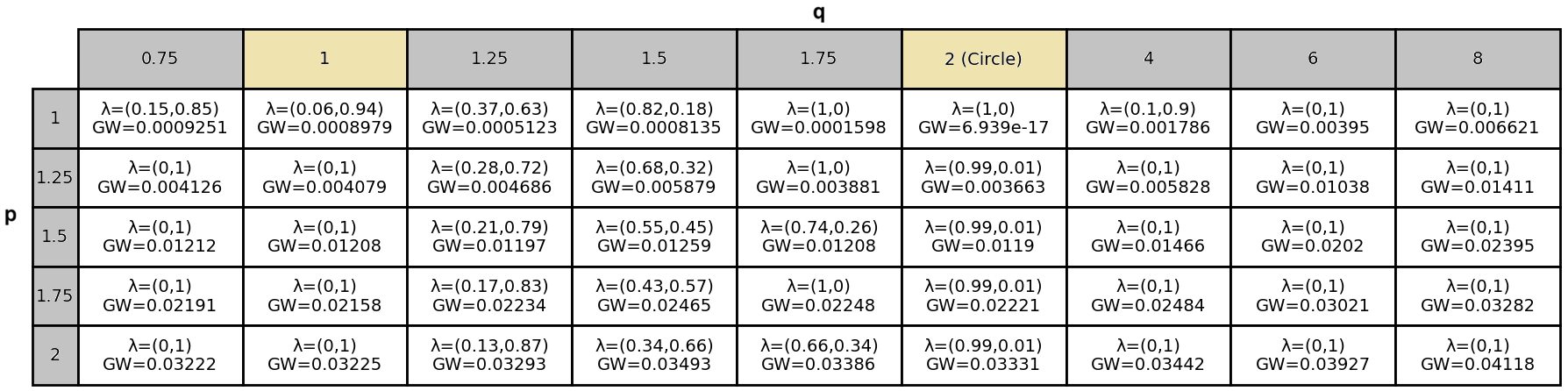}
    \caption{Report of $\lambda$-estimation for target shapes $|x|^q+|y|^q=1$  with distance matrices calculated via $p$-Fermat distances \cite[Definition 1.1]{james-little2022balancing}, for different values of $q$ (columns) and $p$ (rows). The templates in Fig. \ref{fig:sub1} are considered with internal geodesic distances. Shape reconstruction is performed by solving first the GW-analysis problem (via Algorithm \ref{alg: analysis}), and then synthesizing with the estimated weights and the fixed templates for all the experiments. GW distances between the initial target and its reconstruction are presented for each $(p,q)$-cell. }
    \label{fig:table pq}
\end{figure}

\section{The Analysis Problem Outside the Barycenter Span: A Preliminary and Empirical Exploration}\label{sec supp: projection}
Assume that $\{\mathbb X^s\}_{s\in [S]}$ are fixed templates,  and recall that we denote the elements of their GW barycenter span as $\mathbb Y_\lambda$, for $\lambda\in \Delta_{S-1}$. The objective of this section is to visualize, for an arbitrary target $\mathbb Y$, the heat map of the distance functional 
\begin{equation}\label{eq: functional J rev in text}
    \mathbf J(\lambda) := GW^2(\mathbb Y_\lambda, \mathbb Y),
\end{equation}
on the simplex $\Delta_{S-1}$, and the relation between its minima and the output of the analysis Algorithms \ref{alg: analysis} and \ref{alg: analysis blow up} when they are applied to the target $\mathbb Y$. 

By showing how such functional `looks like' over the simplex of weights, 
we aim to provide preliminary experimental results to the following questions: \emph{How the proposed GW analysis methods perform when the target shape $\mathbb Y$ does not belong to the GW barycenter space spanned by predetermined templates?} 
\emph{What happens when the minimum of the above functional \eqref{eq: functional J rev in text} is non-zero or cannot be attained exactly?} \emph{Do the weights $\lambda$ produced by the proposed GW analysis algorithms coincide with, or at least lie near, a minimizer of $\mathbf J$?}\\

\paragraph{Heatmap Generation} To visualize the heatmaps of $\mathbf J$ we will have to:
\begin{enumerate}
    \item Generate templates and a target.  
    \item Create a grid to discretize the simplex $\Delta_{S-1}$ and synthesize one GW barycenter per point $\lambda$; and 
    \item Compute the GW distance between the target and each synthesized barycenter of the grid.
\end{enumerate}

Assume that we are given three fixed templates ($S=3$) and a target. If we naively follow the steps above and synthesize GW barycenters in step 2 using already available libraries like \texttt{ot.gromov} \cite{flamary2021pot}, when running the procedure again we encounter different landscapes as seen in Fig. \ref{fig: bad lanscapces}. This behavior is mostly due to the issues inherent to instability of the GW barycenter synthesis problem. Although improving the synthesis algorithms is not essentially in the scope of this work, we added some tests to mitigate this variability. \\

\begin{figure}[h!]
    \centering
    \includegraphics[width=1\linewidth]{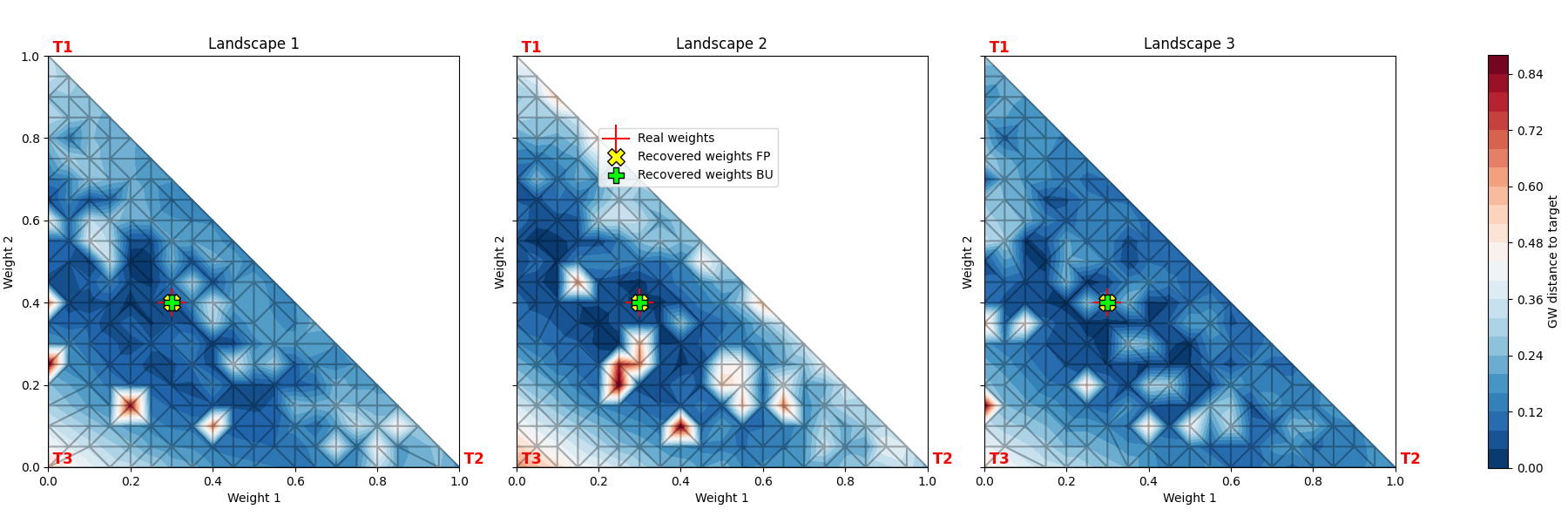}
    \caption{\small{Heatmaps illustrating the landscape of  $\mathbf{J}$ \eqref{eq: functional J rev in text} where no safeguards for GW barycenter synthesis process are applied for obtaining $\{\mathbb Y_\lambda\}_{\lambda\in \Delta_2}$. For the three plots, the same three fixed templates, and the same target $\mathbb Y$. Variability across the three landscapes is evident, even though the selected target lies within the GW barycenter space, as indicated by the red cross (weight).}}
    \label{fig: bad lanscapces}
\end{figure}

\paragraph{\textbf{Safeguards for the GW Barycenter Synthesis.}} 
The synthesis procedure requires to solve the non-convex minimization problem 
\begin{equation}\label{eq: syn app}
    \mathbb Y_\lambda \in \argmin_{\mathbb Z \in \mathrm{Bary}(\{[\mathbb X^s]\}_{s\in [S]})} \sum_{s=1}^S \lambda_s GW^2(\mathbb Z, \mathbb X^s).
\end{equation}
Non-convex problems often have no guarantees of uniqueness for global minimizers (for the same weight $\lambda$), and moreover, numerical solvers might obtain only local minima or may suffer from convergence issues. In particular, the function \texttt{ot.gromov.gromov\_barycenters} from the POT Library \cite{flamary2021pot}, used to generate GW barycenters turns to be numerically sensitive under different initializations: if we fix a size $M$, a vector $\mathrm q$ of weights, a set of templates $\{\mathbb X^s\}_{s\in [S]}$, and $\lambda\in \Delta_{S-1}$ and then use such function twice to synthesize $\mathbb Y$ and $\mathbb Y'$ (with the same inputs), they do not necessarily have GW distance $\approx 0$.

This already indicates that the landscapes for $\mathbf J$ (with the same target $\mathbb Y$) are going to change. Even worse, it also indicates that the synthesized object might not even be a GW barycenter. We added two safeguards to make sure that that is not the case: 
\begin{enumerate}
    \item[(a)] After synthesizing with the POT Library function, we test if the result is a weak GW barycenter by following Remark \ref{remark: test}. 
    \item[(b)] For a fixed $\lambda$ we generate several candidate barycenters, and choose the one that has minimum synthesis cost (i.e., minimum for the functional in \eqref{eq: syn app}).
\end{enumerate}

With this in mind, we created templates and targets inside and outside of the GW barycenter space, and reproduced the landscapes  in Fig. \ref{fig: landscape in Bary} and Fig. \ref{fig: landscape out Bary}, respectively. \\

\newpage

\paragraph{\textbf{Template Generation.}} We considered $S=3$ and generated three random point clouds of $N$ points each. For each, we assign the Euclidean distance between points as distance matrix, and assumed a uniform measure. In practice, using uniform measures helps make the POT Library’s barycenter synthesis solver output compatible with the blow-up procedure (as empirically noted in Figures \ref{fig: analysis_alg} and \ref{fig: 100p}), which in turn helps satisfy safeguard (a). We additionally tested the pairwise GW distance between templates to be bigger than a minimum threshold to ensure that the distance between grid points would end being (heuristically) bigger than the distance between different synthesized barycenter with the same weights. See Fig. \ref{fig: temp for land} for a visualization of a choice of such type of templates. \\
\begin{figure}[ht!]
    \centering
    \includegraphics[width=0.32\linewidth]{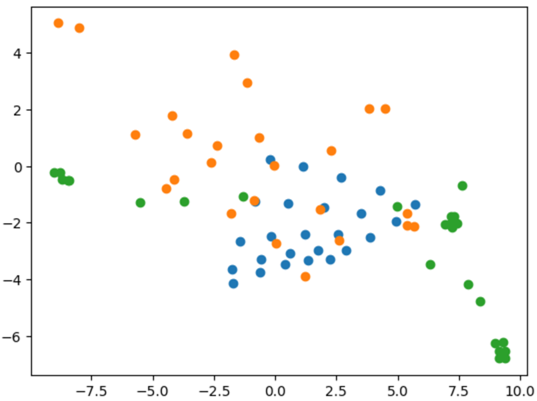}
    \caption{\small{Orange, blue and green point clouds illustrating a choice of 3 templates. }}
    \label{fig: temp for land}
\end{figure}

\paragraph{\textbf{Target Generation.}}
(1) To obtain a target outside the barycenter space, we generated one extra random point cloud of size $N$ as when creating the templates. (2) To obtain a target in the GW barycenter space of the templates, we artificially synthesized a new GW barycenter with a chosen weight $\lambda$ using the safeguards mentioned above. In our visualizations we considered $\lambda=(0.3,0.4,0.3)$.
Since the weight $\lambda$ was taken inside the grid, we made sure that the target was created before (and independently) of the grid barycenters to avoid forcing a zero of $\mathbf J$ at that point.\\

\paragraph{\textbf{Conclusions and Observations.}} 
Through this brute-force method for visualizing the $\mathbf J$ landscapes (Figures \ref{fig: landscape in Bary} and \ref{fig: landscape out Bary}), one can informally assess the relation between the proposed analysis Algorithms \ref{alg: analysis} and \ref{alg: analysis blow up} and the minima of the functional $\mathbf J$ for targets inside and outside the GW barycenter space. When the target lies in the barycenter space, both algorithms recover the expected weights. When outside the GW barycenter space, the recovered weights still coincide for both algorithms. In both cases, the recovered weights seem to lie in regions where $\mathbf J$ is small (deep blue regions in the heatmaps)
showing qualitative agreement with the landscape structure. Consequently, the out-of-model results are satisfactory. 

It is worth noticing that the lack of uniqueness for the synthesis problem (from both, theoretical and computational, perspectives) makes it very difficult to properly visualize the global minima of $\mathbf J$ for targets outside the barycenter space. 
Even though we included  all the technicalities mentioned above, trying to guarantee less variability of the (brute-force) heatmaps, still the three plots within Figures \ref{fig: landscape in Bary} and \ref{fig: landscape out Bary}, respectively, do not coincide exactly.

\begin{figure}[H]
    \centering
    \includegraphics[width=\linewidth]{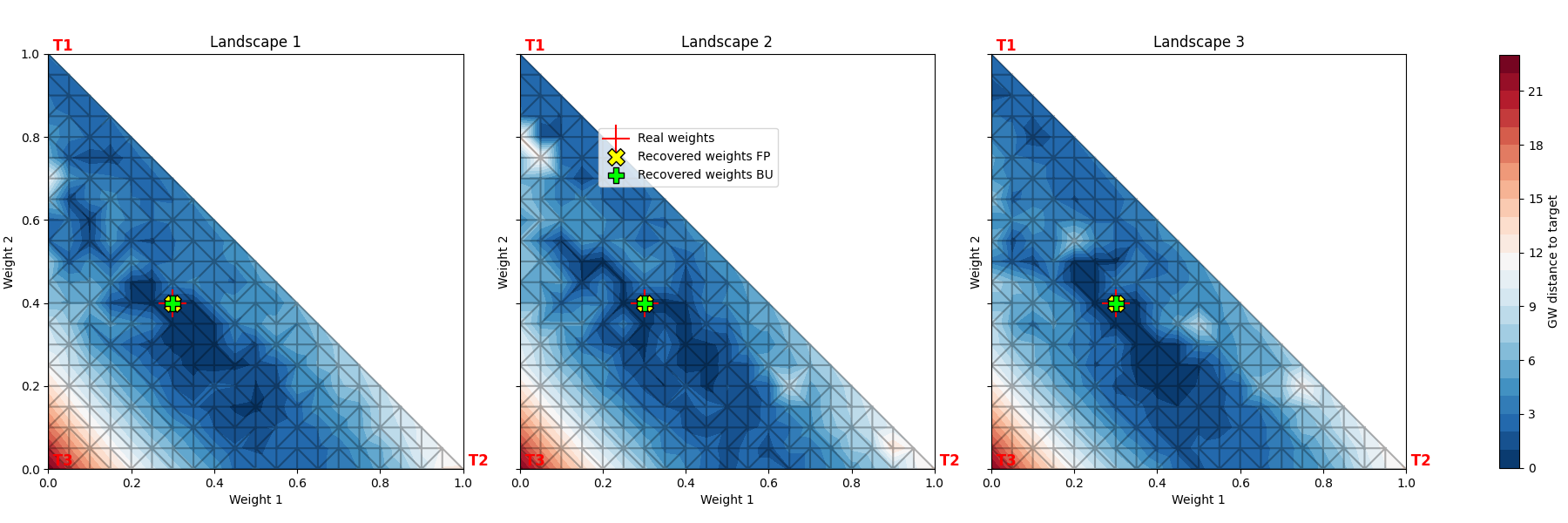}
    \caption{\small{
    Heatmaps illustrating the landscape of $\mathbf{J}$ over the first two components of the simplex $\Delta_2$.
For the same three fixed templates and the same target $\mathbb{Y}$ \emph{lying in their GW barycenter space}, we evaluate $\mathbf{J}(\lambda^{(i)})$ on a grid $\{\lambda^{(i)}\}\subset \Delta_2$.
The ground-truth weight $\lambda^*$ corresponding to $\mathbb{Y}$ is marked by a red cross.
The yellow and green crosses indicate the estimates of $\lambda^*$ produced by Algorithms \ref{alg: analysis} and \ref{alg: analysis blow up}, respectively.
The three panels correspond to three independent synthesis runs used to generate the barycenters $\{\mathbb{Y}_{\lambda^{(i)}}\}$ over the simplex grid, incorporating the technical safeguards introduced to mitigate large run-to-run variability.
}}
    \label{fig: landscape in Bary}

\end{figure}

\begin{figure}[H]
    \centering
    \includegraphics[width=\linewidth]{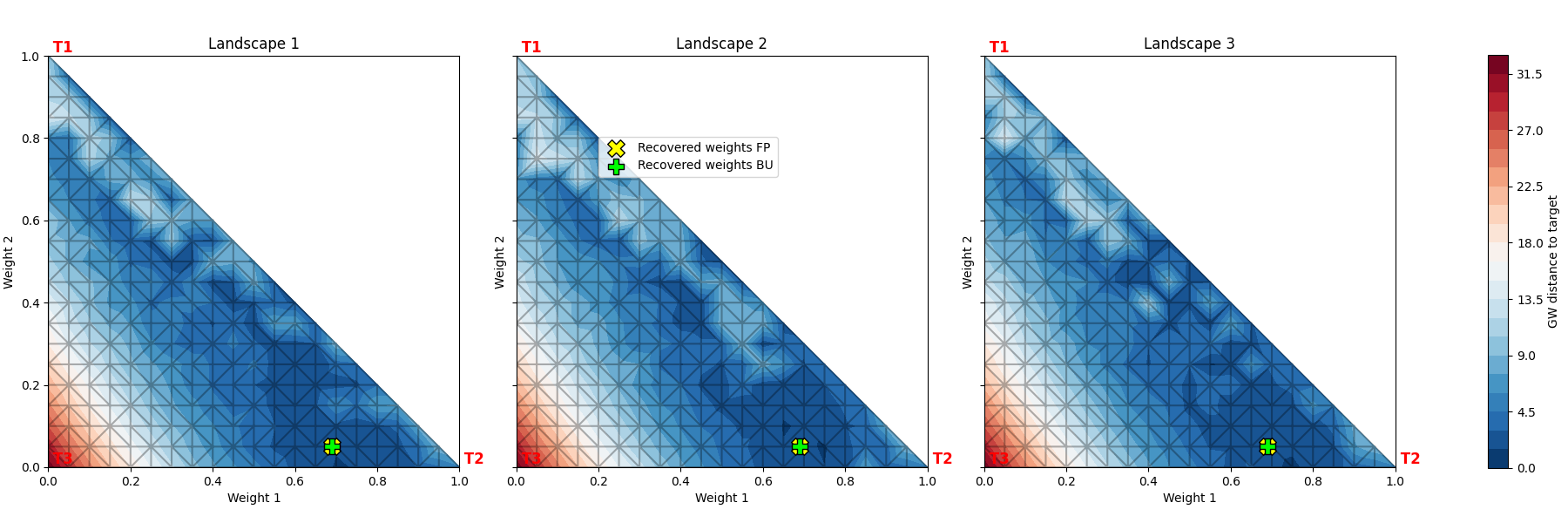}
    \caption{\small{
    Heatmaps illustrating the landscape of $\mathbf{J}$ over the first two components of the simplex $\Delta_2$.
For the same three fixed templates and the same target $\mathbb{Y}$ \emph{outside their GW barycenter space}, we evaluate $\mathbf{J}(\lambda^{(i)})$ on a grid $\{\lambda^{(i)}\}\subset \Delta_2$.
The yellow and green crosses indicate the estimates of GW $\lambda$-coordinates corresponding to $\mathbb Y$ produced by Algorithms \ref{alg: analysis} and \ref{alg: analysis blow up}, respectively.
The three panels correspond to three independent synthesis runs used to generate  barycenters $\{\mathbb{Y}_{\lambda^{(i)}}\}$ over the simplex grid, incorporating the technical safeguards introduced to mitigate large run-to-run variability.}}
    \label{fig: landscape out Bary}
\end{figure}

\subsection{Behavior Under Permutation of the Proposed Analysis Algorithms \ref{alg: analysis} and \ref{alg: analysis blow up}}\label{sec: permutation}

From the theoretical perspective, both of our analysis methods are permutation invariant: if we relabel the nodes of a network, the $\lambda$-estimation does not change.

\begin{itemize}[leftmargin=*]
    \item 
    In Section \ref{sec: Analysis}, we address  permutation/relabeling invariance in Remark \ref{remark: permutation}: The formulation of the analysis problem considered there (see problem \eqref{eq: analysis_gw_bary_problem}), in which the GW barycenter model is taken over shapes in $\mathrm{Bary}_{M,\q}$  (Definition \ref{eq: bary space for M and q fixed}),  together with its surrogate \eqref{eq: our_analysis_bary_problem}, 
    are  permutation/relabeling invariant.
    In both problems  \eqref{eq: analysis_gw_bary_problem}, \eqref{eq: our_analysis_bary_problem} we take the divergence $d_a(\cdot,\cdot)$ to be the Frobenius norm on the space of $M\times M$ matrices (which allows us to develop Algorithm \ref{alg: analysis}). Precisely, for fixed templates $\{(\mathbf X^s,\p^s)\}_{s\in[S]}$, given a network $(\mathbf Y,\q)$ and using the notation from Section \ref{sec: Analysis}, the landscape of the functional  

    \begin{equation}\label{eq: landscape FP}
        \Delta_{S-1}\ni\lambda\longmapsto\left\|\mathbf Y-\sum_{s\in [S]}\lambda_s \, F(\mathbf Y, s)\right\|_{\mathrm{Frob}}=\lambda^T\mathcal Q \lambda
\end{equation}
    is invariant under relabeling the nodes of $(\mathbf Y,\q)$.

    \item 
    In Section \ref{sec: analysis bu}, in formulating the analysis problem that leads to Algorithm \ref{alg: analysis blow up}, we work with the weak barycenter model $\widetilde{\mathrm{Bary}}$. Moreover, we consider equivalence classes (denoted as $[\mathbb Y]$ for weakly isomorphic networks) which already incorporate permutation/relabeling invariance: that is,  if the network $\mathbb Y$ is a representative of the class $[\mathbb Y]$, and we relabel the nodes of such network creating a new one $\mathbb Y'$, then $[\mathbb Y]=[\mathbb Y']$. Precisely, for fixed templates $\{[\mathbb X^s]\}_{s\in[S]}$, given a network $\mathbb Y=(\mathbf Y,\q)$ and using the notation from Section \ref{sec: analysis bu}, the landscape of the functional  
    \begin{equation}\label{eq: landscape BU}
        \Delta_{S-1}\ni\lambda\longmapsto\mathrm{tr}_{\q_b}\left(\left(\sum_{s=1}^S\lambda_s\mathbf X^s_b-\mathbf Y_b\right)^T\left(\sum_{r=1}^S\lambda_r\mathbf X^r_b-\mathbf Y_b\right)\right)=\lambda^T\mathcal A \lambda
\end{equation}
    is invariant under relabeling the nodes of $\mathbb Y$.
\end{itemize}

In this section we demonstrate this invariance through experimental visualizations. Please see Fig. \ref{fig:permutation_all}. For all the plots, we consider $3$ templates as point clouds selected as describe above in the main part of \ref{sec supp: projection}. Equal sized point clouds, with uniform weights across nodes, where used in all these illustrative experiments. 

\begin{figure}[H]
    \centering
    \begin{subfigure}[b]{0.49\linewidth}
        \centering
        \includegraphics[width=\linewidth]{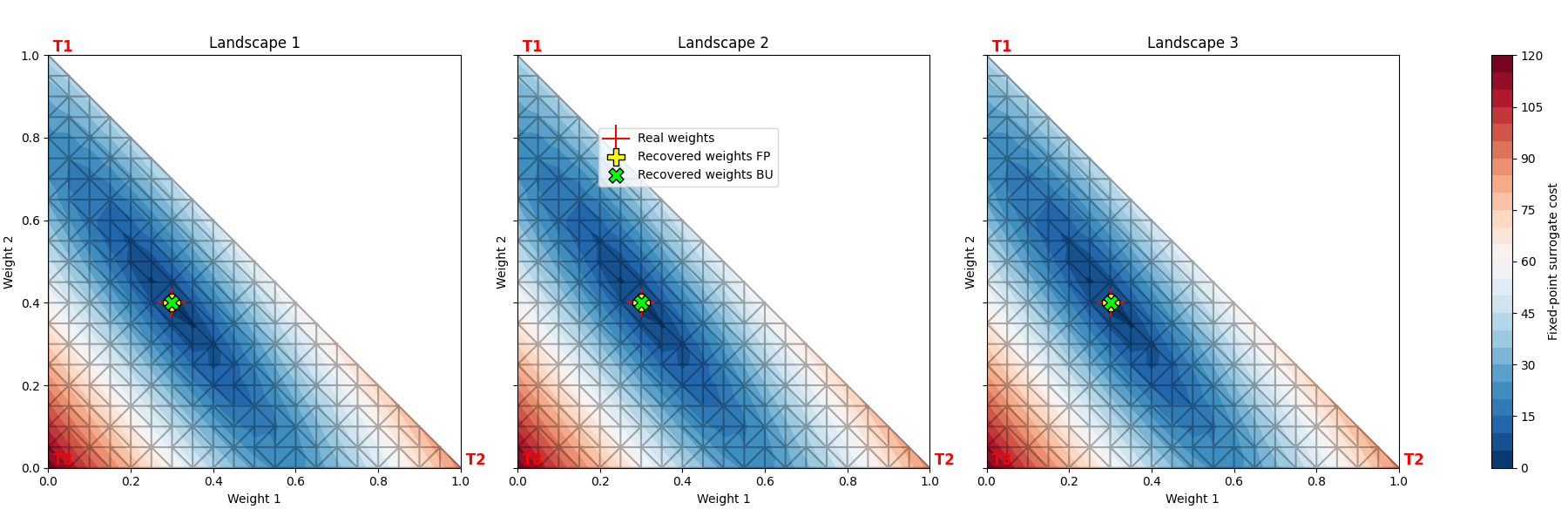}
        \caption{\small{Inside Barycenter Space -- FP landscape.}}
        \label{fig:inside_fp}
    \end{subfigure}\hfill
    \begin{subfigure}[b]{0.49\linewidth}
        \centering
        \includegraphics[width=\linewidth]{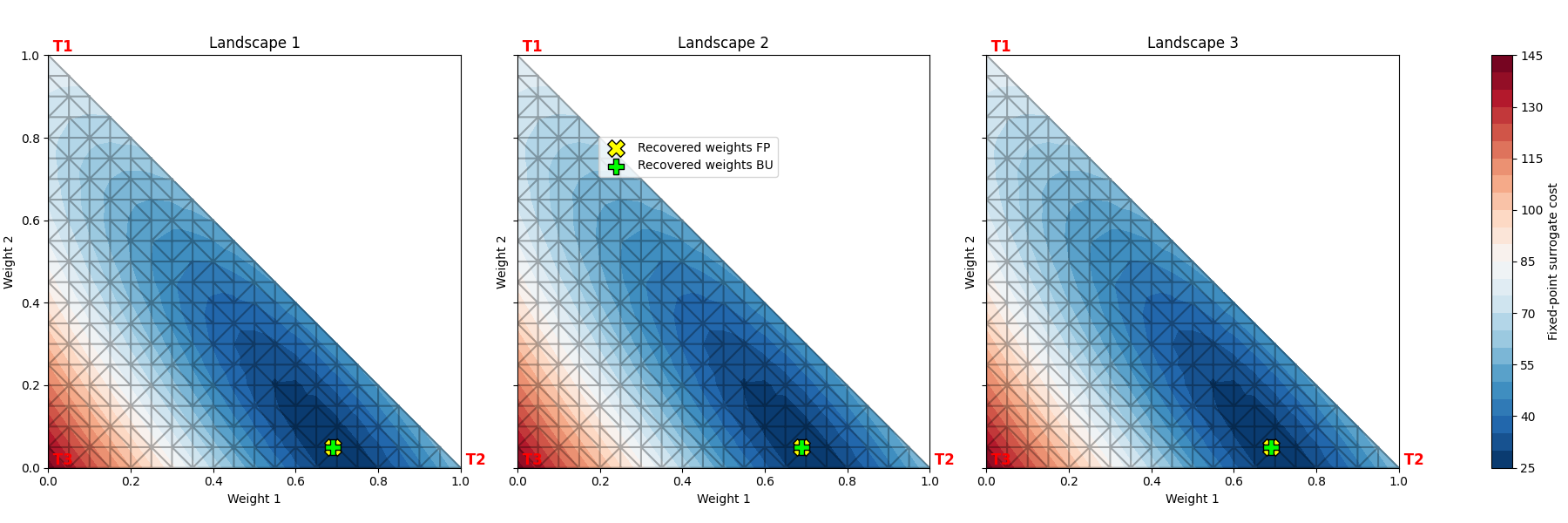}
        \caption{\small{Outside Barycenter Space - FP landscape.}}
        \label{fig:outside_fp}
    \end{subfigure}
    \begin{subfigure}[b]{0.49\linewidth}
        \centering
        \includegraphics[width=\linewidth]{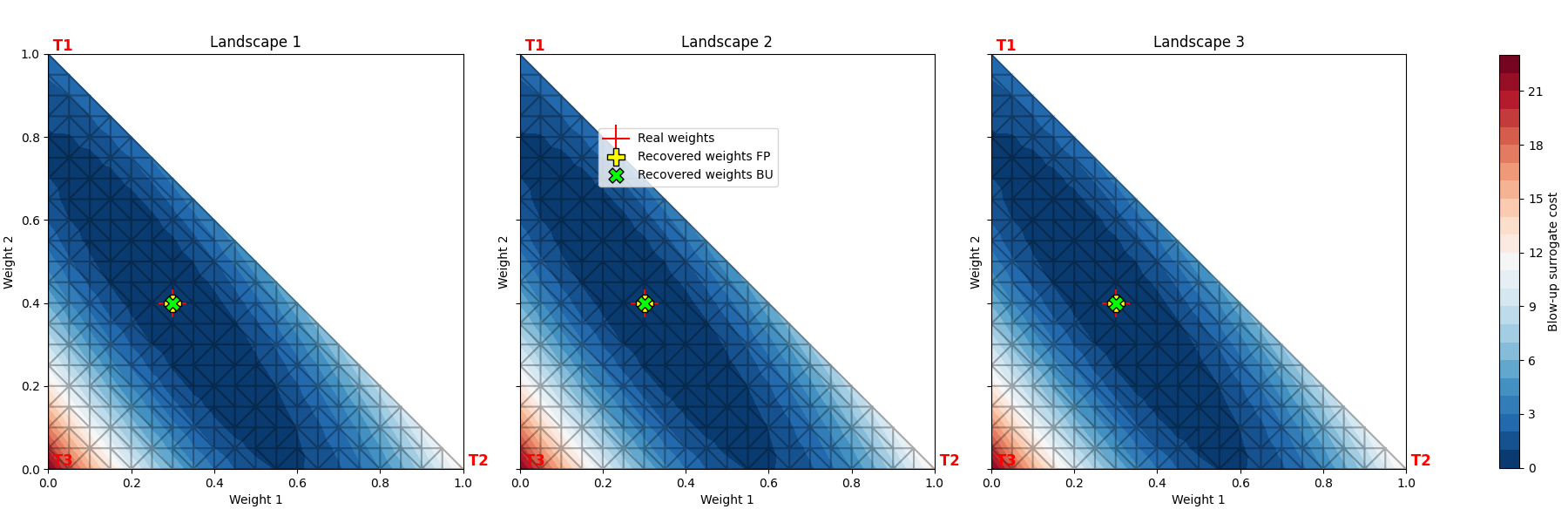}
        \caption{\small{Inside Barycenter Space -- BU landscape.}}
        \label{fig:inside_bu}
    \end{subfigure}\hfill
    \begin{subfigure}[b]{0.49\linewidth}
        \centering
        \includegraphics[width=\linewidth]{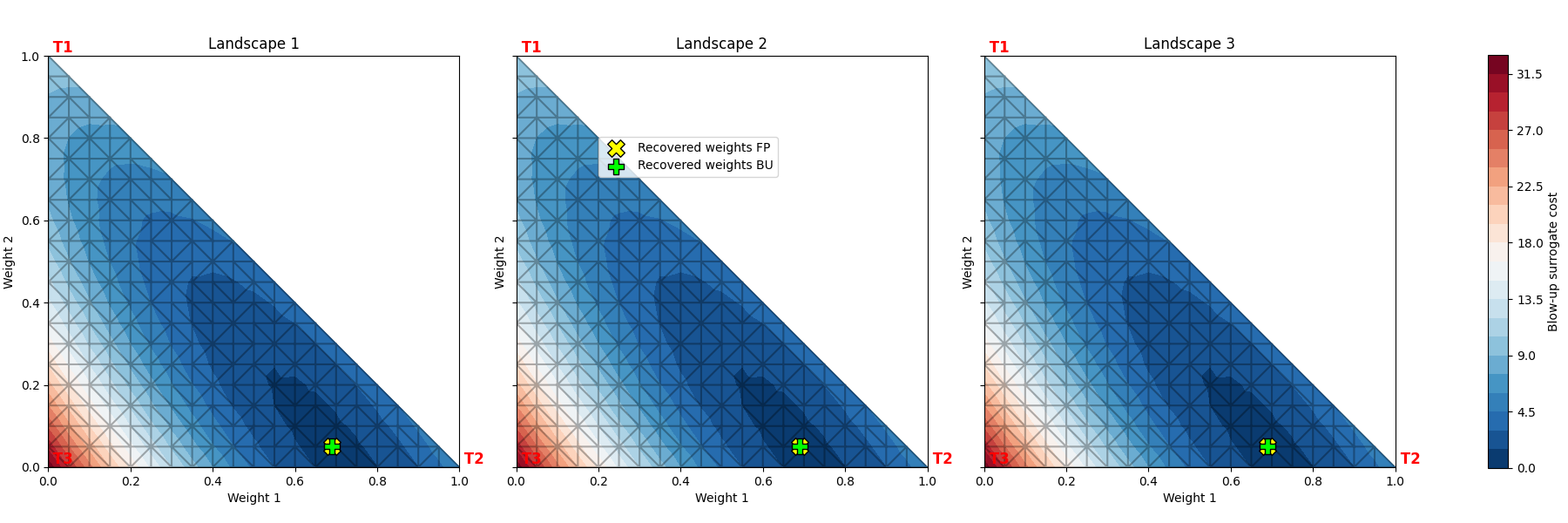}
        \caption{\small{Outside Barycenter Space -- BU landscape.}}
        \label{fig:outside_bu}
    \end{subfigure}

    \caption{\small{Permutation/relabeling invariance examples for fixed-point (FP) analysis Algorithm \ref{alg: analysis} and blow-up (BU) analysis Algorithm \ref{alg: analysis blow up}. We only visualize the first two coordinates of the simplex $\Delta_2$.
    For each subplot (a)--(d), we consider a point cloud target $(\mathbf Y,\q)$ and two permutations (node relabeling) of it. On the left ((a) and (c)), the two input targets are synthetically generated as a GW barycenter (the red cross indicates the true $=(0.3,0.4,0.3)$).  On the right ((b) and (d)), two targets 
    are considered. 
    Top panels (a) and (b) depict the heatmap of the landscape \eqref{eq: landscape FP} when varying $\lambda\in \Delta_2$. 
    Bottom panels (c) and (d) depict the heatmap of the landscape \eqref{eq: landscape BU} when varying weights in $ \Delta_2$. 
    For all subplots (a)--(d), $\lambda$-estimation through Algorithm \ref{alg: analysis} is marked as a yellow cross, and  $\lambda$-estimation via Algorithm \ref{alg: analysis blow up} as a green cross in all plots. In each of the subfigures (a)-(d), the three corresponding landscapes are almost identical, demonstrating the invariance.}}
    \label{fig:permutation_all}
\end{figure}

\section{Wasserstein Barycenters and Gromov-Wasserstein Barycenters}\label{app: GW Was}

In the context of the Wasserstein space, both approaches for computing Wasserstein barycenters, namely the fixed-point scheme and the gradient method, can be interpreted as the same. Indeed, let us consider templates $\{\mu^s\}_{s=1}^S$ in the space $\mathcal{P}_2(\mathbb{R}^d)$ of probability measures on $\mathbb{R}^d$ with finite second moments. Given $\lambda \in \Delta_{S-1}$, the functional $G_\lambda$ in \eqref{eq: barycenter functional} is replaced by
\begin{equation}\label{eq: wass functional}
    \mathcal{P}_2(\mathbb{R}^d) \ni \nu \longmapsto \frac{1}{2} \sum_{s=1}^S \lambda_s \, W^2_2(\nu, \mu^s).
\end{equation}
If all measures are absolutely continuous with respect to the Lebesgue measure on $\mathbb{R}^d$, its Fréchet derivative \cite{frechet1948elements} can be written as
\begin{equation}\label{eq: frechet deriv Wass}
    \sum_{s=1}^S \lambda_s (Id - T^{\nu \to \mu^s}),
\end{equation}
where $Id: \mathbb{R}^d \to \mathbb{R}^d$ is the identity map, and $T^{\nu \to \mu^s}: \mathbb{R}^d \to \mathbb{R}^d$ is the \emph{OT map}\footnote{$T^{\nu \to \mu^s}$ is the \emph{OT map} in the sense that $T^{\nu \to \mu^s}_\# \nu = \mu^s$, and it realizes the Wasserstein distance as
$
W_2^2(\nu, \mu^s) = \int_{\mathbb{R}^d} \|x - T^{\nu \to \mu^s}(x)\|^2 \, d\nu(x)
$.} from $\nu$ to $\mu^s$. 
Thus, if $\nu_\lambda$ is a Wasserstein barycenter, it must satisfy the first-order optimality condition for the functional \eqref{eq: wass functional}\footnote{In other words, $\nu_\lambda$ is a probability measure such that the identity map equals the weighted average of the OT maps from $\nu_\lambda$ to the templates $\{\mu^s\}_{s=1}^S$, i.e.,
$
Id = \sum_{s=1}^S \lambda_s T^{\nu_\lambda \to \mu^s}.
$}, that is,
\begin{equation}\label{eq: wass cp}
\sum_{s=1}^S \lambda_s (Id - T^{\nu_\lambda \to \mu^s}) = 0 
\quad \text{almost everywhere w.r.t. } \nu_\lambda.    
\end{equation}
Moreover, it can be proven that under certain assumptions  critical points (or Karcher means), that is, measures $\nu_\lambda$ satisfying \eqref{eq: wass cp} and Wasserstein barycenters (i.e., minimizer of the functional described by \eqref{eq: wass functional}) coincide (see \cite{werenski2022measure} and \cite[Thm. 3.1.15]{panaretos2020invitation}). 
As a consequence, to find such a Wasserstein barycenter $\nu_\lambda$, one can adopt the following fixed-point scheme based on (Wasserstein) gradient descent on the variance functional \eqref{eq: wass functional}:
\begin{equation*}
\nu_{n+1} = \left(Id - \eta \sum_{s=1}^S \lambda_s (Id - T_n^s)\right)_\# \nu_n,    
\end{equation*}
where $T_n^s$ is the OT map from the current iterate $\nu_n$ to $\mu^s$, and $\eta > 0$ is a suitable step size. This iteration seeks a stationary point of the Wasserstein barycenter functional by successively updating $\nu_n$ via a transport-based averaging of the maps $T_n^s$ (we refer the reader to \cite{alvarez2016fixed}, and also to the recent work \cite{chen2025provably}).
In conclusion, the well-defined geometric structure of the Wasserstein space allows gradient and fixed-point methods to work together, complementing each other, when addressing the Wasserstein synthesis problem.

In the context of this paper, that is, under the Gromov-Wasserstein (GW) structure, the objective functional 
$G_\lambda$  (see \eqref{eq: barycenter functional}) plays the role analogous to that of \eqref{eq: wass functional} in the Wasserstein setting. Its gradient can be computed using the blow-up technique (see Proposition \ref{prop: grad via blow up}), and it serves as the counterpart to the Fréchet derivative in \eqref{eq: frechet deriv Wass}.  In \cite{chowdhury2020gromov}, the authors propose an iterative Fréchet mean algorithm based on the work of Pennec \cite{pennec2006intrinsic}, interpreting it as a gradient descent method for minimizing the Fréchet functional $G_\lambda$ over the space of finite networks. This approach parallels the procedure described in the Wasserstein setting but is adapted to the GW framework, following Pennec’s ideas on averaging in finite-dimensional Riemannian manifolds. However, to the best of our knowledge, there are no theoretical guarantees in the literature that critical points of $G_\lambda$ are minimizers of \eqref{eq: syn gw}, that is, true GW barycenters. Our Lemma \ref{thm: equiv} represents a step in this direction.

\bibliographystyle{siam}
\bibliography{references}

\end{document}